\newlength{\dhatheight}
\newtheorem{theorem}{Theorem}[section]
\newtheorem{proposition}[theorem]{Proposition}
\newtheorem{lemma}[theorem]{Lemma}
\newtheorem{remark}[theorem]{Remark}
\newtheorem{corollary}[theorem]{Corollary}
\newtheorem{definition}[theorem]{Definition}
\newtheorem{notation}[theorem]{Notation}
\newcommand{\rfthmitisqi}{6.1}
\newcommand{\N}{{\mathbb N}}
\newcommand{\R}{{\mathbb R}}
\newcommand{\Z}{{\mathbb Z}}
\newcommand{\ep}{\epsilon}
\newcommand{\dd}{\Delta}
\newcommand{\ra}{\rightarrow}
\newcommand{\ras}{{\stackrel{~~*}{\ra}}}
\newcommand{\ga}{\Gamma}
\newcommand{\xx}{X^{(1)}}
\newcommand{\pp}{{\mathcal{P}}}
\newcommand{\bo}{{\partial}}
\newcommand{\td}{\widetilde d}
\newcommand{\tdd}{\widetilde \dd}
\newcommand{\nn}{\N[\frac{1}{4}]}
\newcommand{\ps}{\Psi}
\newcommand{\ph}{\Sigma}
\newcommand{\tht}{\Theta}
\newcommand{\vt}{\vartheta}
\newcommand{\slex}{<_{sl}}
\newcommand{\up}{\Upsilon}
\newcommand{\tp}{\widetilde \phi}
\newcommand{\tth}{\widetilde \theta}
\newcommand{\dmi}{diameter inequality}
\newcommand{\dms}{diameter inequalities}
\newcommand{\dmc}{diameter}
\newcommand{\cfl}{combed filling}
\newcommand{\tfi}{tame filling inequality}
\newcommand{\tfs}{tame filling inequalities}
\newcommand{\dia}{diagrammatic}
\newcommand{\vky}{{boundary 1-combing}}
\newcommand{\cnf}{{combed ${\mathcal N}$-filling}}
\newcommand{\ehy}{{edge 1-combing}}
\newcommand{\ehs}{{edge 1-combings}}
\newcommand{\edg}{{${\mathcal N}$-diagram}}
\newcommand{\dhy}{{circular 1-combing}}
\newcommand{\dhs}{{circular 1-combings}}
\newcommand{\scf}{{$S^1$-combed filling}}
\newcommand{\cc}{{\mathcal{N}}}
\newcommand{\cld}{{\mathcal{D}}}
\newcommand{\cle}{{\mathcal{E}}}
\newcommand{\clf}{{\mathcal{F}}}
\newcommand\hc{{f}}
\newcommand\he{{\hat e}}
\newcommand\te{{\tilde e}}
\newcommand\pe{{e^{\prime\prime}}}
\newcommand\hhe{{\widetilde e}}
\newcommand\hhr{{\widetilde r}}
\newcommand\hhp{{\widetilde p}}
\newcommand{\stkbl}{{stackable}}
\newcommand{\fstkbl}{{stackable}}
\newcommand{\astkbl}{{algorithmically stackable}}
\newcommand{\afstkbl}{{algorithmically stackable}}
\newcommand{\prs}{{\stackrel{~~p*}{\ra}}}
\newcommand{\mui}{\mu^i}
\newcommand{\mue}{\mu^e}
\newcommand{\tj}{\widetilde j}
\newcommand\ves{{\vec E_{r}}}  
\newcommand\dgd{{\vec E_d}}  
\newcommand{\rcnf}{{recursive combed $\cc$-filling}}
\newcommand{\rcf}{{recursive combed  filling}}
\newcommand{\edi}{{ediam}}
\newcommand{\idi}{{idiam}}
\newcommand{\maxr}{\zeta}
\newcommand{\kti}{k_\cc^i}
\newcommand{\kxi}{k_r^i}
\newcommand{\kte}{k_\cc^e}
\newcommand{\kxe}{k_r^e}
\newcommand{\alg}{S_\ff}
\newcommand{\tc}{z}
\newcommand{\ega}{e_{g,a}}
\newcommand{\ff}{\Phi}
\newcommand{\ttt}{{\mathcal T}}
\newcommand{\vece}{{\vec E}}
\newcommand{\lbl}{{\mathsf{label}}}
\newcommand{\rep}{\chi}
\newcommand{\path}{{\mathsf{path}}}
\newcommand{\init}{{\mathsf{i}}}
\newcommand{\term}{{\mathsf{t}}}
\newcommand{\skz}{^{(0)}}
\newcommand{\sko}{^{(1)}}
\newcommand{\oc}{1-combing}
\newcommand{\cay}{X}
\newcommand{\cdd}{coarse distance}
\newcommand{\tff}{tame filling function}
\newcommand{\tffs}{tame filling functions}
\newcommand{\dff}{diameter filling function}
\newcommand{\dffs}{diameter filling functions}
\newcommand{\tcf}{radial tame combing function}
\newcommand{\tcfs}{radial tame combing functions}
\newcommand{\idmap}{id_{[0,1]}}
\begin{document}
\title[Tame filling invariants for groups]
{Tame filling invariants for groups}

\author[M.~Brittenham]{Mark Brittenham}
\address{Department of Mathematics\\
        University of Nebraska\\
         Lincoln NE 68588-0130, USA}
\email{mbrittenham2@math.unl.edu}

\author[S.~Hermiller]{Susan Hermiller}
\address{Department of Mathematics\\
        University of Nebraska\\
         Lincoln NE 68588-0130, USA}
\email{smh@math.unl.edu}
\thanks{2010 {\em Mathematics Subject Classification}. 20F65; 20F06, 20F69}

\begin{abstract}

A new pair of asymptotic invariants 
for finitely presented groups, called intrinsic and
extrinsic \tffs, are introduced.
These 
filling functions are quasi-isometry invariants
that strengthen the
notions of intrinsic and extrinsic diameter functions for
finitely presented groups. 
We show that the existence of a (finite-valued)
\tff\ implies that the group is
tame combable.
Bounds on both intrinsic and extrinsic
\tffs\ are discussed for stackable groups,
including groups with a finite complete
rewriting system,
Thompson's group $F$, and almost convex groups.
\end{abstract}

\maketitle



\section{Introduction}\label{sec:intro}


In geometric group theory, many asymptotic
invariants associated to a
group $G$ with a finite presentation
$\pp=\langle A \mid R\rangle$ have been defined
using properties of van Kampen diagrams over
this presentation.  Collectively, these are
referred to as filling invariants; an
exposition of many of these is given by
Riley in~\cite[Chapter II]{riley}.  
One of the most well-studied
filling functions is the isodiametric, or intrinsic
diameter, function for $G$.  
The adjective ``intrinsic''
refers to the fact that the distances are 
measured using the path metric
$d_\dd$ in 
van Kampen diagrams $\dd$;
measuring distance using the path metric
$d_\cay$ in the 1-skeleton $\cay\sko$
of the Cayley complex $\cay=\cay(G,\pp)$, instead, gives
an ``extrinsic'' property, and
in~\cite{bridsonriley}, 
Bridson and Riley defined and studied 
properties of extrinsic
diameter functions.
In this paper we define two new filling invariants
that refine these diameter filling functions.

In order to accomplish this, in a 
2-dimensional
van Kampen diagram $\dd$ or 
Cayley complex $X$,
we consider ``distance'' to 2-cells 
as well as within 1-skeleta.  Since the
path metric may not extend to a metric on 2-complex,
given a combinatorial 2-complex $Y$ with a
basepoint vertex $y$, and any point $p \in Y$, we use
the {\em coarse distance}~\label{def:coarsedist}
$\td_Y(y,p)$, defined as follows.  
Let $\td_Y(y,p):=d_Y(y,p)$ be the path metric distance
from $y$ to $p$ in $Y\sko$ if $p$ is a vertex; 
if $p$ is 
in the interior 
of an edge $e$ of $Y$ let
$\td_Y(y,p) := \min\{\td_Y(y,v) \mid v \in \bo(e)\} + \frac{1}{2}$
(the path metric distance from $y$ to the midpoint of $e$);
and if $p$ is in the interior of a 2-cell $\sigma$ of $Y$, then 
let $\td_Y(y,p) := 
\max\{\td_Y(y,q) \mid q \in \bo(\sigma) \setminus Y\skz \}-\frac{1}{4}$.
In order to measure extrinsic coarse
distance in any van Kampen diagram $\dd$ over $\pp$
with basepoint $*$, we apply the unique cellular
map $\pi_\dd:\dd \ra X$ such that $\pi_\dd(*)=\ep$
is the vertex of $X$ labeled by the identity of $G$
and $\pi_\dd$ maps $n$-cells to $n$-cells preserving
edge and 2-cell boundary labels and orientations,
and use the coarse distance $\td_X$ in $X$.

We also use 1-combings
(developed in~\cite{mihaliktschantz}) of these 2-complexes; that is,
given a subcomplex $Z$ of $Y\sko$, a {\em \oc}\label{def:1comb}
of the pair $(Y,Z)$ at a basepoint $y_0 \in Y$ is a continuous function
$\ps:Z \times [0,1] \ra Y$ satisfying:
\begin{itemize} 
\item[(C1)] $\ps(p,0)=y_0$ and 
$\ps(p,1)=p$ for all $p \in Z$,
\item[(C2)] if $y_0 \in Z$ then $\ps(y_0,t)=y_0$
for all $t \in [0,1]$, 
and
\item[(C3)] whenever $p \in Z\skz$, 
then $\ps(p,t) \in Y\sko$ for all $t \in [0,1]$.
\end{itemize}
(Several {\oc}s for van Kampen diagrams are illustrated
in Figure~\ref{fig:seashellcomb}.)
Thus a \oc\ is a continuous choice of paths
in $Y$ from $y_0$ to the points of $Z$, along which
we will measure the tameness of the complex $Y$.

A continuous function $\psi:Z' \times [0,1] \ra Y$, where
$Z'$ is any 1-complex, is {\em $f$-tame}~\label{def:ftame}
with respect to a nondecreasing function $f:\nn \ra \nn$
if for all $p \in Z'$ and $0 \le s < t \le 1$,
we have 
\[ 
\td_Y(y,\psi(p,s)) \le f(\td_Y(y,\psi(p,t)))~.
\]

\begin{definition}\label{def:tfi}
A nondecreasing function $f:\nn \ra \nn$ is an
{\em intrinsic}
[respectively, {\em extrinsic}] {\em \tff} for a
group $G$ over a finite
presentation $\pp=\langle A \mid R\rangle$ if 
for all words $w \in A^*$ that represent the identity in $G$,
there exists a van Kampen diagram $\dd$ for $w$ over $\pp$, 
with basepoint $*$, 
and a \oc\ 
$\ps:\bo \dd \times [0,1] \ra \dd$ at $*$ such that 
the function
$\ps$ [respectively, $\pi_\dd \circ \ps$]
is $f$-tame.
\end{definition} 

Viewing the unit interval as a unit of time
and using $\psi$ to denote either $\ps$ or $\pi_\dd \circ \ps$,
this property says that 
if at a time $s$ the 
path $\psi(p,\cdot)$
has reached a \cdd\ greater
than $f(n)$
from the basepoint, then at all later times $t>s$
the path must remain further than $n$ from the basepoint.
Essentially, the \tff\  bounds the extent
to which a 1-combing of $\dd$ 
from the basepoint to the boundary must 
go outward steadily, rather than returning to 
$n$-cells in $\dd$ (with $n \le 2$) 
that are significantly closer to the basepoint.

In Section~\ref{sec:relationships}, we discuss 
relationships between tame and diameter filling
functions, and
show in Proposition~\ref{prop:itimpliesid}
that an intrinsic or
extrinsic \tff\ with respect to a function $f$ 
implies that the function $n \mapsto \lceil f(n) \rceil$
is an upper bound for the
intrinsic or extrinsic diameter function (respectively).

Another motivation for the definition 
of \tffs\ is to elucidate the close relationship of
the concept of tame combability, as defined by Mihalik
and Tschantz~\cite{mihaliktschantz} (see p.~\pageref{def:tamecomb}
for details),
as well as associated {\tcf}s
advanced by Hermiller and Meier~\cite{hmeiermeastame}
(defined on p.~\pageref{def:tcf}),
with more well-studied diameter filling functions.
In Section~\ref{sec:relax}, we show 
an equivalence between
 tame combing functions and extrinsic filling
invariants.

\smallskip

\noindent{\bf Corollary~\ref{cor:etistc}.} {\em
Let $G$ be a finitely presented group.
Up to Lipschitz equivalence of nondecreasing functions,
the function $f$ is an extrinsic \tff\ for $G$
 if and only if $f$ is a \tcf\ for $G$.
}

\smallskip

\noindent That is, Corollary~\ref{cor:etistc}
and Proposition~\ref{prop:itimpliesid} together 
show that a \tcf\ is an upper bound for
the extrinsic diameter function.

In contrast to the definition
of \dffs\ (see Definition~\ref{def:diamfcn} for details),
\tffs\ do not depend on
the length $l(w)$ of the word $w$.  Indeed, 
in Definition~\ref{def:tfi} 
the property that
a \oc\ path $\ps(p,\cdot)$ cannot return to a distance
less than $n$ from the basepoint 
after it has reached a distance greater than $f(n)$ is 
uniform for all reduced words over $A$ representing $\ep$.
As a consequence, 
although every finitely presented
group admits well-defined intrinsic and extrinsic 
diameter functions,
it is not clear whether every pair
$(G,\pp)$ admits a well-defined
(i.e.~finite-valued) intrinsic or extrinsic \tff.
In Section~\ref{sec:relax}, we show that
the existence of a well-defined \tff\ 
implies tame combability.

\smallskip

\noindent{\bf Corollary~\ref{cor:tamecomb}.} {\em
If $G$ has a well-defined extrinsic
\tff\ over some finite
presentation, then $G$ is tame combable.
}

\smallskip

A long-standing conjecture of
Tschantz~\cite{tschantz} states
that there is a finitely presented group that is
not tame combable, and as a result, 
that the converse of Proposition~\ref{prop:itimpliesid} fails
in the extrinsic case and the extrinsic \tff\ is
a strict upper bound for
(or a stronger invariant than) the extrinsic
diameter function.
That is, Tschantz's conjecture implies the existence of
a finitely presented group which admits
a finite-valued extrinsic diameter function $f$,
but which does not
have a finite-valued extrinsic \tff, and 
hence does not have
an extrinsic \tff\ Lipschitz equivalent to $f$.

While a \tcf\ is an extrinsic property,
Corollary~\ref{cor:etistc} also shows 
that an intrinsic \tff\ can be interpreted as the
intrinsic analog of a \tcf.
The fundamental differences between intrinsic and
extrinsic properties 
arising in Section~\ref{sec:relax}
all stem from the fact that 
gluing van Kampen diagrams along their boundaries
preserves extrinsic distances, but not necessarily
intrinsic distances.  

In Section~\ref{sec:examples} we discuss \tffs\ for
several  large classes of groups.
We begin in
 Section~\ref{subsec:stkbl}
by considering stackable groups, 
defined by
the present authors in~\cite{britherm}.  Stackability is a
topological property of the Cayley graph that 
holds for almost convex groups and
groups with finite complete rewriting systems
(and hence holds for all fundamental groups
of 3-manifolds with a uniform geometry), and that gives
a uniform model for the inductive procedures
to build van Kampen diagrams 
in these groups.  The procedure
is an algorithm in the case that the group
is algorithmically \stkbl. 
(See p.~\pageref{def:stkbl} for these definitions.)
This procedure naturally leads to a method
of constructing 1-combings in these van Kampen
diagrams, which we use to obtain the following.

\smallskip

\noindent{\bf Theorem~\ref{thm:stkblhastff}.}  {\em
If $G$ is a \fstkbl\ group,  
then $G$ admits well-defined intrinsic and extrinsic \tffs.
}

\smallskip

\noindent{\bf Theorem~\ref{thm:astktf}.}  {\em
If $G$ is an \afstkbl\ group, then $G$ has
recursive intrinsic and extrinsic \tffs.
}

\smallskip

This leads us to another motivation for 
studying \tfs, namely to give information leading toward 
answering the open
question of whether there exists a finitely presented
group which is not \stkbl.
An immediate consequence of Theorem~\ref{thm:stkblhastff}
and Corollary~\ref{cor:tamecomb} is that every \fstkbl\ group
satisfies the quasi-isometry invariant 
tame combable property.
If Tschantz's
conjecture~\cite{tschantz} above 
of the existence of a finitely presented group that
is not tame combable is true, 
such a group $G$ would also
not be \stkbl\ with
respect to any finite generating set.

In the next three subsections
of Section~\ref{sec:examples} we compute 
more detailed bounds on 
tame filling invariants for several classes of
\fstkbl\ groups.
In Section~\ref{subsec:rs} we consider groups
that can be presented by rewriting systems.  
In this case we use the results of Section~\ref{subsec:stkbl}
to obtain \tffs\ in terms of the string growth
complexity function $\gamma:\N \ra \N$, where
$\gamma(n)$ is the maximal word length that can
be achieved after finitely many rewritings of
a word of length up to $n$.

\smallskip

\noindent{\bf Proposition~\ref{prop:rsgrowth}.} {\em
Let $G$ be a group with a finite complete
rewriting system. Let
$\gamma$ be the
string growth complexity function and let $\maxr$ denote the length of 
the longest rewriting rule.
Then the 
function $n \mapsto \gamma(\lceil n \rceil +\maxr+2)+1$
is both an intrinsic and an extrinsic \tff\ for $G$.
}

\smallskip

\noindent This result has 
potential for application in 
searching for
finite complete rewriting systems for groups.
A choice of partial ordering used to determine
the termination property of a rewriting system implies an
upper bound on the string growth complexity function.
Then given a lower bound on the intrinsic or extrinsic
\tfs\ or \dms, this corollary
can be used to eliminate partial orderings before
attempting to use them (e.g., via the
Knuth-Bendix algorithm) to construct a rewriting system.

We note in Section~\ref{subsec:rs}
that the iterated Baumslag-Solitar groups $G_k$
are examples of groups admitting recursive intrinsic
and extrinsic \tffs.  However, applying the lower bound of
Gersten~\cite{gerstenexpid} on their diameter functions,
for each natural number $k>2$
the group $G_k$ does not admit intrinsic or extrinsic
\tffs\ with respect to
a $(k-2)$-fold tower of exponentials.

Section~\ref{subsec:finite} contains a proof
that all finite groups, with respect to
all finite presentations, have both
intrinsic and extrinsic \tffs\ that are constant
functions.

In Section~\ref{subsec:linear} we consider 
three examples of \stkbl\ groups for which a linear
\tcf\ was known, and analyze the \stkbl\ structure
to obtain bounds on intrinsic \tffs\ for these examples.
The first of these is
Thompson's group $F$; i.e.,
the group of orientation-preserving piecewise linear
automorphisms of the unit interval for which all linear
slopes are powers of 2, and all breakpoints lie in the
the 2-adic numbers. 
Thompson's group $F$ has been the focus of considerable
research in recent years, and yet the questions of
whether $F$ is automatic or has a finite complete rewriting system  
are open (see the problem list at~\cite{thompsonpbms}).
In~\cite{chst} Cleary, Hermiller, Stein, and Taback
show that $F$ is \stkbl, and also
(after combining their result
with Corollary~\ref{cor:etistc}) that $F$ admits a
linear extrinsic \tff.  In Section~\ref{subsec:linear}
we show that this group also 
admits a linear intrinsic \tff, thus strengthening
the result of Guba~\cite[Corollary 1]{guba} that $F$ has a linear
intrinsic diameter function.

We also show in Section~\ref{subsec:linear} 
that the Baumslag-Solitar group 
$BS(1,p)$ with $p \ge 3$ admits an intrinsic \tff\ Lipschitz
equivalent to the exponential function $n \mapsto p^n$, 
using the linear extrinsic \tff\ for these groups
shown in~\cite{chst}.
Next we consider Cannon's almost convex groups~\cite{cannon} 
(see Definition~\ref{def:ac}), which include
all word hyperbolic groups and  
cocompact discrete groups of isometries of Euclidean
space (with respect to every generating set)~\cite{cannon},
as well as all shortlex automatic groups.
Building upon the characterization
of almost convexity by \tcfs\ 
in~\cite{hmeiermeastame}, 
we show that
almost convexity is equivalent to
conditions on \tffs.

\smallskip

\noindent{\bf Theorem~\ref{thm:ac}.}  {\em
Let $G$ be a group with finite generating set $A$, and
let $\iota: \nn \ra \nn$ denote the identity
function.  The following
are equivalent:
\begin{enumerate}
\item The pair $(G,A)$ is almost convex.
\item $\iota$ is an intrinsic \tff\ for $G$ over 
a finite
presentation $\pp=\langle A \mid R \rangle$.
\item $\iota$ is an extrinsic \tff\ for $G$ over 
a finite
presentation $\pp=\langle A \mid R \rangle$.
\end{enumerate}
}

\smallskip

Our last example, in Section~\ref{subsec:combable},
is a class of combable groups.  

\smallskip

\noindent{\bf Corollary~\ref{cor:combable}.}  {\em 
If a finitely generated group $G$ admits a 
quasi-geodesic language of normal forms
that label simple paths in the Cayley graph and that
satisfy a $K$-fellow traveler property, then $G$ admits linear
intrinsic and extrinsic \tffs.
}

\smallskip

\noindent In particular,
all automatic groups
over a prefix-closed language of normal forms
satisfy the hypotheses of Corollary~\ref{cor:combable}.
This result strengthens that of Gersten~\cite{gersten},
that combable groups have a linear intrinsic
diameter function.

Finally, in Section~\ref{sec:qiinv}, we prove that \tffs\ are
quasi-isometry invariants up to Lipschitz equivalence, 
in Theorem~\rfthmitisqi.


\section{Notation} \label{sec:notation}


Throughout this paper, let $G$ be a group
with a finite {\em symmetric} presentation
$\pp = \langle A \mid R \rangle$; that is, 
such that the generating set $A$ is closed under inversion,
and the set $R$ of defining relations is closed under
inversion and cyclic conjugation.
We will also assume that for each $a \in A$,
the element of $G$ represented by $a$ is not 
the identity $\ep$ of $G$.

Let $\rep:A^* \ra G$ be the monoid homomorphism
mapping generators to their representatives in $G$.
A set of {\em normal forms} is a subset $\cc \subset A^*$
for which the restriction of $\rep$ to $\cc$ is
a bijection. 
We will also assume that every set of normal forms in this 
paper contains the empty word.
Write $y_g$ for the normal form of the element $g$ of $G$,
and write $y_w$ for the normal form of $\rep(w)$.

For a word $w \in A^*$, we write $w^{-1}$ for the 
formal inverse of $w$ in $A^*$, and let
$l(w)$ denote the length of the word $w$. 
Let 1 denote the empty word in $A^*$.
For words $v,w \in A^*$, we write $v=w$ if $v$
and $w$ are the same word in $A^*$, and write $v=_G w$ if
$\rep(v)=\rep(w)$ are the same element of $G$.

Let $X$ be the Cayley 2-complex corresponding to this presentation,
whose 1-skeleton $\ga=X\sko$ is the Cayley graph of $G$ with
respect to $A$.  Denote the path metric on $\xx$ by $d_X$;
for $w \in A^*$,
$d_X(\ep,w)$ then denotes path distance in $\xx$ from
the identity $\ep$ of $G$ to the element of $G$
represented by the word $w$.
For all $g \in G$ and $a \in A$, let $\ega$
denote the directed edge in $\xx$ labeled $a$ from $g$ to $ga$.
By usual convention, both directed edges $\ega$ and $e_{ga,a^{-1}}$
have the same underlying undirected CW complex
edge in $\xx$ between the vertices labeled $g$ and $ga$.

For an arbitrary word $w$ in $A^*$
that represents the
trivial element $\ep$ of $G$, there is a {\em van Kampen
diagram} $\dd$ for $w$ with respect to $\pp$.  
That is, $\dd$ is a finite,
planar, contractible combinatorial 2-complex with 
edges directed and
labeled by elements of $A$, satisfying the
properties that the boundary of 
$\dd$ is an edge path labeled by the
word $w$ starting at a basepoint 
vertex $*$ and
reading counterclockwise, and every 2-cell in $\dd$
has boundary labeled by an element of $R$.

Note that although the definition in the previous
paragraph is standard, it involves a slight abuse of notation,
in that the 2-cells of a van Kampen diagram
are polygons whose boundaries are labeled by words in $A^*$,
rather than elements of a (free) group.  We will
also consider the set $R$ of defining relators
as a finite subset of $A^* \setminus \{1\}$, where 1 is
the empty word.
We do not assume that every defining relator
is freely reduced, but the freely reduced representative
of every defining relator, except 1, must also be in $R$.

Recall that  $\pi_\dd:\dd \ra X$
denotes the cellular map such that $\pi_\dd(*)=\ep$ and
$\pi_\dd$ maps $n$-cells to $n$-cells preserving edge
(and 2-cell boundary) labels and orientations.
A word $w \in A^*$ 
is called a {\em simple} word if $w$ labels 
a simple path in the corresponding Cayley graph $\xx$; that is,
the path does not repeat any vertices or edges.
Since a path labeled by a simple word $w$
in a van Kampen diagram $\dd$ maps via $\pi_\dd$ to 
a simple path in $X$,
the path in $\dd$ must also be simple.
Simple words are a useful ingredient for gluing 
van Kampen diagrams together; given two 
planar diagrams with simple boundary subpaths
sharing a common label but
in reversed directions, the diagrams can be
glued along the subpaths to construct another
planar diagram.

In general, there may be many different van 
Kampen diagrams for the word $w$.  Also,
we do not assume that van Kampen diagrams
in this paper are reduced; that is, we allow adjacent
2-cells in $\dd$ to be labeled by the same relator with
opposite orientations.

If $Y$ is any 2-complex, 
let $E_Y$, $\vece_Y$, and $\vec P_Y$ denote
the sets of  undirected edges, directed edges, and directed paths
in $Y$, respectively.  
Let $\init=\init_Y,\term=\term_Y:\vec P_Y \ra Y\skz$ 
map paths to their initial
and terminal vertices, respectively. 
If $Y$ is either the Cayley complex $X$ 
or the Cayley graph $\ga=X\sko$, define

$\path_Y:G \times A^* \ra \vec P_Y$ by $\path_Y(g,w):=$ the
  path in $Y$ starting at $g$ and labeled~by~$w$.

\noindent If $Y=\dd$ is a van Kampen diagram, define

$\path_\dd:A^* \times A^* \ra \vec P$ by $\path_\dd(v,w):=$ the
  counterclockwise path in $\bo\dd$ labeled by  $w$, 

\hspace{.1in}  that starts at the end of
  the counterclockwise path along $\bo\dd$ from $*$ labeled by $v$.

\noindent In both cases define

$\lbl=\lbl_Y:\vec P_Y \ra A^*$ by $\lbl_Y(p):=$ 
  the word labeling the path $p$.

\noindent For any function $\rho:Y \times [0,1] \ra Z$, the
notation $\rho(p,\cdot)$ denotes the function
$[0,1] \ra Z$ given by $t \mapsto \rho(p,t)$,
and $\rho(\cdot,t)$ denotes the function $Y \ra Z$
defined by $y \mapsto \rho(y,t)$.

Two functions  
$f,g:\nn \ra \nn$ are called {\em Lipschitz
equivalent} if there is a constant $C$
such that $f(n) \le Cg(Cn+C)+C$
and $g(n) \le Cf(Cn+C)+C$ for all $n \in \nn$.

We refer to a \oc\ $\ps:\bo\dd \times [0,1] \ra \dd$ at the basepoint
of a van Kampen diagram as a {\em \vky} of $\dd$ (see 
in Figure~\ref{fig:seashellcomb}).
A collection $\{\dd_w \mid w \in A^*, w=_G\ep\}$ of
van Kampen diagrams for all words representing the trivial element,
where each 
diagram $\dd_w$ has boundary label $w$,
is called a {\em filling} for the
group $G$ over the presentation $\pp$.
A {\em \cfl} for $G$ over $\pp$ is a
collection $\clf=\{(\dd_w,\ps_w) \mid w \in A^*, w=_G \ep\}$
such that each $\dd_w$ is a van Kampen diagram
with boundary word $w$, and $\ps_w:\bo \dd_w \times [0,1] \ra \dd_w$ 
is a 1-combing.

See for example~\cite{bridson} or~\cite{lyndonschupp} 
for expositions of the theory of van Kampen diagrams.



\section{Relationships among filling invariants}\label{sec:relationships}


In this section we show that for finitely
presented groups, \tffs\ give
upper bounds for \dffs.
We begin with a description of the diameter functions
and their motivation of the tame filling invariants
introduced in this paper. 

\begin{definition} \label{def:diamfcn}
The {\em intrinsic}
[respectively, {\em extrinsic}] {\em \dff}
for a group $G$ with finite presentation $\pp$
is the minimal nondecreasing function 
$f:\N \ra \N$ satisfying the property that
for all $w \in A^*$ with $w=_G \ep$,
there exists a van Kampen diagram $\dd$ for $w$ over $\pp$ such that
for all vertices $v$ in $\dd\skz$
we have 
$d_\dd(*,v) \le f(l(w))$
[respectively, $d_X(\ep,\pi_\dd(v)) \le f(l(w))$].
\end{definition}

Since for each $n \in \N$ there are only
finitely many words of length up to $n$,
there is a minimal value for $f(n)$, and so
both diameter functions are well-defined.
See, for example, the exposition
in \cite[Chapter II]{riley} for more details on these diameter 
inequalities and functions.

These \dmc\  functions are rather weak, in that 
although they guarantee that the 
maximum intrinsic (resp. extrinsic)
distance from a vertex
to the basepoint in the diagram $\dd$ 
is at most $f(l(w))$, they
do not measure 
the extent to which vertices at this maximum
distance can occur.
For example, in the extrinsic case it may be possible
to have a chain of contiguous vertices lying at the
maximum distance,  
surrounding a region containing vertices
much closer to the basepoint.
In other words, the \dmc\  functions
do not distinguish how wildly or tamely these
maxima occur in van Kampen diagrams.  
The \tffs\ of Definition~\ref{def:tfi} were
designed to measure this tameness.

\begin{proposition} \label{prop:itimpliesid}
If a nondecreasing function $f:\nn \ra \nn$ is
an intrinsic [resp. extrinsic] \tff\ for a
group $G$ with finite presentation $\pp$,
then the function $\hat f:\N \ra \N$ defined
by $\hat f(n) = \lceil f(n) \rceil$
is an upper bound for the 
intrinsic [resp. extrinsic] diameter function
of the
pair $(G,\pp)$.
\end{proposition}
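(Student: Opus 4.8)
The plan is to show that if $f$ is an intrinsic [resp.\ extrinsic] \tff, then for every word $w$ representing $\ep$ there is a van Kampen diagram $\dd$ for $w$ in which every vertex lies at path-distance at most $\lceil f(l(w))\rceil$ from the basepoint [resp.\ whose $\pi_\dd$-image lies at path-distance at most $\lceil f(l(w))\rceil$ from $\ep$]. Since the intrinsic [resp.\ extrinsic] diameter function is the \emph{minimal} nondecreasing function with this property, $\hat f$ will then be an upper bound for it. So fix $w$ with $w=_G\ep$, and let $\dd$ together with a \vky\ $\ps:\bo\dd\times[0,1]\ra\dd$ be the diagram and 1-combing supplied by Definition~\ref{def:tfi}, so that $\psi:=\ps$ [resp.\ $\psi:=\pi_\dd\circ\ps$] is $f$-tame, with $y$ the relevant basepoint ($*$ in $\dd$, or $\ep$ in $X$).

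The key step is a bound on the coarse distance of boundary \emph{vertices}. Let $v$ be any vertex of $\bo\dd$. Using (C1), $\psi(v,1)=v$ [resp.\ $=\pi_\dd(v)$], and since $v$ is a vertex we have $\td_\dd(*,\psi(v,1))=d_\dd(*,v)$ [resp.\ $\td_X(\ep,\psi(v,1))=d_X(\ep,\pi_\dd(v))$]. Applying $f$-tameness with $s\to 0$ is not directly available since the inequality is stated for $s<t$; instead I take $t=1$ and any $s\in(0,1)$, giving $\td_Y(y,\psi(v,s))\le f(\td_Y(y,\psi(v,1)))=f(d_Y(y,v))$, which bounds intermediate values but not the endpoint itself. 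The endpoint bound I actually want is the trivial one: $d_Y(y,v)\le l(w)$, because $v$ lies on the boundary circuit labeled $w$ of length $l(w)$, hence is joined to $y$ by an edge-path of length at most $l(w)$ in $\dd\sko$ [resp.\ its image in $X\sko$]. This already handles boundary vertices. The real content is extending the bound to \emph{interior} vertices of $\dd$, and for that I use $f$-tameness together with property (C3).

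Here is the mechanism for an interior vertex $u$ of $\dd$. By continuity of $\ps$ and (C1)--(C2), the combing sweeps across $\dd$; since $\dd$ is contractible and $\ps(\cdot,1)=\mathrm{id}_{\bo\dd}$ while $\ps(\cdot,0)$ is constant at $*$, the image of $\ps$ is all of $\dd$, so there exist $p\in\bo\dd$ and $s\in[0,1]$ with $\ps(p,s)=u$ [resp., in the extrinsic case, with $\pi_\dd(\ps(p,s))=\pi_\dd(u)$, using surjectivity of $\pi_\dd$ onto the relevant portion of $X$ together with surjectivity of $\ps$]. Take $t=1$: then $\psi(p,t)$ is the boundary point $p$ [resp.\ $\pi_\dd(p)$], whose coarse distance from $y$ is at most $l(w)$ as above (a boundary vertex, or the coarse distance of a boundary edge point, is $\le l(w)$). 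By $f$-tameness, $\td_Y(y,u)=\td_Y(y,\psi(p,s))\le f(\td_Y(y,\psi(p,1)))\le f(l(w))$ since $f$ is nondecreasing. Finally, because $u$ is a vertex, $\td_Y(y,u)=d_Y(y,u)$ is a nonnegative integer, so $d_Y(y,u)\le\lfloor f(l(w))\rfloor\le\lceil f(l(w))\rceil=\hat f(l(w))$. This holds for every vertex, establishing that $\hat f$ dominates the diameter function on each input, and the proposition follows.

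\textbf{Main obstacle.} The delicate point is justifying that every vertex $u$ of $\dd$ (interior ones especially) actually lies on some combing path $\ps(p,\cdot)$, i.e.\ that $\ps$ is surjective — this is where contractibility of $\dd$, the homotopy-extension from the constant map $\ps(\cdot,0)$ to $\ps(\cdot,1)=\mathrm{id}_{\bo\dd}$, and the degree/covering argument enter — and in the extrinsic case simultaneously handling that $\pi_\dd$ need not be injective, so one must track coarse distance in $X$ rather than in $\dd$. A secondary care point is the boundary estimate $\td_Y(y,\psi(p,1))\le l(w)$: for a vertex this is the length-$\le l(w)$ subpath of $\bo\dd$, but if $\psi(p,1)$ happens to lie in the interior of a boundary edge one gets at most $l(w)-\tfrac12$, which is still $\le l(w)$, so the estimate survives; one should state this uniformly. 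Everything else is routine monotonicity and the integrality of path-distance between vertices.
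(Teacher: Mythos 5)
Your argument is correct and is essentially the paper's proof: continuity of $\ps$ forces every vertex $v$ of $\dd$ to be of the form $\ps(p,s)$, the boundary point $p=\ps(p,1)$ has coarse distance at most $l(w)/2$ from the basepoint (you use the weaker bound $l(w)$, which still suffices), and $f$-tameness with $t=1$ together with monotonicity of $f$ gives the diameter bound. The one quibble is your dispatch of \emph{boundary} vertices by the raw estimate $d_Y(y,v)\le l(w)$, which only yields the conclusion if $l(w)\le\hat f(l(w))$ — not assumed for a general nondecreasing $f$; the paper avoids this by running the same $t=1$ tameness step for boundary vertices too (taking $p=v$), so no separate case is needed.
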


\begin{proof}
We prove this for intrinsic tameness; the extrinsic proof is similar.
Let $w$ be any word over the generating set $A$ from 
$\pp$ representing the trivial element $\ep$
of $G$, and let $\dd$ be a van Kampen diagram  for $w$ with an
$f$-tame 1-combing $\ps$ of $(\dd,\bo\dd)$.
Since the function $\ps$ is continuous,
each vertex $v \in \dd\skz $ satisfies $v=\ps(p,s)$ for some $p \in \bo \dd$
and $s \in [0,1]$.  There is an edge
path along $\bo \dd$ from $*$ to $p$ labeled by at most
half of the word $w$, and so 
$\td_\dd(*,p) \le \frac{l(w)}{2}$.  Using the facts that 
$p=\ps(p,1)$ and $s \le 1$, the $f$-tame 
condition 
implies that $\td_\dd(*,v) \le f(\td_\dd(*,p))$.
Since $f$ is nondecreasing, $\td_\dd(*,v) \le f(\frac{l(w)}{2})$,
as required.
\end{proof}

In \cite{bridsonriley}, Bridson and Riley give an example
of a finitely presented group $G$ whose intrinsic and
extrinsic diameter functions are not Lipschitz
equivalent. 
While the relationship
between \tffs\ remains unresolved in general, in
the following (somewhat technical) lemma 
we give bounds
on their interconnections; 
Lemma~\ref{lem:itversuset} 
will be applied in several examples later in this paper. 

\begin{lemma}\label{lem:itversuset}
Let $G$ be a finitely presented group with 
Cayley complex $X$ 
and \cfl\ $\clf=\{(\dd_w,\ps_w) \mid w \in A^*, w=_G \ep\}$.
Suppose that $j:\N \ra \N$ is a nondecreasing function such that
for 
every vertex $v$ of 
a van Kampen diagram $\dd_w$ in $\clf$,
$d_{\dd_w}(*,v) \le j(d_X(\ep,\pi_{\dd_w}(v)))$, and
let $\tj:\nn \ra \nn$ be
defined by $\tj(n):=j(\lceil n \rceil)+1$.  
\begin{enumerate}
\item If $\pi_{\dd_w} \circ \ps_w$ is $f$-tame for all $w$,
then $\tj \circ f$ is an intrinsic \tff~for~$G$.
\item If $\ps_w$ is $f$-tame for all $w$,
then $f \circ \tj$ is an extrinsic \tff\ for $G$.
\end{enumerate}
\end{lemma}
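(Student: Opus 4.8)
The plan is to compare the coarse distances $\td_{\dd_w}$ and $\td_X$ at arbitrary points of the van Kampen diagram — not just vertices — since both the tameness hypothesis and the conclusion are stated in terms of $\td$. The hypothesis $d_{\dd_w}(*,v)\le j(d_X(\ep,\pi_{\dd_w}(v)))$ is given only at vertices, so the first step is to bootstrap it to a statement about every point $p\in\dd_w$, namely
\[
\td_{\dd_w}(*,p)\ \le\ \tj\bigl(\td_X(\ep,\pi_{\dd_w}(p))\bigr)\ =\ j\bigl(\lceil \td_X(\ep,\pi_{\dd_w}(p))\rceil\bigr)+1.
\]
This is a short case analysis over the three possibilities in the definition of coarse distance (vertex, interior of an edge, interior of a 2-cell). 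For $p$ in the interior of an edge $e$, $\td_{\dd_w}(*,p)=\min_{v\in\bo e}\td_{\dd_w}(*,v)+\tfrac12$; pick a vertex $v$ of $e$ realizing this minimum, apply the vertex hypothesis to $v$, note $\pi_{\dd_w}(v)\in\bo(\pi_{\dd_w}(e))$ so $\td_X(\ep,\pi_{\dd_w}(v))\le \td_X(\ep,\pi_{\dd_w}(p))+\tfrac12\le \lceil\td_X(\ep,\pi_{\dd_w}(p))\rceil + \tfrac12$, and use monotonicity of $j$ together with the fact that $j$ takes integer values (so the half-integer can be absorbed) plus the ``$+1$'' slack. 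For $p$ in the interior of a 2-cell $\sigma$, $\td_{\dd_w}(*,p)=\max\{\td_{\dd_w}(*,q)\mid q\in\bo\sigma\setminus\dd_w\skz\}-\tfrac14$; bound each such $q$ (which lies in the interior of a boundary edge of $\sigma$) by the edge case already handled, observing that $\pi_{\dd_w}(q)$ lies on the boundary of the 2-cell $\pi_{\dd_w}(\sigma)$ in $X$, so $\td_X(\ep,\pi_{\dd_w}(q))\le \td_X(\ep,\pi_{\dd_w}(p))+\tfrac14+\tfrac14$ — again the small additive constants are swallowed by the ceiling and the $+1$. The upshot is the displayed pointwise inequality, valid at every point of $\dd_w$.

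For part (1), suppose $\pi_{\dd_w}\circ\ps_w$ is $f$-tame for all $w$. Fix $w$, fix $p\in\bo\dd_w$ and $0\le s<t\le1$. I want $\td_{\dd_w}(*,\ps_w(p,s))\le (\tj\circ f)(\td_{\dd_w}(*,\ps_w(p,t)))$. Apply the pointwise inequality at the point $\ps_w(p,s)$ to get $\td_{\dd_w}(*,\ps_w(p,s))\le \tj(\td_X(\ep,\pi_{\dd_w}(\ps_w(p,s))))$. Now $f$-tameness of $\pi_{\dd_w}\circ\ps_w$ gives $\td_X(\ep,\pi_{\dd_w}(\ps_w(p,s)))\le f(\td_X(\ep,\pi_{\dd_w}(\ps_w(p,t))))$. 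Finally $\td_X(\ep,\pi_{\dd_w}(\ps_w(p,t)))\le\td_{\dd_w}(*,\ps_w(p,t))$, because the cellular map $\pi_{\dd_w}$ is $1$-Lipschitz on $1$-skeleta and does not increase coarse distance (this last monotonicity of $\td$ under cellular maps is worth recording as a small observation — it follows again by the same three-case induction on the structure of the point). Chaining these three and using that $\tj$ and $f$ are nondecreasing yields the claim, so $\tj\circ f$ is an intrinsic \tff\ for $G$ via the \cfl\ $\clf$.

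For part (2), suppose $\ps_w$ is $f$-tame; I want $\ps_w$'s extrinsic version, i.e. $\pi_{\dd_w}\circ\ps_w$, to be $(f\circ\tj)$-tame. Fix $w,p,s<t$ as before. Start extrinsically: $\td_X(\ep,\pi_{\dd_w}(\ps_w(p,s)))\le\td_{\dd_w}(*,\ps_w(p,s))$ by the cellular-map monotonicity again. Then intrinsic $f$-tameness of $\ps_w$ gives $\td_{\dd_w}(*,\ps_w(p,s))\le f(\td_{\dd_w}(*,\ps_w(p,t)))$. Then the pointwise inequality from the first step, applied at $\ps_w(p,t)$, gives $\td_{\dd_w}(*,\ps_w(p,t))\le\tj(\td_X(\ep,\pi_{\dd_w}(\ps_w(p,t))))$. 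Composing and using monotonicity of $f$ gives $\td_X(\ep,\pi_{\dd_w}(\ps_w(p,s)))\le (f\circ\tj)(\td_X(\ep,\pi_{\dd_w}(\ps_w(p,t))))$, which is exactly $(f\circ\tj)$-tameness of $\pi_{\dd_w}\circ\ps_w$; hence $f\circ\tj$ is an extrinsic \tff\ for $G$.

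The main obstacle is the bookkeeping in the first step: the hypothesis is given at vertices in terms of honest path metrics, but everything downstream needs the coarse distance $\td$ at arbitrary interior points, and one must check that the $\tfrac12$ and $\tfrac14$ fudge factors built into $\td$ are genuinely absorbed by the ceiling $\lceil\cdot\rceil$ and the ``$+1$'' in the definition of $\tj$ — in particular that $j(\lceil n\rceil)+1 \ge j(\lceil m\rceil)$ whenever $m\le n+\tfrac12$, which holds because $\lceil m\rceil \le \lceil n\rceil + 1$ is too weak but $\lceil n+\tfrac12\rceil\le\lceil n\rceil+1$ combined with integrality of $j$ and the explicit $+1$ does the job. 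Once that lemma is in hand, parts (1) and (2) are each a three-line chaining of inequalities, differing only in whether one starts from the intrinsic or the extrinsic side.
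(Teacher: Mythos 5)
Your proposal is correct and follows essentially the same route as the paper: first bootstrap the vertex hypothesis to the pointwise inequality $\td_X(\ep,\pi_{\dd_w}(p)) \le \td_{\dd_w}(*,p) \le \tj(\td_X(\ep,\pi_{\dd_w}(p)))$ at every point of $\dd_w$, then chain it with $f$-tameness and the distance-nonincreasing property of $\pi_{\dd_w}$ in the appropriate order for each part. The only cosmetic difference is in the bookkeeping for the pointwise inequality: the paper handles all three cell types at once by taking the vertex of \emph{maximal} coarse distance in the boundary of the open cell containing $p$ (so that $\td_{\dd_w}(*,p) < \td_{\dd_w}(*,v)+1$ and $\td_X(\ep,\pi_{\dd_w}(v)) \le \lceil \td_X(\ep,\pi_{\dd_w}(p))\rceil$ in one stroke), whereas you work through the edge and 2-cell cases separately with the minimizing/maximizing points from the definition of coarse distance; both verifications are valid.
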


\begin{proof}
We begin by showing that the inequality restriction
for $j$ on vertices holds for the function $\tj$ on all
points in the van Kampen diagrams in $\clf$, using the fact that
coarse distances on edges and 2-cells are closely
linked to those of vertices.
Let $(\dd,\ps)
 \in \clf$
and let $p$ be any point in $\dd$.
Among the vertices
in the boundary of the open cell of $\dd$ containing $p$, 
let $v$ be the vertex whose coarse
distance to the basepoint $*$ is maximal.
Then $\td_{\dd}(*,p) < \td_{\dd}(*,v)+1$.
Moreover, $\pi_{\dd}(v)$ is again a vertex
in the boundary of the open cell of $X$ 
containing $\pi_{\dd}(p)$, and so
$\td_X(\ep,\pi_{\dd}(v)) \le \lceil \td_X(\ep,\pi_{\dd}(p)) \rceil$.
Applying the fact that $j$ is nondecreasing, then 
$\td_{\dd}(*,p) \le \td_{\dd}(*,v)+1
\le j(\td_{X}(\ep,\pi_{\dd}(v)))+1 \le 
j(\lceil \td_X(\ep,\pi_{\dd_w}(p)) \rceil) +1~.$
Hence the second inequality in 
\[ \td_X(\ep,\pi_{\dd}(p)) \le \td_{\dd}(*,p) 
\le \tj(\td_X(\ep,\pi_{\dd}(p))) \hspace{0.8in} (a)
\]
follows. The first inequality is a consequence
of the fact that coarse distance can only be
preserved or decreased by the map $\pi_\dd:\dd \ra X$.

Now suppose that the composition 
$\pi_\dd \circ \ps:\bo\dd \times [0,1] \ra X$
is $f$-tame.
Then for all $p \in \dd$ and for all $0 \le s < t \le 1$, we have
$
\td_{\dd}(*,\ps(p,s)) \le \tj(\td_X(\ep,\pi_{\dd}(\ps(p,s))))
\le \tj(f(\td_X(\ep,\pi_{\dd}(\ps(p,t)))))
\le \tj(f(\td_{\dd}(*,\ps(p,t))))
$
where the first and third inequalities follow from $(a)$ and
the fact that $f$ and $\tj$ are nondecreasing, and
the second also uses $f$-tameness of $\ps$. 
Hence $\ps$ is $(\tj \circ f)$-tame,
completing the proof of (1).
The proof of (2) is similar.
\end{proof}



\section{Tame filling functions and tame combings}\label{sec:relax}


The purpose of this section is twofold.
The main goal is to prove Corollaries~\ref{cor:etistc}
and~\ref{cor:tamecomb},
connecting the concepts of tame combings and \tfs.
Before doing this, we require a pair of results that
provide more tractable methods for constructing combed fillings.
Lemma~\ref{lem:edgeimpliesbdry} will also be applied 
to stackable groups in Section~\ref{subsec:stkbl}, as well as in 
the proof of Theorem~\ref{thm:ac}.  This lemma also
is part of the stronger
Proposition~\ref{prop:htpydomain}, which 
will be essential to the proofs of Corollary~\ref{cor:etistc}
and Theorem~\rfthmitisqi.

The objective of Lemma~\ref{lem:edgeimpliesbdry} is to reduce the 
set of diagrams required to construct a combed filling.
Let $\cc=\{y_g \mid g \in G\} \subset A^*$ be a set of normal
forms (including the empty word) for $G=\langle A \mid R \rangle$ 
that label simple paths in the Cayley complex $X$.
An {\em \edg}\label{defedg} is a van Kampen diagram $\dd$
with boundary label $y_gay_{ga}^{-1}$ for some $g \in G$ and $a \in A$.
An {\em \ehy}\label{defehy} of $\dd$
is a 1-combing 
$\tht$
of the pair $(\dd,\he)$ 
at the basepoint $*$,
where $\he$ is the 1-cell underlying the edge
$\tilde e:=\path_\dd(y_g,a)$
in $\bo \dd$ corresponding to $a$, 
such that the paths 
$\tht(\init(\tilde e),\cdot),\tht(\term(\tilde e),\cdot):[0,1] \ra \dd$
to the endpoints of $\he$ follow the paths
labeled $y_g,y_{ga}$ in $\bo\dd$ from $*$.
(See Figure~\ref{fig:seashellcomb}).
A {\em \cnf}~\label{defcnf} 
is a collection
$\cle=\{(\dd_e,\tht_e) \mid e \in E_X\}$
such that for each $e \in E_X$,  
$\dd_e$ is an \edg\ with  \ehy\ 
$\tht_e$ associated to the elements $g \in G$ and $a \in A$
for one of the directed edges $\ega$ corresponding to $e$,
 and 
the {\oc}s satisfy
the following {\em gluing condition}:
For every pair of edges $e,e' \in E_X$
with a common endpoint $g$, we require that  
$\pi_{\dd_e} \circ \tht_e(\hat g_e,t)=
\pi_{\dd_{e'}} \circ \tht_{e'}(\hat g_{e'},t)$ for all 
$t$ in $[0,1]$, where $\hat g_e$ and $\hat g_{e'}$ are
the vertices of $\he$ in $\dd_e$ and $\hat{e'}$ in $\dd_{e'}$
mapping to the vertex $g$ in $X$, respectively; that is, at these vertices
$\tht_e$ and $\tht_{e'}$
project to the same path, with the same
parametrization, in the Cayley complex $X$.
A \cnf\ is 
{\em geodesic} if all of the words in the
normal form set $\cc$ 
label geodesics in the associated Cayley graph.

In the following we extend  the ``seashell'' 
(``cockleshell'' in \cite[Section~1.3]{riley})
method of constructing
a filling from {\edg}s, to build 
a combed filling from a {\cnf}.

\begin{lemma}\label{lem:edgeimpliesbdry}
A \cnf\ $\cle=\{(\dd_e,\tht_e) \mid e \in E_X\}$ 
for a group $G$ over a finite presentation $\pp$
induces a \cfl\ $\{\dd_w,\ps_w) \mid w=_G \ep\}$, 
such that
if $\pi_{\dd_e} \circ \tht_e$ is $f$-tame for all $e$,
then each $\pi_{\dd_w} \circ \ps_w$ is also $f$-tame.
Moreover in the case
that the \cnf\ is geodesic,
if every \ehy\ $\tht_e$ is $f$-tame, then
every \vky\ $\ps_w$ is also $f$-tame.
\end{lemma}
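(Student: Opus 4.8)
The plan is to build each diagram $\dd_w$ by the ``seashell''/cockleshell method from the given {\edg}s, transport the combings $\tht_e$ onto it, and then read off the two tameness assertions. Fix a word $w=a_1\cdots a_n$ with $w=_G\ep$ and set $g_0:=\ep$, $g_i:=\rep(a_1\cdots a_i)$, so $g_n=\ep$. For each $i$ the directed edge $e_{g_{i-1},a_i}$ of $X$ underlies an undirected edge $e^{(i)}\in E_X$, and I would take $(\dd_i,\tht_i):=(\dd_{e^{(i)}},\tht_{e^{(i)}})$, reflected through the basepoint when the distinguished directed edge of $\dd_{e^{(i)}}$ is $e_{g_i,a_i^{-1}}$ rather than $e_{g_{i-1},a_i}$, so that $\dd_i$ is an \edg\ with counterclockwise boundary word $y_{g_{i-1}}\,a_i\,y_{g_i}^{-1}$. (One checks here that reflecting through the basepoint sends a van Kampen diagram to a van Kampen diagram, using that $R$ is symmetric, by a label- and orientation-preserving cellular isomorphism fixing the basepoint; such a map preserves $\td$, commutes with the projections to $X$, and therefore preserves the \ehy\ property, the gluing condition, and $f$-tameness of $\tht_e$ and of $\pi_{\dd_e}\circ\tht_e$ — so the reflection is harmless.) Since each $y_{g_i}$ is a simple word, the subpaths of $\bo\dd_i$ and of $\bo\dd_{i+1}$ labeled $y_{g_i}$ are simple, so I can inductively glue $\dd_{i+1}$ to $\dd_1\cup\cdots\cup\dd_i$ along the copy of this arc, which after the first $i$ gluings is exactly the terminal $y_{g_i}^{-1}$-portion of the counterclockwise boundary word $a_1\cdots a_i\,y_{g_i}^{-1}$; this yields a van Kampen diagram $\dd_w$ with boundary word $w$. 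Write $\alpha_i\subset\dd_w$ for the glued (simple) arc labeled $y_{g_i}$ from the basepoint $*$ to the vertex $v_i$ obtained by reading $a_1\cdots a_i$ along $\bo\dd_w$, and $\te_i$ for the boundary edge labeled $a_i$; then $v_0=v_n=*$, every $\alpha_i$ starts at $*$, $\bo\dd_w$ is the closed edge path $\te_1\cdots\te_n$ (embedded except that it revisits $*$ whenever some $g_i=\ep$), the basepoints of all the $\dd_i$ are identified with $*$, and consequently $\pi_{\dd_w}$ restricts on each subcomplex $\dd_i$ to $\pi_{\dd_i}$.

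I would then set $\ps_w$ equal to $\tht_i$ on $\te_i\times[0,1]$ (through $\dd_i\subset\dd_w$). The one point requiring care is consistency at a shared vertex $v_i$: the two candidate paths $\tht_i(\term(\te_i),\cdot)$ and $\tht_{i+1}(\init(\te_{i+1}),\cdot)$ both have image in the arc $\alpha_i$, since each follows the relevant $y_{g_i}$-labeled boundary subpath, and by the gluing condition they have the same image under $\pi_{\dd_w}$ with the same parametrization; as $y_{g_i}$ is simple, $\pi_{\dd_w}$ restricts to a homeomorphism on $\alpha_i$, so the two candidates coincide (and when $\alpha_i$ degenerates, i.e. $g_i=\ep$, both are constant at $*$). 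Hence $\ps_w$ is well defined, continuous by the pasting lemma, inherits (C1) and (C3) from the $\tht_i$, and satisfies (C2) because $\te_1$ and $\te_n$ are combed by $\tht_1$ and $\tht_n$ whose paths to the $*$-vertex follow the empty words $y_{g_0},y_{g_n}$. This gives the \cfl. For the extrinsic half, $\pi_{\dd_w}\circ\ps_w$ coincides with $\pi_{\dd_i}\circ\tht_i$ on $\te_i\times[0,1]$, so $f$-tameness of every $\pi_{\dd_e}\circ\tht_e$ at once yields $f$-tameness of every $\pi_{\dd_w}\circ\ps_w$.

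For the geodesic case, assume $\cc$ is geodesic and each $\tht_e$ is $f$-tame. I would reduce to the claim that $\td_{\dd_w}(*,y)=\td_{\dd_i}(*,y)$ for every $y\in\dd_i$: granting this, for $p\in\te_i$ and $0\le s<t\le1$,
\[
\td_{\dd_w}(*,\ps_w(p,s))=\td_{\dd_i}(*,\tht_i(p,s))\le f\bigl(\td_{\dd_i}(*,\tht_i(p,t))\bigr)=f\bigl(\td_{\dd_w}(*,\ps_w(p,t))\bigr),
\]
so $\ps_w$ is $f$-tame. Because $\td$ at a point is determined by the path-metric distances to the vertices of the cell of $\dd_w$ containing it, and that cell lies in the subcomplex $\dd_i$ whenever the point does, the claim reduces to $d_{\dd_i}(*,v)\le d_{\dd_w}(*,v)$ for $v\in\dd_i\skz$ (the reverse inequality being automatic). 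Here geodesicity is used twice. First, every prefix of the geodesic word $y_{g_i}$ is geodesic and $\pi_{\dd_w}$ does not increase distance, so $\alpha_i$ is a geodesic of $\dd_w\sko$ that realizes $d_{\dd_w}(*,-)$ at each of its vertices. Second, the seashell construction shows $\dd_i\cap\dd_j\subseteq\alpha_{i-1}\cup\alpha_i$ for every $j\ne i$ — each later piece $\dd_{i+k}$ is attached along $\alpha_{i+k-1}$, whence an induction on $k$ gives $\dd_{i+k}\cap\dd_i\subseteq\alpha_i$, and symmetrically for the earlier pieces. Now, given a geodesic $p$ of $\dd_w\sko$ from $*$ to $v\in\dd_i\skz$, let $u$ be the last vertex of $p$ lying in $\alpha_{i-1}\cup\alpha_i$ (which exists since $*\in\alpha_{i-1}$). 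The portion of $p$ from $u$ on then stays inside $\dd_i$, while the portion of $p$ from $*$ to $u$ may be replaced by the subarc of $\alpha_{i-1}$ or $\alpha_i$ from $*$ to $u$, of equal length (both are $\dd_w$-geodesics from $*$ to $u$). This produces a path from $*$ to $v$ lying in $\dd_i$ and of length $d_{\dd_w}(*,v)$, giving the claim.

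The main obstacle is not conceptual but bookkeeping: one must verify that the successive gluings remain planar, keep track of which basepoints, vertices $v_i$ and arcs $\alpha_i$ get identified (and that no unwanted identifications occur away from $*$), and confirm that $\dd_i$ meets the rest of $\dd_w$ only along $\alpha_{i-1}\cup\alpha_i$. The single substantive step is the distance equality in the geodesic case, which genuinely uses that the gluing arcs are geodesics of the ambient diagram; by contrast the extrinsic half is essentially formal once the identity $\pi_{\dd_w}|_{\dd_i}=\pi_{\dd_i}$ is noted.
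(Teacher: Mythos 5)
Your proposal is correct and follows essentially the same route as the paper's proof: the seashell gluing of the {\edg}s along the simple normal-form arcs, well-definedness of $\ps_w$ via the gluing condition together with simplicity of the $y_{g_i}$, the identity $\pi_{\dd_w}|_{\dd_i}=\pi_{\dd_i}$ for the extrinsic half, and the geodesic-replacement argument showing the inclusion $\dd_i\hookrightarrow\dd_w$ preserves (coarse) distance in the geodesic case. The only cosmetic difference is that you directly build an equal-length path inside $\dd_i$ where the paper derives a contradiction from a hypothetical strictly shorter path.
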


\begin{proof}
Given a word $w=a_1 \cdots a_n$ with $w=_G \ep$ and
each $a_i \in A$, let $(\dd_i,\tht_i)$ be the
element of $\cle$ corresponding to the 
edge of $X$
with endpoints $a_1 \cdots a_{i-1}$ and
$a_1 \cdots a_i$ and label $a_i$.  
By replacing $\dd_i$ by its mirror image if necessary,
we may assume that $\dd_i$ has boundary labeled by the word
$y_{i-1} a_i y_i$, where $y_i$ is the normal
form in $\cc$ of $a_1 \cdots a_i$. 

We next iteratively build a van Kampen diagram 
$\dd_i'$ for the word 
$y_{\ep}a_1 \cdots a_i y_{i}^{-1}$,
beginning with $\dd_1':=\dd_1$.  For $1<i \le n$, 
the planar diagrams $\dd_{i-1}'$ and $\dd_{i}$
have boundary subpaths
sharing a common label by the simple word $y_{i}$.
The fact that $y_i$  labels
a simple path in $X$
implies that 
each of these boundary paths in $\dd_{i-1}'$,$\dd_{i}$
is an embedding.
These paths are also oriented in the same direction,
and so the diagrams $\dd_{i-1}'$ and $\dd_{i}$ can be
glued, starting at their basepoints and
folding along these subpaths,
to construct the 
planar diagram $\dd_i'$.  
Performing these gluings consecutively for each
$i$ results in a van Kampen diagram $\dd_n'$ with
boundary label $y_{\ep}wy_{w}^{-1}$.
Note that we have allowed the
possibility that some of the boundary edges of $\dd_n'$
may not lie on the boundary of a 2-cell in $\dd_n'$;
some of the words $y_{i-1} a_i y_i$ may freely
reduce to the empty word, and the corresponding
van Kampen diagrams $\dd_i$ may have no 2-cells.
Note also that
the only simple word representing the identity of $G$
is the empty word; that is, $y_\ep =y_{w}=1$.  
Hence $\dd_n'$ is a van Kampen diagram for $w$.

Let $\dd_w:=\dd_n'$, and let
$\alpha:\coprod \dd_i \ra \dd_w$ be the quotient map.
Then each
restriction $\alpha|:\dd_i \ra \dd_w$ is an embedding.
Let $\he_i:=\path_{\dd_i}(y_{i-1},a_i)$ be 
the $a_i$ edge in the boundary path
of $\dd_i$ (and by slight abuse of notation also in the
boundary of $\dd_w$).
In order to build a \vky\ on
$\dd_w$, we note that the \ehs\ 
$\tht_i$ give a continuous function 
$\alpha \circ \coprod \tht_i: 
\coprod \he_i \times [0,1] \ra \dd_w$.
The gluing condition in the definition of \cnf\ implies that 
on the common endpoint 
$v_i$ of the edges $\he_i$
and $\he_{i+1}$ of $\dd_w$, the paths
$\pi_{\dd_i} \circ \tht_i(v_i,\cdot)$ and
$\pi_{\dd_{i+1}} \circ \tht_{i+1}(v_i,\cdot)$
follow the edge path in $X$ labeled $y_i$ with the same parametrization.
Hence the same is true for the functions
$\tht_i(v_i,\cdot)$ and $\tht_{i+1}(v_{i+1},\cdot)$
following the edge paths labeled $y_i$ that are glued by
$\alpha$.
Moreover, if an
$\he_i$ edge and (the reverse of) an $\he_j$ edge are glued
via $\alpha$, the maps $\tht_i$ and $\tht_j$ have been
chosen to be consistent.
Hence the collection of maps $\tht_i$
are consistent on points identified by the
gluing map $\alpha$,
and we obtain
an induced function
$\ps_w:\bo \dd_w \times [0,1] \ra \dd_w$.
Moreover, the function $\ps_w$ satisfies all
of the properties needed for the required \vky\ 
on the diagram $\dd_w$.
Let $\clf=\{(\dd_w,\ps_w) \mid w \in A^*, w =_G \ep\}$ 
be the induced \cfl\ 
from this ``seashell'' procedure.
(See Figure~\ref{fig:seashellcomb}.)
\begin{figure}
\begin{center}
\includegraphics[width=1.6in,height=1.1in]{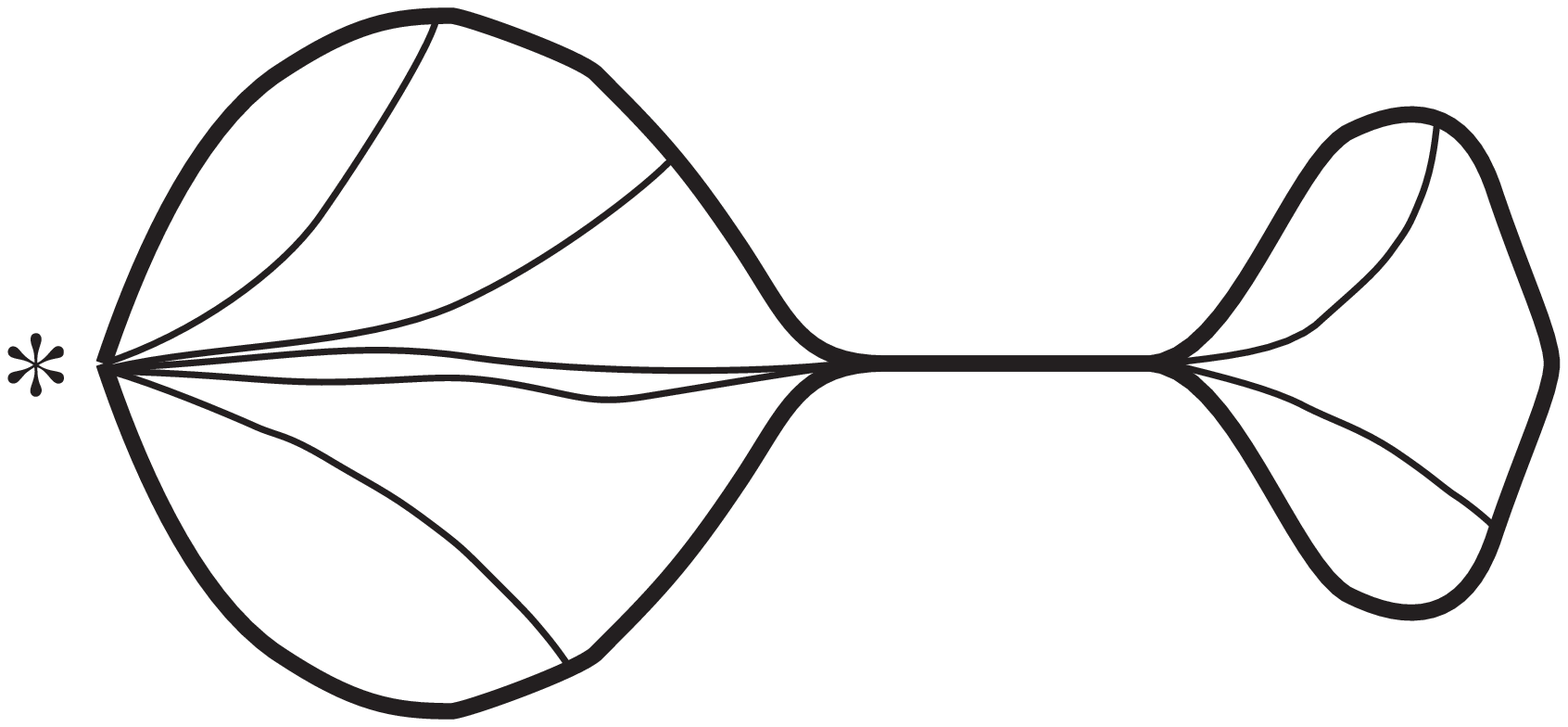}
\hspace{.2in}
\includegraphics[width=1.6in,height=1.1in]{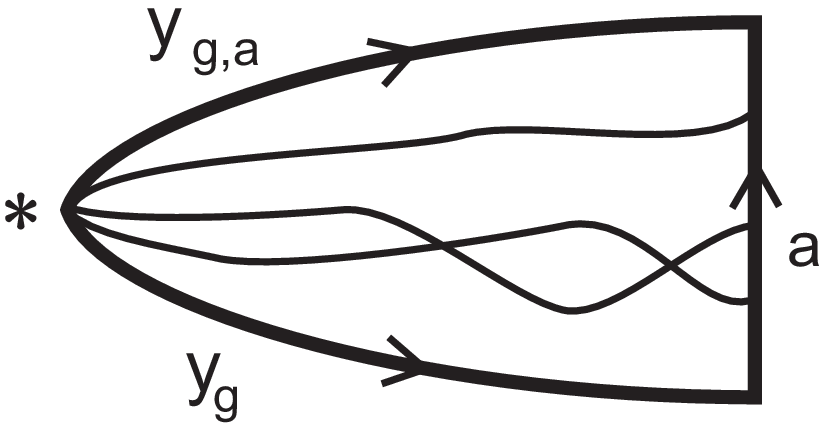}
\hspace{.2in}
\includegraphics[width=2.3in,height=1.5in]{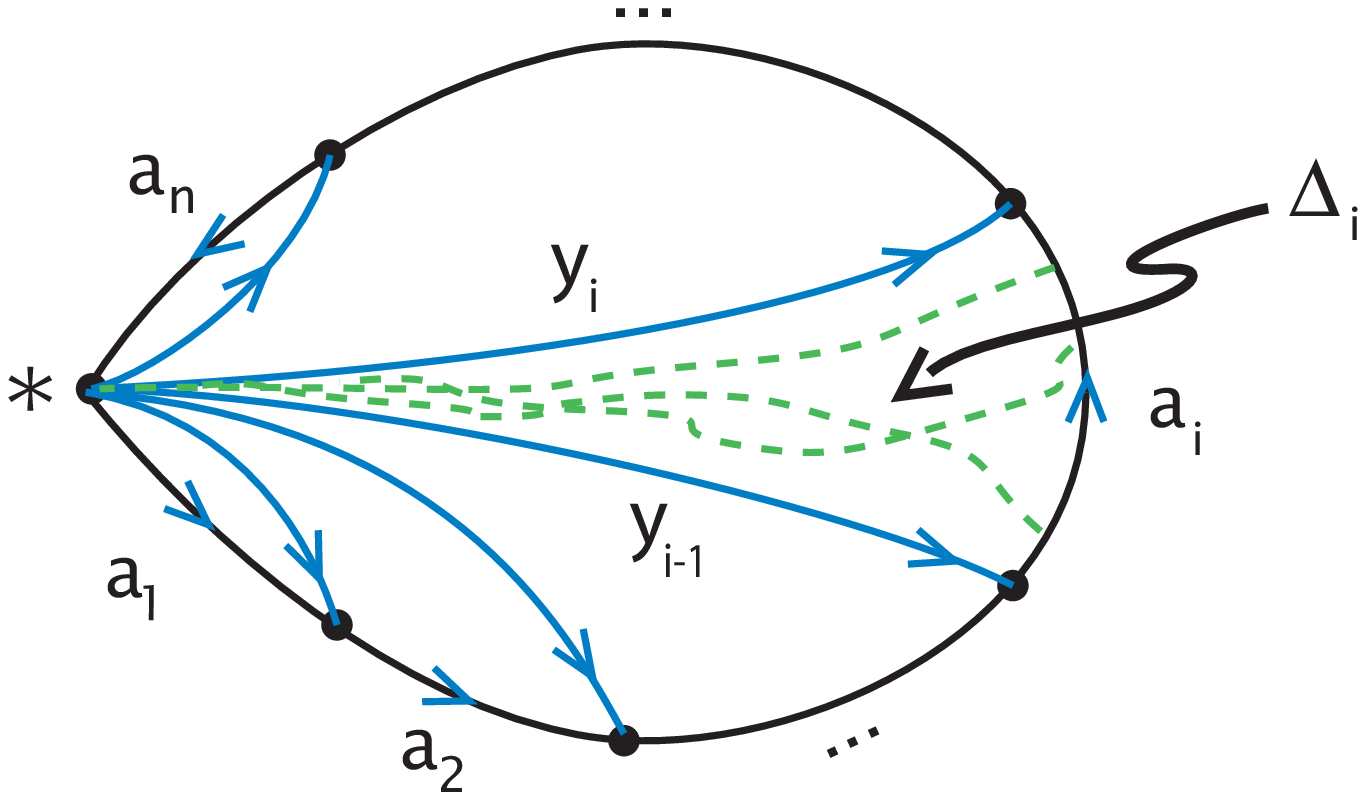}
\caption{Boundary 1-combing, \ehy, and seashell process}\label{fig:seashellcomb}
\end{center}
\end{figure}

In the extrinsic setting,
the seashell quotient map $\alpha$ preserves 
extrinsic distances (irrespective of whether or
not the normal forms are geodesics).
That is, for any point $q$ in $\dd_i$, 
we have $\pi_{\dd_i}(q)=\pi_{\dd_w}(\alpha(q))$, and 
hence $\td_X(\ep,\pi_{\dd_i}(q))=\td_X(\ep,\pi_{\dd_w}(\alpha(q)))$.
Now for each $p \in \bo \dd_w$, 
we have $p=\alpha(p')$ for a point 
$p'$ in $\he_i \subseteq \dd_i$ for some $i$, and 
$\ps_w(p,t)=\alpha(\tht_i(p',t))$ for all
$t \in [0,1]$.  Thus if each $\pi_{\dd_i} \circ \tht_i$ is an
$f$-tame map, then
$\pi_{\dd_w} \circ \ps_w$ is also $f$-tame.

Next suppose that each of the words in the set
$\cc$ of  normal
forms are geodesic.
To show that 
$f$-tameness of {\oc}s is
preserved by the seashell construction in this case,
it similarly suffices to show that the map $\alpha$
preserves (intrinsic) coarse distance.

In order to analyze coarse distances in
the van Kampen diagram $\dd_w$, 
we begin by supposing that $p$ is any vertex in $\dd_w$.  
Then $p=\alpha(q)$ for some
vertex $q \in \dd_i$ (for some $i$).  The identification
map $\alpha$ cannot increase distances to the 
basepoint, so we have $d_{\dd_w}(*,p) \le d_{\dd_i}(*,q)$.
Suppose that $\beta:[0,1] \ra \dd_w$ is an edge path 
in $\dd_w$ from $\beta(0)=*$
to $\beta(1)=p$ of length strictly less than $d_{\dd_i}(*,q)$.
This path cannot stay in the (closed)
subcomplex $\alpha(\dd_i)$ of $\dd_w$,
and so there is a minimum time $0 < s \le 1$ such
that $\beta(t) \in \alpha(\dd_i)$ for all $t \in [s,1]$.
Then the point $\beta(s)$ must lie on the image
of the boundary of $\dd_i$ in $\dd_w$.  Since the
words $y_{i-1}$ and $y_i$ label geodesics in the
Cayley complex of the presentation $\pp$, 
these words must also label geodesics in
$\dd_i$ and $\dd_w$.  Hence we can replace the
portion of the path $\beta$ on the interval $[0,s]$
with the geodesic path along one of these words
from $*$ to $\beta(s)$, to obtain a new edge path
in $\alpha(\dd_i)$ from $*$ to $p$ of length 
strictly less than $d_{\dd_i}(*,q)$.  Since
$\alpha$ embeds $\dd_i$ in $\dd_w$, this
results in a contradiction.  
Thus for each vertex $p=\alpha(q)$ in $\dd_w$,
we have $d_{\dd_w}(*,p)=d_{\dd_i}(*,q)$.
Since the coarse distance from the 
basepoint to 
any point in the interior of an
edge or 2-cell in $\dd_w$ is computed
from path metric  distances of vertices,
this also shows that for any
point $p$ in $\dd_w$ with $p=\alpha(q)$ for
some point $q \in \dd_i$, we have
$\td_{\dd_w}(*,p)=\td_{\dd_i}(*,q)$, as required.
\end{proof}

The heart the proof of Corollary~\ref{cor:etistc}
consists of showing that the converse of 
Lemma~\ref{lem:edgeimpliesbdry} holds; that is,
that every \cfl\ induces a \cnf\ such that 
intrinsic and extrinsic tameness is preserved
(up to Lipschitz equivalence).  However,
this converse requires more work, because the
domain of the \oc\ essentially needs to be rerouted
from the entire boundary of a van Kampen diagram
to a single edge. 

In order to accomplish this, we first need to
``lift" the domain from the boundary of the van Kampen
diagram up to a circle.
Our definition of a \vky\  
$\ps:\bo \dd \times [0,1] \ra \dd$
is ``natural'', in the sense that the
first factor in the domain of this function 
is a subcomplex of the associated van Kampen
diagram $\dd$; however, this
requires that for each point $p$ on an
edge $e$ of $\bo \dd$,
there is a unique choice of path from
the basepoint $*$ to $p$ via this \oc.
When traveling along the boundary $\bo \dd$ 
counterclockwise, a
point $p$ (and undirected edge $e$) may be traversed more
than once, and we use our lift to relax this constraint and allow
different combings of this point corresponding
to the different traversals.  

For any natural number $n$, let 
$C_n$ be the Euclidean circle $S^1$
with a 1-complex structure consisting of
$n$ vertices (one of which is the basepoint
$(-1,0)$) and $n$ edges.
%
A {\em \dhy} of a van Kampen diagram $\dd$
over $\pp$ for a word $w$ of length $n$ is a continuous 
function $\ph:C_n \times [0,1] \ra \dd$ such that
\begin{itemize}
\item[(d1)] $\ph(p,0)=*$ for all $p \in C_n$, and
the function $\ph(\cdot,1):C_n \ra \dd$ 
satisfies $\ph((-1,0),1)=*$ and, going counterclockwise
around $C_n$, maps each subsequent edge of $C_n$ 
homeomorphically onto the next edge in $\path_\dd(1,w)$,
%
\item[(d2)] $\ph((-1,0),t)=*$ for all $t \in [0,1]$,  
and
\item[(d3)] if $p \in C_n\skz$, 
then $\ph(p,t) \in \dd\sko$ for all $t \in [0,1]$.
\end{itemize}
That is, 
for each edge $e$ of $C_n$, the set $\he:=\ph(e \times \{1\})$
is an edge of $\bo\dd$ and 
$\ph \circ (\ph(\cdot,1)|_e^{-1} \times \idmap):
\he \times [0,1] \ra \dd$
is a \oc\ 
based at  $*$; although strictly
speaking $\ph$ is not a \oc\ since $C_n$ is not
a subcomplex of $\dd$, we use this terminology
to express the connection to these {\oc}s for edges.
A {\em \scf} is a collection 
$\cld=\{(\dd_w,\ph_w)\mid w \in A^*, w=_G \ep\}$
such that for each $w$, $\dd_w$ is a van Kampen
diagram for $w$, and $\ph_w$ is a \dhy\ of $\dd_w$.

\begin{remark}~\label{rmk:infinite}
{\em
If the group $G$ is finite, then in Section~\ref{subsec:finite}
we show that there is a filling 
$\{\dd_w\mid w \in A^*, w=_G \ep\}$ 
and a constant $C$ such that 
the intrinsic and extrinsic diameters of 
each van Kampen diagram $\dd_w$ are bounded above
by $C$, and so the \dhs\ associated to
any \scf\ $\cld=\{(\dd_w,\ph_w)\mid w \in A^*, w=_G \ep\}$
built from this filling satisfy the property that
$\ph_w$ and $\pi_{\dd_w} \circ \ph_w$ are both
tame with respect to the constant function
$n \mapsto C$.  In contrast, suppose that $G$ is infinite,
$f:\nn \ra \nn$ is a nondecreasing function,
and $\cld=\{(\dd_w,\ph_w)\mid w \in A^*, w=_G \ep\}$
is a \scf\ for $G$ such that each 
\dhy\ $\ph_w$ 
is $f$-tame.
Then for any natural number $n$ there is
a word $w_n$ labeling a geodesic in the
Cayley graph for $G$; let
$\ph := \ph_{w_nw_n^{-1}}$. 
Now the path $\ph(1,\cdot)$
from $\ph(1,0)=*$ ends at the vertex
$p := \ph(1,1)=\term(\path_{\dd_{w_nw_n^{-1}}}(1,w_n))$
which lies at a coarse distance $n$ from $*$
in $\dd_{w_nw_n^{-1}}$.  The path $\ph(1,\cdot)$
must traverse another cell $\sigma$ of $\dd_{w_nw_n^{-1}}$
immediately before reaching $p$, and 
since the vertex $p$ lies in $\bo \sigma$,
the coarse distance from $*$ to any point
of $\sigma$ must be at least $n-\frac{3}{4}$.
Thus $f$ satisfies
$f(n) \ge n-\frac{3}{4}$ for $n \in \N$, 
and so $f(n) \ge f(\lfloor n \rfloor) \ge \lfloor n \rfloor - \frac{3}{4}
> n-2$ for all $n \in \nn$.
A similar argument shows that if instead we assume
that each $\pi_{\dd_w} \circ \ph_w$ is $f$-tame,
then again $f(n) > n-2$ for all $n \in \nn$.
}
\end{remark}

In Proposition~\ref{prop:htpydomain} below, the
extrinsic result $(1) \Leftrightarrow (4)$ (or $(3)$) is used in the proof 
of Corollary~\ref{cor:etistc}, and the equivalence
$(1) \Leftrightarrow (2)$ in both the intrinsic
and extrinsic cases is used in the
quasi-isometry invariance proof for Theorem~\rfthmitisqi.

\begin{proposition}\label{prop:htpydomain}
Let $G$ be a group with a 
finite symmetric presentation $\pp$,
and let $f:\nn \ra \nn$ be a nondecreasing function.
The following are equivalent, up to Lipschitz equivalence
of the function $f$:
\begin{enumerate}
\item $f$ is an intrinsic [respectively, extrinsic] 
\tff\ for $(G,\pp)$.
\item $(G,\pp)$ has an \scf\ 
$\cld=\{(\dd_w,\ph_w)\mid w=_G \ep\}$ 
such that each \dhy\ $\ph_w$ [respectively, $\pi_{\dd_w} \circ \ph_w$]
is $f$-tame.
\item $(G,\pp)$ has a geodesic \cnf\ 
$\cle=\{(\dd_e,\tht_e) \mid e \in E_X\}$
such that each \ehy\ $\tht_e$ [respectively, $\pi_{\dd_e} \circ \tht_e$]
is $f$-tame.
\end{enumerate}
In addition, in the extrinsic case:
\begin{itemize}
\item[(4)] $(G,\pp)$ has a  \cnf\ 
$\cle=\{(\dd_e,\tht_e) \mid e \in E_X\}$
such that each $\pi_{\dd_e} \circ \tht_e$ 
is $f$-tame.
\end{itemize}
\end{proposition}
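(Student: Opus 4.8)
The plan is to prove the stated equivalences by running the cycle $(1)\Rightarrow(2)\Rightarrow(3)\Rightarrow(1)$, which will go through verbatim in both the intrinsic and the extrinsic settings, and then adjoining $(3)\Rightarrow(4)$ and, in the extrinsic case, $(4)\Rightarrow(1)$; since each arrow changes $f$ only by a Lipschitz equivalence — in fact only the arrow $(2)\Rightarrow(3)$ changes it at all — this yields all the asserted equivalences. Two of the arrows are immediate from work already in place. For $(3)\Rightarrow(1)$: a geodesic \cnf\ in which every \ehy\ $\tht_e$ (respectively every $\pi_{\dd_e}\circ\tht_e$) is $f$-tame is exactly the hypothesis of Lemma~\ref{lem:edgeimpliesbdry}, whose conclusion is a \cfl\ $\{(\dd_w,\ps_w)\}$ with every $\ps_w$ (respectively every $\pi_{\dd_w}\circ\ps_w$) $f$-tame, i.e. a witness that $f$ is an intrinsic (respectively extrinsic) \tff; and in the extrinsic case that lemma does not use geodesicity, so the identical argument gives $(4)\Rightarrow(1)$. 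The implication $(3)\Rightarrow(4)$ merely discards the geodesic hypothesis. Thus the content lies in $(1)\Rightarrow(2)$ and, mainly, in $(2)\Rightarrow(3)$.

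For $(1)\Rightarrow(2)$ I would, for each $w$ with $w=_G\ep$, take a van Kampen diagram $\dd_w$ with an $f$-tame \vky\ $\ps_w$ (respectively with $\pi_{\dd_w}\circ\ps_w$ $f$-tame) supplied by Definition~\ref{def:tfi}, and compose with the cellular ``wrap-around'' map $\kappa_w:C_{l(w)}\to\bo\dd_w\subseteq\dd_w$ that fixes the basepoint and carries $C_{l(w)}$ once around the boundary edge-cycle $\path_{\dd_w}(1,w)$. Setting $\ph_w:=\ps_w\circ(\kappa_w\times\idmap)$, one checks (d1)--(d3) from (C1)--(C3), and since $\td_{\dd_w}(*,\ph_w(p,t))=\td_{\dd_w}(*,\ps_w(\kappa_w(p),t))$ and $\pi_{\dd_w}\circ\ph_w=(\pi_{\dd_w}\circ\ps_w)\circ(\kappa_w\times\idmap)$, the map $\ph_w$ (respectively $\pi_{\dd_w}\circ\ph_w$) is $f$-tame with no loss; then $\cld=\{(\dd_w,\ph_w)\}$ is the required \scf. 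The purpose of first lifting the domain from $\bo\dd_w$ to the circle $C_{l(w)}$ is to make available, in the next step, the two complementary arcs of $C_{l(w)}$, on which the combing can be manipulated independently of the edge being combed.

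The heart is $(2)\Rightarrow(3)$. I would fix geodesic normal forms $\cc=\{y_g\}$ (geodesics label simple paths) together with a once-and-for-all monotone parametrization of each edge-path $\path_X(\ep,y_g)$. Given $\cld$, for an undirected edge $e$ I choose a directed representative $\ega$, put $w_e:=y_gay_{ga}^{-1}$, take $(\dd_{w_e},\ph_{w_e})$ from $\cld$, and set $\dd_e:=\dd_{w_e}$; write $\he=\path_{\dd_{w_e}}(y_g,a)$ for the $a$-edge of $\bo\dd_{w_e}$, with endpoints $\hat g$ and $\widehat{ga}$, and let $E_a\subseteq C_{l(w_e)}$ be the edge lying over $\he$ and $A_g,A_{ga}$ the two complementary arcs, which $\ph_{w_e}(\cdot,1)$ carries onto the boundary paths labeled $y_g$ and $y_{ga}$. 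The restrictions of the single $f$-tame map $\ph_{w_e}$ to $(A_g\cup E_a)\times[0,1]$ and to $(E_a\cup A_{ga})\times[0,1]$ are the raw material, and I would splice them over the square $\he\times[0,1]$ to build $\tht_e$ by a \emph{staircase} reparametrization: on the left half of $\he\times[0,1]$ the path attached to a point $p$ first runs up a ``vertical'' segment — a combing path $\ph_{w_e}(x_p,\cdot)$, which is $f$-tame and terminates on $\bo\dd_{w_e}$ — and then runs monotonically along the boundary, out along a portion of the geodesic path labeled $y_g$ and then across $\he$ to $p$; as $p\to\hat g$ the vertical part degenerates and $\tht_e(\hat g,\cdot)$ becomes exactly the boundary path labeled $y_g$ with the chosen standard parametrization, while as $p$ reaches the midpoint the boundary tail degenerates and $\tht_e(p,\cdot)$ becomes $\ph_{w_e}$ read along $E_a$; the right half is symmetric, using $y_{ga}$, and the two halves agree at the midpoint. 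Existence of the splice uses only that a square maps into the contractible diagram $\dd_{w_e}$ extending any prescribed boundary behaviour; the gluing condition for the resulting $\cle$ holds because at each vertex $\tht_e$ projects to the fixed standard parametrization of the relevant $\path_X(\ep,y_h)$, the same for every $e$; and $f$-tameness is preserved up to a Lipschitz change because a concatenation of an $f$-tame path followed by a distance-nondecreasing path is again $f$-tame, the only extra cost being an additive constant from the single edge $\he$ and from the reparametrizations, so that $f$ is replaced by $n\mapsto f(n+c)+c$. The extrinsic case runs identically with $\pi_{\dd_{w_e}}$ applied throughout, using that $\pi_{\dd_{w_e}}$ carries the geodesic path labeled $y_g$ to the geodesic $\path_X(\ep,y_g)$; and then $(3)\Rightarrow(4)$ is trivial.

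I expect this last step to be the main obstacle, precisely because of the tameness bookkeeping: a naive rerouting that first homotoped the endpoint combing paths $\ph_{w_e}(v,\cdot)$ to the geodesic boundary paths and only afterwards replayed the old edge combing would concatenate \emph{two} $f$-tame paths, replacing $f$ by $f\circ f$, which is not Lipschitz equivalent to $f$ in general; the staircase construction is designed to insert a distance-monotone geodesic-boundary segment in place of a second tame segment, which is where geodesicity of the normal forms is essential. Two further points will need routine attention: the degenerate configurations — $\bo\dd_{w_e}$ not embedded, $y_g$ or $y_{ga}$ empty, or $w_e$ freely reducing so that $\dd_{w_e}$ is a tree — handled by minor adjustments; and the finite-group case, where by Remark~\ref{rmk:infinite} together with the results of Section~\ref{subsec:finite} all of (1)--(4) hold simultaneously with a common constant function, while in the infinite case Remark~\ref{rmk:infinite} lets us assume, up to Lipschitz equivalence, that $f(n)\ge\lceil n\rceil$, which is what legitimizes the ``$f\ge$ identity'' steps in the bookkeeping above.
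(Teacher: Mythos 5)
Your proposal is correct and follows essentially the same route as the paper: the same cycle of implications, the same wrap-around map for $(1)\Rightarrow(2)$, and for $(2)\Rightarrow(3)$ the same two-stage edge 1-combing (circular combing path to a point of $C_{l(w_e)}$ followed by a monotone run along the geodesic boundary word and across $\he$), with the same use of Remark~\ref{rmk:infinite} and the finite-group case to absorb the additive corrections into a Lipschitz equivalence. The paper simply writes the splice down explicitly via the maps $\gamma$ and $\delta_p$ rather than invoking an extension-over-the-square argument.
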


\begin{proof}
In the extrinsic case, the result $(3) \Rightarrow (4)$ is
immediate.  The implications $(4) \Rightarrow (1)$ in the
extrinsic case and $(3) \Rightarrow (1)$ in the intrinsic case
are Lemma~\ref{lem:edgeimpliesbdry}. \\
\noindent $(1) \Rightarrow(2)$: \\
Given a van Kampen diagram
$\dd$ for a word $w$ of length $n$
and a \vky\ 
$\ps:\bo \dd \times [0,1] \ra \dd$,
let $\vt:C_n \ra \dd_n$ be any cellular map
that is a homeomorphism on closed 1-cells
taking $(-1,0)$ to $*$ and mapping $C_n$
(counterclockwise) along $\path_\dd(1,w)$.
Then the composition $\ph=\ps \circ (\vt_\dd \times \idmap) :
C_n \times [0,1] \ra \dd$ is a \dhy\ for this diagram.
The fact that the identity function is used on the
[0,1] factor implies that tameness of the {\oc}s
is preserved.

\noindent $(2) \Rightarrow(3)$:  


Let $\cc:=\{y_g \mid g \in G\}$ be the set of 
shortlex normal forms with respect to some 
total ordering of $A$.  
That is, for any two words $z,z'$ over
$A$, define $z \slex z'$ if 
the word lengths satisfy $l(z)<l(z')$,
or else $l(z)=l(z')$ and $z$ is less than 
$z'$ in the
corresponding lexicographic ordering on $A^*$.

For any edge $e \in E_X$ with endpoints $g$ and $ga$, 
we orient the edge
$e$ from vertex $g$ to $ga$ if $y_g \slex y_{ga}$. 
There is a pair $(\dd_{w_e},\ph_{w_e})$ in $\cld$
associated to the word $w_e:=y_gay_{ga}^{-1}$.
Define $\dd_e:=\dd_{w_e}$, and
let $\he:=\path_{\dd_e}(y_g,a)$ 
be the edge in 
$\bo\dd_e$ associated to the
letter $a$ in $w_e$.

We construct an \ehy\  
$\tht_{e}: \he \times [0,1] \ra \dd_e$ as follows
(and depicted in Figure~\ref{fig:vktoedgehtpy}).
Let $\gamma: \he \ra C_{l({w_e})}$ be a continuous map that
wraps the edge $\he$ once (at constant speed)
in the counterclockwise direction
along the circle, with the endpoints of $\he$
mapped to $(-1,0)$.  
For each point $p$ in $\he$, and for all
$t \in [0,\frac{1}{2}]$, define 
$\tht_e(p,t):=\ph_{w_e}(\gamma(p),2t)$.

Let $\hhe$ be the (directed) edge of $C_{l({w_e})}$ corresponding
to the edge $\he$ of the boundary path in $\bo \dd_e$, 
with endpoint $v_1$ of $\hhe$ occurring earlier than endpoint $v_2$
in the counterclockwise path beginning at $(-1,0)$.
Let $\vt_e:C_{l({w_e})} \ra \bo \dd_e$ be the map 
$\vt_e(q):=\ph_{w_e}(q,1)$
wrapping $C_{l({w_e})}$ around $\bo \dd_e$.
Also let $\hhr_1$, $\hhr_2$ be the arcs of $C_{l({w_e})}$ mapping
via $\vt_e$ to
$\path_\dd(1,y_g)$ and $\path_\dd(y_ga,y_{ga}^{-1})$, respectively, 
in $\bo \dd_e$.
For each point $p$ in the interior 
of the edge $\he$,
there is a unique point $\hhp$ in $\hhe$ with
$\vt_e(\hhp)=p$.
There is an arc (possibly a single point) in $C_{l({w_e})}$
from $\gamma(p)$ to $\hhp$ that is disjoint from the point
$(-1,0)$; let $\delta_p:[\frac{1}{2},1] \ra C_{l({w_e})}$
be the constant speed path following this arc.
That is, $\vt_e \circ \delta_p$ is a path in
$\bo \dd_e$ from $\vt_e(\gamma(p))$ to $p$.
In particular, if $\gamma(p)$ lies in $\hhr_1$, then
the path $\vt_e \circ \delta_p$ follows the end portion
of the boundary path labeled by $y_g$ from 
$\vt_e(\gamma(p))$ to the endpoint $\vt_e(v_1)$ 
of $\he$
and then follows a portion of $\he$ to $p$.  If $\gamma(p)$
lies in $\hhr_2$, the path $\vt_e \circ \delta_p$
follows a portion of the boundary path $y_h$
and $\he$  clockwise
from  $\vt_e \circ \delta_p$ via $\vt_e(v_2)$ to $p$,
and if $\gamma(p)$ is in $\hhe$, then
the path $\vt_e \circ \delta_p$ remains in $\he$.
Finally, for either endpoint $p=\vt_e(v_i)$ (with $i=1,2$),
let $\delta_p:[\frac{1}{2},1] \ra C_{l({w_e})}$
be the constant speed path along the arc $\hhr_i$ in $C_{l({w_e})}$
from $(-1,0)$ to $v_i$.  
Now for all $p$ in $\he$ and $t \in [\frac{1}{2},1]$,
define
$\tht_e(p,t):=\ph_{w_e}(\delta_p(t),1)$.

Combining the last sentences of the previous
two paragraphs, we have constructed a continuous function
$\tht_e:\he \times [0,1] \ra \dd_e$.
See Figure~\ref{fig:vktoedgehtpy} for an illustration 
of this map.
\begin{figure}
\begin{center}
\includegraphics[width=3.4in,height=1.2in]{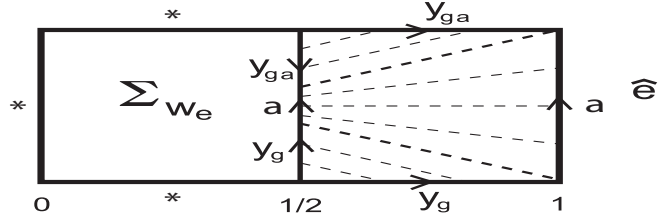}
\caption{Edge \oc\ $\tht_e$ in Proposition~\ref{prop:htpydomain} 
proof (of (2) $\Rightarrow$ (3))}\label{fig:vktoedgehtpy}
\end{center}
\end{figure}
The \dhy\ conditions satisfied by $\ph_{w_e}$
imply that $\tht_e$ is an \ehy.  
Let $\cle$ be the collection
$\cle=\{(\dd_e,\tht_e) \mid 
e \in E_X \}$.
Then $\cc$ together with $\cle$ define a geodesic \cnf\ of the
pair $(G,\pp)$.

Now we turn to analyzing the tameness of the
\ehy\  $\tht_e:\he \times [0,1] \ra \dd_e$.
We will give the proof for the extrinsic case;
the intrinsic proof is nearly identical.
Suppose that $\pi_{\dd_w} \circ \ph_w$
is $f$-tame for each \dhy\ $\ph_w$ of the \scf\ $\cld$.

Suppose that $p$ is any point in 
$\he$.  Then $f$-tameness of $\pi_{\dd_w} \circ \ph_{w_e}$
implies that for all
$0 \le s<t \le \frac{1}{2}$, we have
$\td_{X}(\ep,\pi_{\dd_e}(\tht_e(p,s))) \le f(\td_{X}(\ep,\pi_{\dd_e}(\tht_e(p,t))))$.

The path
$\pi_{\dd_e}(\tht_e(p,\cdot))$ 
on the second half of the interval $[0,1]$ follows a
portion of a geodesic in $X$ (labeled $y_g$ or $y_{ga}$)
going steadily away from the basepoint
$\ep$, with the possible exception
of the end portion of this path that
is completely contained in the edge $e$.
Hence for all $\frac{1}{2} \le s<t \le 1$, we have
$\td_{X}(\ep,\pi_{\dd_e}(\tht_e(p,s))) \le \td_{X}(\ep,\pi_{\dd_e}(\tht_e(p,t)))+1$.

Finally, whenever $0 \le s<\frac{1}{2}<t \le 1$,
we have

\hspace{.4in} $\td_{X}(\ep,\pi_{\dd_e}(\tht_e(p,s))) \le 
  f(\td_{X}(\ep,\pi_{\dd_e}(\tht_e(p,\frac{1}{2}))))
\le f(\td_{X}(\ep,\pi_{\dd_e}(\tht_e(p,t)))+1),$

\noindent where the latter inequality uses the nondecreasing property
of $f$.

Putting these three cases together, the \oc\ 
 $\pi_{\dd_e} \circ \tht_e$ is  $g$-tame
with respect to the nondecreasing function
$g:\nn \ra \nn$ given by $g(n)=f(n+1)+n+1$ for all $n \in \nn$.
In the case that $G$ is an infinite group,
Remark~\ref{rmk:infinite} shows that 
$g(n) < 2f(n+1)+2$, and so
$g$ is Lipschitz
equivalent to $f$.

If instead $G$ is a finite group, then 
Remark~\ref{rmk:infinite} says  
there is a \scf\ 
$\cld'=\{(\dd_w',\ph_w')\mid w \in A^*, w=_G \ep\}$
such that each \dhy\ $\ph_w'$ 
[respectively, $\pi_{\dd_w'} \circ \ph_w'$]
is $h$-tame with respect to the constant
function $h(n) \equiv C$.  
Applying the procedure above to construct a
geodesic \cnf\ $\cle'=\{(\dd_e',\tht_e') \mid e \in E_X\}$
from $\cld'$ instead,
then since each van Kampen diagram
$\dd_e'$ has diameter bounded by $C$, we have
that each \ehy\ $\tht_e'$ [respectively, 
each function $\pi_{\dd_e'} \circ \tht_e'$]
is $h$-tame.
Then each $\tht_e'$ [respectively, $\pi_{\dd_e'} \circ \tht_e'$]
is also $g$-tame for the function $g:=f+h$ that is Lipschitz
equivalent to~$f$.
\end{proof}

Now we are ready to turn to the concept of tame combability.
The {\oc}s considered in~\cite{hmeiermeastame} and here 
satisfy more restrictions than those of
Mihalik and Tschantz~\cite{mihaliktschantz}, 
in that the \oc\ lifts to
van Kampen diagrams.  
More precisely, a {\em \dia\  \oc} of a
Cayley complex $X$ is a \oc\ 
$\up$ of the pair $(X,\xx)$ based at $\ep$
that satisfies:
\begin{itemize}
\item for all $v \in X\skz$,
the path $\up(v,\cdot)$ follows a simple path
labeled by a word $w_v$, 
and
\item whenever $e$ is a directed edge from 
vertex $u$ to vertex $v$ in $X$ labeled by $a$, then
there is a van Kampen diagram $\dd$ with respect to
$\pp$ for the word $w_uaw_v^{-1}$, 
together with
an \ehy\  $\tht:\hat e \times [0,1] \ra \dd$ 
associated to the edge $\hat e$ of $\bo \dd$ corresponding
to the letter $a$ in this boundary word,
such that 
$\up \circ (\pi_\dd \times \idmap)|_{\hat e \times [0,1]}
=\pi_{\dd} \circ \tht$.
\end{itemize}

A nondecreasing function $f:\nn \ra \nn$ is called
a {\em \tcf}\label{def:tcf} for a group $G$ with respect to a finite
symmetric presentation $\pp$ if there is an $f$-tame 
\dia\ \oc\ of the Cayley complex.
We note that this concept is Lipschitz equivalent to the
``radial tameness function'' defined in~\cite{hmeiermeastame}; 
there, coarse distance in $X$
is described in terms of ``levels'', and the 
definition of coarse distance for a 2-cell 
is defined slightly differently from that on p.~\ref{def:coarsedist}
in Section~\ref{sec:intro}.


The equivalence of extrinsic \tffs\ with 
\tcfs\  in Corollary~\ref{cor:etistc}
now follows from the observation that
\dia\ 1-combings are projections of 
{\cnf}s in the Cayley complex, along with
Proposition~\ref{prop:htpydomain}.

\begin{corollary}\label{cor:etistc}
Let $G$ be a finitely presented group.
Up to Lipschitz equivalence of nondecreasing functions,
the function $f$ is an extrinsic \tff\ for $G$
 if and only if $f$ is a \tcf\ for $G$.
\end{corollary}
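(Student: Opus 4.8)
The plan is to deduce Corollary~\ref{cor:etistc} from the equivalence $(1)\Leftrightarrow(4)$ of Proposition~\ref{prop:htpydomain} (in the extrinsic case), by matching up the data of a \cnf\ in the Cayley complex with that of a \dia\ \oc. Recall that a \dia\ \oc\ $\up$ of $X$ assigns to each vertex $v$ a simple path labeled $w_v$, and to each directed edge $\ega$ a van Kampen diagram for $w_uaw_v^{-1}$ (where the edge runs from $u$ to $v$) together with an \ehy\ whose projection to $X$ agrees with $\up$ restricted to (the lift of) that edge. On the other hand, a \cnf\ is precisely a collection $\{(\dd_e,\tht_e)\mid e\in E_X\}$ of \edg s with \ehs\ satisfying the gluing condition. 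So the two objects carry nearly identical data; the content of the corollary is that, after projecting to $X$, having one is equivalent to having the other, up to the $f$-tameness bookkeeping.

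First I would prove the direction \emph{\tcf\ $\Rightarrow$ extrinsic \tff}. Given an $f$-tame \dia\ \oc\ $\up$ of $X$, for each edge $e\in E_X$ pick one of its two orientations $\ega$ (say from $u$ to $v$) and take the van Kampen diagram $\dd_e$ and \ehy\ $\tht_e$ supplied by the definition of \dia\ \oc; since the normal forms here are the simple paths $w_v=\lbl(\up(v,\cdot))$, this need not be geodesic, so I am aiming for condition $(4)$ rather than $(3)$. The gluing condition is exactly what the definition of \dia\ \oc\ guarantees: at a common endpoint $g$ of two edges $e,e'$, both $\pi_{\dd_e}\circ\tht_e$ and $\pi_{\dd_{e'}}\circ\tht_{e'}$ agree with $\up(g,\cdot)$, so they agree with each other, with the same parametrization. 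And $f$-tameness of each $\pi_{\dd_e}\circ\tht_e$ follows from $f$-tameness of $\up$ together with the identity $\up\circ(\pi_\dd\times\idmap)|_{\hat e\times[0,1]}=\pi_{\dd}\circ\tht_e$, since coarse distance in $X$ is computed the same way whether we think of a point as lying in $X$ or as the image of a point of $\dd_e$. Hence $(4)$ of Proposition~\ref{prop:htpydomain} holds, and that proposition gives that $f$ is an extrinsic \tff\ up to Lipschitz equivalence.

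Conversely, for \emph{extrinsic \tff\ $\Rightarrow$ \tcf}, I would start from Proposition~\ref{prop:htpydomain}$(3)$: $f$ being an extrinsic \tff\ yields (up to Lipschitz equivalence) a \emph{geodesic} \cnf\ $\cle=\{(\dd_e,\tht_e)\mid e\in E_X\}$ with each $\pi_{\dd_e}\circ\tht_e$ being $f$-tame. I then build a \dia\ \oc\ $\up$ of $X$ by setting $w_v:=y_v$, the geodesic normal form, for each vertex $v$, and defining $\up(v,\cdot)$ to be the path in $X$ labeled $y_v$; this is simple because the normal forms label simple paths. To see $\up$ extends to a genuine \oc\ of $(X,\xx)$ based at $\ep$ (continuity across edges), I use the gluing condition in the \cnf: for an edge $e$ with endpoints $u,v$, the \ehy\ $\tht_e$ has the property that the paths to the two endpoints of the distinguished edge follow $y_u$ and $y_v$ in $\bo\dd_e$, and projecting by $\pi_{\dd_e}$ lands these on $\up(u,\cdot)$ and $\up(v,\cdot)$; the gluing condition forces consistency at shared vertices, so the projected \ehs\ patch together to a continuous $\up$ on all of $X\times[0,1]$ defined edge by edge as $\up|_{e\times[0,1]}:=\pi_{\dd_e}\circ\tht_e\circ(\text{reparametrization of }e)$. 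The two bullet conditions in the definition of \dia\ \oc\ then hold essentially by construction: the paths $\up(v,\cdot)$ are simple, and for each directed edge the required van Kampen diagram and \ehy\ are exactly $(\dd_e,\tht_e)$ (using a mirror image if the chosen orientation of $e$ in $\cle$ is the opposite one, which does not affect the projection). Finally, $f$-tameness of $\up$ is immediate since on each edge $\up$ is literally $\pi_{\dd_e}\circ\tht_e$ up to reparametrizing the $[0,1]$-factor by the identity, and on vertices the path $\up(v,\cdot)$ runs along a geodesic so is automatically tame with respect to the identity function, hence with respect to $f$ provided $f(n)\ge n$ — which holds up to Lipschitz equivalence by Remark~\ref{rmk:infinite} (for infinite $G$; the finite case is handled by the constant bound as in that remark).

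The main obstacle I anticipate is purely bookkeeping rather than conceptual: carefully checking that the identifications needed to pass between ``a \oc\ defined on a subcomplex of $\dd$'' and ``a \oc\ defined on a subcomplex $\xx$ of $X$'' are compatible, that the gluing/consistency conditions line up exactly, and that the reparametrizations one introduces (wrapping edges, the $[0,1]\to[0,1]$ identity factor) preserve $f$-tameness without worsening the Lipschitz class. The one genuinely substantive point is the direction \tff\ $\Rightarrow$ \tcf, where I must know that the \cnf\ coming from Proposition~\ref{prop:htpydomain}$(3)$ can be taken geodesic; but that is precisely built into the statement of that proposition, so the corollary really is, as the text says, ``the observation that \dia\ 1-combings are projections of {\cnf}s in the Cayley complex, along with Proposition~\ref{prop:htpydomain}.''
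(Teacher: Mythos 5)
Your proposal is correct and follows essentially the same route as the paper: both directions pass through Proposition~\ref{prop:htpydomain} (using the geodesic \cnf\ of item $(3)$ for the \tff\ $\Rightarrow$ \tcf\ direction and the non-geodesic item $(4)$ for the converse), with the \dia\ \oc\ defined edge-by-edge as $\pi_{\dd_e}\circ\tht_e$ and well-definedness supplied by the gluing condition. The paper's proof is just a terser version of the same bookkeeping.
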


\begin{proof}
Let $\pp$ be a finite presentation for $G$.
In the case that $f$ is a \tff\ for $(G,\pp)$,
by applying Proposition~\ref{prop:htpydomain}, 
we also have that $(G,\pp)$
has a \cnf\ given 
by a set $\cc$ of geodesic normal forms together with a collection 
$\cle=\{(\dd_e,\tht_e) \mid e \in E_X\}$
of van Kampen diagrams and \ehs,
such that each $\pi_{\dd_e} \circ \tht_e$ is $g$-tame
for a nondecreasing function $g$ that
is Lipschitz equivalent to $f$.
Construct a \dia\ \oc\ of $(X,\xx)$ at $\ep$ as follows.  For 
any point $p \in \xx$, let $u$ be
an edge in $X$ containing $p$.  Then for all
$t \in [0,1]$, let $\up(p,t):=\pi_{\dd_u}(\tht_u(p,t))$.
The gluing condition of the definition of
a \cnf~(p.~\ref{defcnf}) ensures that $\up$ is well-defined.
Moreover $g$-tameness of $\pi_{\dd_u} \circ \tht_u$ then
implies that $\up$ is also $g$-tame.

On the other hand,
if $f$ is a \tcf\ for $(G,\pp)$,
with $f$-tame \dia\ \oc\ $\up$, the definition of
\dia\ implies that there is an associated
\cnf\ through which $\up$ factors.
Again tameness of $\up$ implies that  each of
the \ehs\ of this \cnf\ is $f$-tame, with
respect to the same function $f$, as an immediate
consequence, and Proposition~\ref{prop:htpydomain}
completes the proof. 
\end{proof}

We note  that each of the properties in 
Proposition~\ref{prop:htpydomain} and Corollary~\ref{cor:etistc}
must also have the same quasi-isometry invariance
as the respective \tff, from Theorem~\rfthmitisqi.
Combining this corollary with
Proposition~\ref{prop:itimpliesid}
shows that a \tcf\ is a
strengthening of the concept of,
and an upper bound for, the extrinsic diameter function.

The \tcf\  is 
fundamentally
an extrinsic object,
using (coarse) distances measured in the Cayley
complex;  
Corollary~\ref{cor:etistc}
above shows that 
the logical intrinsic analog of a \tcf\ 
is the concept of an intrinsic \tff.
Another consequence of this corollary
together with the proof of Proposition~\ref{prop:htpydomain}
is that  
if $f$ is a \tcf\ for a pair $(G,\pp)$, then  
up to replacing $f$ with a Lipschitz equivalent function
we can restrict
the associated \dia\ 1-combing $\up$ so that the
paths $\up:X\skz  \times [0,1] \ra X$ to
vertices in $X$ follow the shortlex normal forms.

We conclude this section by showing that
a well-defined \tff\ implies the group is tame combable.
A group $G$ is 
{\em tame combable}\label{def:tamecomb}~\cite{mihaliktschantz}
if there is a Cayley complex $X$ for $G$ with respect to
some finite presentation with a 1-combing 
$\ps$ of $(X,\xx)$ at $\ep$
satisfying the property
that whenever $\tau$ is a 0- or 1-cell in $X$
and $C$ is a compact subset of $X$, then there is a
compact set $D$ in $X$ such that
$\ps^{-1}(C) \cap (\tau \times [0,1])$ is contained in
a single path component of 
$\ps^{-1}(D) \cap (\tau \times [0,1])$.

\begin{corollary}\label{cor:tamecomb}
If $G$ has a well-defined extrinsic
\tff\ over some finite
presentation, then $G$ is tame combable.
\end{corollary}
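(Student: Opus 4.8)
The plan is to deduce Corollary~\ref{cor:tamecomb} from Corollary~\ref{cor:etistc} together with the construction packaged inside Proposition~\ref{prop:htpydomain}. Assume $G$ has a well-defined extrinsic \tff\ $f$ over some finite presentation $\pp$. By Corollary~\ref{cor:etistc}, up to Lipschitz equivalence $f$ is also a \tcf\ for $G$, so there is a Cayley complex $X$ for $(G,\pp)$ equipped with an $f$-tame \dia\ \oc\ $\up$ of the pair $(X,\xx)$ based at $\ep$. The goal is to verify that this same $\up$ (or a mild modification of it) witnesses tame combability in the sense of Mihalik--Tschantz: for every cell $\tau$ of dimension $0$ or $1$ in $X$ and every compact $C \subseteq X$, there is a compact $D \subseteq X$ with $\up^{-1}(C) \cap (\tau \times [0,1])$ contained in a single path component of $\up^{-1}(D) \cap (\tau \times [0,1])$.

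First I would reduce to a statement about a single path $\up(p,\cdot)$. Fix $\tau$ and a compact $C \subseteq X$. Since $X$ is locally finite, $C$ meets only finitely many cells, so $C$ is contained in the closed ball $\overline{B}(\ep, m)$ (in the coarse distance $\td_X$) for some $m \in \N$; enlarging $C$ only makes the task harder, so it suffices to treat $C = \overline{B}(\ep,m)$ and seek $D = \overline{B}(\ep, N)$ for a suitable $N = N(m)$. The natural candidate is $N := \lceil f(m) \rceil + 2$ (or any constant of this shape absorbing the $+\tfrac12$, $-\tfrac14$ fudge factors in the definition of $\td_X$). Now for a fixed $p \in \tau$, the set $\{ t \in [0,1] : \up(p,t) \in \overline{B}(\ep,m)\}$ must lie in a single path component (i.e.\ a subinterval) of $\{ t \in [0,1] : \up(p,t) \in \overline{B}(\ep, N)\}$: this is exactly the $f$-tameness inequality $\td_X(\ep,\up(p,s)) \le f(\td_X(\ep,\up(p,t)))$, which forbids the path, once it has exited $\overline{B}(\ep, f(m))$, from ever returning to $\overline{B}(\ep,m)$; hence the $t$-values landing in $C$ form a set that is an ``initial segment'' relative to the larger ball, with no later re-entry, so they sit in one interval of the preimage of the larger ball. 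Care is needed because $\tau$ has positive dimension when it is a $1$-cell, so $\up^{-1}(C) \cap (\tau \times [0,1])$ is a subset of a square, not of an interval, and I must argue that these fibrewise intervals assemble into a single path component of the $2$-dimensional set $\up^{-1}(D)\cap(\tau\times[0,1])$.

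To handle the $1$-cell case I would use continuity of $\up$ together with condition (C1), which forces $\up(p,0) = \ep \in C$ for every $p$, so $\tau \times \{0\}$ is entirely inside $\up^{-1}(C)$ and is path-connected; thus every point of $\up^{-1}(C)\cap(\tau\times[0,1])$ can be joined, within $\up^{-1}(D)\cap(\tau\times[0,1])$, down to the $t=0$ slice by running backwards along its own fibre --- legitimate precisely because the fibre segment from $t=0$ up to the given parameter stays in $\overline{B}(\ep,N)$, by the no-return argument of the previous paragraph applied with the endpoint parameter. Since $\tau\times\{0\}$ lies in the common component, all of $\up^{-1}(C)\cap(\tau\times[0,1])$ lies in one path component of $\up^{-1}(D)\cap(\tau\times[0,1])$, which is what tame combability requires. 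The main obstacle, and the place where I would be most careful, is precisely this $2$-dimensional connectivity bookkeeping for $1$-cells $\tau$: one must be sure that the ``descend to $t=0$'' paths remain inside the preimage of the slightly enlarged ball $D$ and that no pathology of the coarse-distance convention (the $\pm$ halves and quarters on edges and $2$-cells) breaks the clean ``exit-and-never-return'' picture --- this is why the constant in $N$ is taken to be $f(m)$ rounded up and then bumped by a small integer, so that crossing an edge or skirting a $2$-cell cannot produce a spurious dip in measured distance.
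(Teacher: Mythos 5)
Your proposal is correct and follows essentially the same route as the paper: both pass through Corollary~\ref{cor:etistc} to obtain an $f$-tame diagrammatic 1-combing $\up$, take $D$ to be (roughly) the coarse ball of radius $f(N)$ where $C$ lies in the ball of radius $N$, use the $f$-tameness "no-return" inequality to show each fiber $\up^{-1}(C)\cap(\{p\}\times[0,1])$ sits in an initial interval $\{p\}\times[0,s_p]$ whose image stays in $D$, and connect all these intervals through the slice $\tau\times\{0\}$ (which $\up$ sends to $\ep$). The only differences are cosmetic: the paper takes $D$ to be the subcomplex of cells all of whose vertices lie within $\lceil f(N)\rceil$ rather than a coarse ball with a bumped constant.
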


\begin{proof}
Using Corollary~\ref{cor:etistc}, there is
a \dia\ \oc\ $\up:\xx \times [0,1] \ra X$
of the Cayley complex $X$ for the presentation
that is $f$-tame with respect to a
nondecreasing function $f:\nn \ra \nn$.
Let $C$ be any compact subset of $X$.
Then there is a natural number $N$ such
that for all $p \in C$ the coarse distance
satisfies $\td_X(\ep,p) \le N$.
Let $D$ be the subset of $X$ consisting of
all cells $\sigma$ satisfying the property
that for all vertices $v \in \sigma\skz$,
$\td_X(\ep,p) \le \lceil f(N) \rceil$.
Then $D$ is a finite subcomplex of $X$, and hence
is also compact.  

Let $\tau$ be any 0- or 1-cell in $X$.
For any point $p$ in $\tau$, 
let $s_p \in [0,1]$ be the largest real
number such that $\td_X(\ep,\up(p,t)) \le f(N)$
for all $t \in [0,s_p]$.  Since $N \le f(N)$, we have
$\up^{-1}(C) \cap (\{p\} \times [0,1])
\subseteq \{p\} \times [0,s_p]
\subseteq
\up^{-1}(D) \cap (\{p\} \times [0,1])$.
The path-connected set 
$\{ \{p\} \times [0,s_p] \mid p \in \tau\}$
is contained in a path component
of $\up^{-1}(D) \cap (\tau \times [0,1])$.
\end{proof}




\section{Examples of tame filling functions}\label{sec:examples}



\subsection{Stackable groups}\label{subsec:stkbl}


$~$

\vspace{.1in}

In this section we give an inductive procedure for
constructing a \cnf, and give
bounds for intrinsic and extrinsic \tffs, for any \fstkbl\ group.

\begin{definition}\label{def:stkbl}~\cite{britherm}
A group $G$ is {\em stackable} over a symmetric generating set $A$
if there is a maximal tree $\ttt$ in the Cayley graph
$\ga=\ga(G,A)$ and a function $\ff:\vece_\ga \ra \vec P_\ga$
such that 
\begin{itemize}
\item[(F1)] For each edge $e \in \vece_\ga$,
the path $\ff(e)$ has the same initial and terminal
vertices as $e$.
\item[(F2d)] If the undirected edge underlying $e$ 
lies in the tree $\ttt$, then $\ff(e)=e$.
\item[(F2r)]  The transitive closure 
$<_\ff$ of the relation $<$ on
$\vec E_\ga$, defined by 
\begin{itemize}
\item[]
$e' < e$ whenever $e'$ lies on the path $\ff(e)$
and the undirected edges underlying both
$e$ and $e'$ do not lie in $T$,
\end{itemize}
is a well-founded strict
partial ordering.
\item[(F3)] There is a constant $k$ such that
for all $e \in \vece_\ga$ the path $\ff(e)$ has length at most $k$.
\end{itemize}
Further, $G$ is {\em \astkbl} if the subset
\[
S_\ff:=\{(w,a,\lbl_\ga(\ff(e_{\rep(w),a}))) \mid w \in A^*, a \in A\}
\]
of $A^* \times A \times A^*$ is recursive.
\end{definition}

A function $\ff$ satisfying (F1), (F2d), and (F2r) is
a {\em flow function}, and if $\ff$ also satisfies (F3)
it is {\em bounded}.

In~\cite{britherm}, the authors show that a group $G$
that is \stkbl\ over $A$ 
has a finite presentation $\pp_\ff=\langle A \mid R_\ff\rangle$
where $R_\ff:=\{\lbl(\ff(e))\lbl(e)^{-1} \mid e \in \vece_\ga\}$;
note that finiteness of the set $A$ of edge labels
and (F3) imply that $R_\ff$ is finite.
Let $X$ be the corresponding Cayley complex.
The directed edge $\ega$ from $g$ to $ga$ labeled $a$
will be called {\em degenerate} if (the
undirected edge underlying) $\ega$ lies in the tree
$\ttt$, and {\em recursive} otherwise.  We let $\dgd$ and
$\ves$ denote the sets of degenerate and recursive edges
of $X$, respectively.

In the following proof
that every \fstkbl\ group admits finite-valued
intrinsic and extrinsic \tffs, 
we modify the construction in~\cite{britherm}
of a collection of {\edg}s for a \stkbl\ group, in order
to give an inductive construction of a \cnf\ for
any \stkbl\ group.
The tameness step in the proof of Theorem~\ref{thm:stkblhastff} hinges
on the fact that a portion of
an \ehy\ path on a subset of $[0,t'] \subset [0,1]$
is also a \oc\ path for any subdiagram containing it;
this is essentially
a notion of ``prefix closure'' for the \ehs.
The recursive nature of the van Kampen diagrams
for \stkbl\ groups is key
to making this prefix closure possible.

\begin{theorem}\label{thm:stkblhastff}
If $G$ is a \fstkbl\ group,  
then $G$ admits well-defined intrinsic and extrinsic \tffs.
\end{theorem}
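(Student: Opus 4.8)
The plan is to construct, for the stackable group $G$ with presentation $\pp_\ff$, an explicit geodesic-free \cnf\ $\cle=\{(\dd_e,\tht_e) \mid e \in E_X\}$ and to show that its \ehs\ are $f$-tame (intrinsically) and that the projections $\pi_{\dd_e}\circ\tht_e$ are $g$-tame (extrinsically) for some finite-valued nondecreasing functions $f,g$; then Lemma~\ref{lem:edgeimpliesbdry} (or Proposition~\ref{prop:htpydomain}) immediately upgrades this to a \cfl\ whose \vks\ have the same tameness, giving the desired well-defined intrinsic and extrinsic \tffs. The construction of the van Kampen diagrams $\dd_e$ is the one from~\cite{britherm}: for a recursive edge $e=\ega$, the relator $\lbl(\ff(e))\lbl(e)^{-1}\in R_\ff$ bounds a single 2-cell $\sigma_e$, and one glues onto the $\ff(e)$-side of $\sigma_e$, by induction along the well-founded order $<_\ff$, the diagrams $\dd_{e'}$ for the recursive edges $e'$ occurring on the path $\ff(e)$; for degenerate edges one uses the trivial diagram. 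Since $G$ is stackable (not merely algorithmically so), (F2r) guarantees this induction terminates, so each $\dd_e$ is a genuine finite van Kampen diagram.

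The heart of the proof is defining the \ehy\ $\tht_e$ and tracking coarse distances. For a recursive edge $e$ from $g$ to $ga$, I would have $\tht_e$ first sweep out, over $t\in[0,\tfrac12]$ say, the 2-cell $\sigma_e$ from the basepoint-side path (labeled by a normal form $y_g$, which I take to be the shortlex or tree-geodesic normal form so that the hypothesis of the geodesic case of Lemma~\ref{lem:edgeimpliesbdry} is met) across $\sigma_e$ to the edge $\he$; and then, recursively, reparametrize the already-constructed \ehs\ $\tht_{e'}$ of the subdiagrams hanging off $\ff(e)$ so that the combing of each point of $\he$ is routed: basepoint $\to$ (along $\partial\dd_e$) to the relevant edge $e'$ of $\ff(e)$ $\to$ (via $\tht_{e'}$, suitably time-scaled) across $\dd_{e'}$ $\to$ across $\sigma_e$ to $\he$. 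The key structural observation, flagged in the paragraph before the theorem, is ``prefix closure'': a prefix (restriction to $[0,t']$) of an \ehy\ path in $\dd_{e'}$ is still a valid \oc\ path inside $\dd_e$, because $\dd_{e'}$ sits as a subdiagram of $\dd_e$ and the gluing map preserves (intrinsic, in the geodesic case, and always extrinsic) coarse distances — exactly as established in the seashell argument inside the proof of Lemma~\ref{lem:edgeimpliesbdry}. Then, letting $\maxr$ be the maximal relator length (so $\le 2k$ where $k$ is the bound from (F3)) and $\maxp$ the maximal length of $\ff(e)$, the depth of recursion contributes additively, and a coarse point passed at time $s$ lies within a bounded ``overshoot'' of where the path is at any later time; one extracts a nondecreasing $f$ from this bounded overshoot plus the fact that once the combing enters $\sigma_e$ it is within $\maxr$ of $\he$. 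Crucially, because the van Kampen diagrams are fixed once and for all (independent of which boundary word $w$ they will later be assembled into), the resulting $f$ is the same for all $e$, hence finite-valued.

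The main obstacle I expect is bookkeeping the recursion so that tameness is genuinely uniform rather than depending on the depth of the $<_\ff$-tree, which is unbounded over all of $G$. The resolution is that the combing $\tht_e$ of a point of $\he$ does \emph{not} traverse the full recursive subdiagram at full depth: it only needs to reach from the basepoint out to $\he$, and along that route the coarse distance is essentially monotone (it increases as we move along $\partial\dd_e$ toward the ``far'' side and then is bounded while crossing the single 2-cell $\sigma_e$), so the non-monotone ``return'' behavior that tameness controls is confined to the fixed-size pieces $\sigma_e$ and the boundary normal-form paths, not to the deep recursion. Making this precise — i.e.\ verifying that $\td_{\dd_e}(*,\tht_e(p,s))\le f(\td_{\dd_e}(*,\tht_e(p,t)))$ for $s<t$ with $f$ depending only on $k$ and $|R_\ff|$ — is the technical crux; once it is done, the extrinsic statement follows either by the analogous estimate using $\td_X$ (projections preserve/decrease coarse distance) or, more cheaply, via Lemma~\ref{lem:itversuset}(1) applied in reverse together with the extrinsic half of Lemma~\ref{lem:edgeimpliesbdry}, and the theorem is complete.
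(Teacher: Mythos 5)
Your construction of the diagrams $\dd_e$ and the {\ehs} $\tht_e$ is essentially the paper's: Noetherian induction along $<_\ff$, seashell gluing of the subdiagrams attached to the edges of $\ff(e)$, one new 2-cell, and a combing routed from the basepoint through the subdiagram and then across that 2-cell to $\he$. The gap is in the tameness analysis. You claim that along a combing path the coarse distance is ``essentially monotone,'' that the non-monotone behavior is confined to the fixed-size pieces $\sigma_e$ and the normal-form boundary paths, and that one can extract a tame filling function depending only on $k$ and $|R_\ff|$. This is false, and provably so: the iterated Baumslag--Solitar groups $G_k$ of Section~\ref{subsec:rs} are \stkbl\ (they admit finite complete rewriting systems), yet Gersten's lower bound shows every \tff\ for $G_k$ must grow faster than a $(k-2)$-fold tower of exponentials; a bound determined by $k$ and $|R_\ff|$ alone would give these groups a linear \tff. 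The combing of a point of $\he$ really does traverse the recursive subdiagrams at full depth: to reach a point $q$ on the $\tc_e$-path it follows $\tht_{e_i}$ all the way through $\dd_{e_i}$, and inside $\dd_{e_i}$ the path can climb to coarse distance as large as $\idi(\dd_{e_i})$ before descending back to $q$. That diameter is finite, but it is in no way controlled by the presentation constants.

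The correct resolution --- and the actual content of Step 2 of the paper's proof --- is to introduce $\kti(n)$ (the maximal length of a tree normal form of an element of $B(n)$) and $\kxi(n)$ (the maximal intrinsic diameter of $\dd_{e'}$ over recursive edges $e'$ with $\init(e') \in B(n)$), with extrinsic analogues. These are finite-valued because balls in the Cayley graph are finite and each recursively built $\dd_{e'}$ is a finite complex, and they are exactly what bounds the overshoot. Your prefix-closure observation is then used as follows: if at time $t$ the combing sits over an edge of a subdiagram $\dd_\pe$, the entire earlier portion of the path is trapped inside $\dd_\pe$, so its coarse distance is at most $\idi(\dd_\pe)+1 \le \kxi(d_X(\ep,g))+1$ where $g=\init(\pe)$ lies within $\maxr+1$ of the current position; the resulting \tff\ is $\mui(n)=\max\{\kti(\lceil n\rceil+1)+1,\ n+1,\ \kxi(\lceil n\rceil+\maxr+1)+1\}$, which is well defined but need not be linear. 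A secondary point: you cannot ``take'' the normal forms to be shortlex or geodesic --- they are the labels of geodesics in the maximal tree $\ttt$ of the stacking structure and need not be geodesic in the Cayley graph, so $\kti(n)$ can genuinely exceed $n$ and the geodesic case of Lemma~\ref{lem:edgeimpliesbdry} is not automatically available; likewise the extrinsic case should be handled by the direct estimate (projection does not increase coarse distance), not by ``reversing'' Lemma~\ref{lem:itversuset}.
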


\begin{proof}
Let $\ttt,\ff$ be a maximal tree and
bounded flow function for the \stkbl\ group
$G$ over $A$, and let $X$ be the Cayley
complex for the finite presentation 
$\pp_\ff=\langle A \mid R_\ff\rangle$.
Define the subset $\cc=\cc_\ttt$ of $A^*$ to be the
set of all words that label a geodesic path
(i.e., with no backtracking) in the tree $\ttt$ starting
at the vertex $\ep$.  (As usual, let $y_g$ be
the label of the path ending at the vertex 
labeled by $g \in G$.)
Then $\cc$ is a prefix-closed
set of normal forms for $G$ over $A$, and all of the
words in $\cc$ label simple paths in $X$.

We proceed in two steps, constructing a \cnf\ for
$G$ over $\pp$, and analyzing the tameness of the
associated \ehs.

\smallskip

\noindent {\em Step 1:  Inductive construction of the \cnf.}

\smallskip

In this step we will construct an \edg\ $\dd_e$ and
\ehy\ $\tht_e$ for every directed edge $e$, and at the end,
for each undirected edge we will choose one pair $(\dd_e,\tht_e)$
to include in the \cnf.
Let $e=\ega$ be a directed edge in $X$, and let 
$w_e:=y_g a y_{ga}^{-1}$.  

{\em Degenerate case.}
Suppose that $e \in \dgd$.  Then the
word $w_e$ freely reduces to the empty
word.  Let $\dd_e$ be the van Kampen diagram for 
$w_e$ consisting of a line segment of edges,
with no 2-cells.
Let $\he=\path_{\dd_e}(y_g,a)$ be the edge of $\bo \dd_e$
corresponding to $a$ in the factorization of $w_e$, and
define the \ehy\  $\tht_e:\he \times [0,1] \ra \dd_e$
by taking $\tht_e(p,\cdot):[0,1] \ra \dd_e$ 
to follow the
shortest length (i.e. geodesic with respect to
the path metric) path from the basepoint $*$ to $p$
at a constant speed,
for each point $p$ in $\he$.
See Figure~\ref{fig:eintree}.
\begin{figure}
\begin{center}
\includegraphics[width=2.3in,height=0.7in]{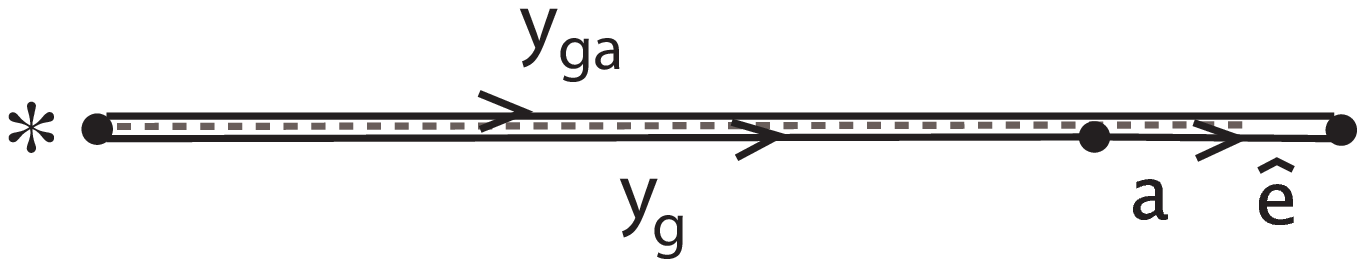}
\hspace{.2in}
\includegraphics[width=2.3in,height=0.7in]{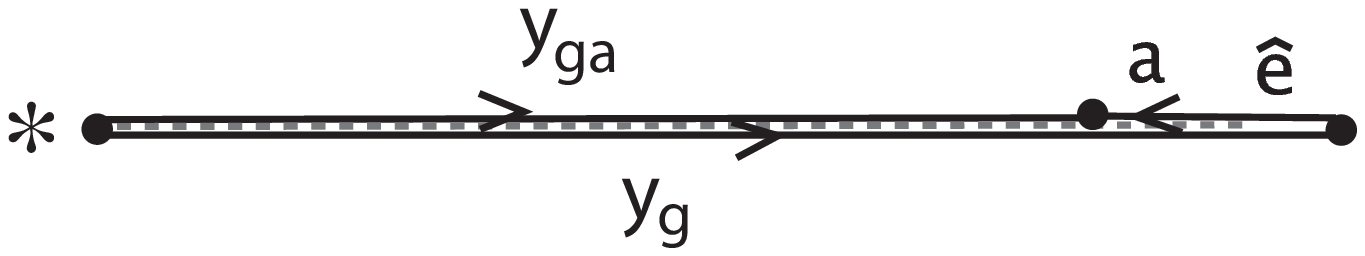}
\caption{$(\dd_e,\tht_e)$ for 
degenerate edge $e$}\label{fig:eintree}
\end{center}
\end{figure}

{\em Recursive case.}
Now suppose that $e \in \ves$.  In this case
will use Noetherian induction
to construct the \edg, using the well-founded
strict partial ordering $<_\ff$ from Definition~\ref{def:stkbl}.
We note that since there are only finitely many
recursive edges in each path $\ff(e'')$, 
it follows that there are at most finitely many
edges $e'$ satisfying $e'<_\ff e$.
Our induction assumption
will be that for all recursive edges $e'<_\ff e$,
we have a \edg\ $\dd_{e'}$ with an \ehy\ $\tht_{e'}$
satisfying the
property that the paths $\tht_{e'}(p,\cdot)$ following
$\bo\dd_{e'}$ from $*$ to each endpoint $p$ of 
$\hat{e'}$
are parametrized to follow these paths at constant speed.

Let $h:=ga$ and
factor the word $\lbl(\ff(e))=x_g^{-1} \tc_e x_h$ such that 
$x_g$ is a suffix of $y_g$ and $x_h$ is a suffix of $y_h$.
Then $y_g=y_q x_g$ and $y_h=y_r x_h$
for some elements $q:=_G gx_g^{-1}$ and $r:=_G hx_h^{-1}$ of $G$,
and the directed edges in the paths $\path_X(g,x_g^{-1})$
and $\path_X(gx_g^{-1}\tc_e,x_h)$ in $X$ are all degenerate.
If the word $\tc_e$ is
nonempty, then write
$\tc_e:=a_1 \cdots a_k$ with each $a_i$ in $A$.
Let $e_i$ be the directed edge in $X$ from $qa_1 \cdots a_{i-1}$
to $qa_1 \cdots a_i$ labeled by $a_i$.
Either  $e_i$ is
in $\dgd$, or else $e_i \in \ves$ and 
$e_i <_\ff e$; in both cases we
have, by above or by Noetherian induction,
a van Kampen diagram 
$\dd_i:=
\dd_{e_i}$ with boundary label
$y_{qa_1 \cdots a_{i-1}} a_i y_{qa_1 \cdots a_i}^{-1}$.
By using the ``seashell'' method from the proof of 
Lemma~\ref{lem:edgeimpliesbdry},
we successively 
glue the diagrams  
$\dd_{i-1}$, $\dd_i$ 
along their
common boundary words $y_{qa_1 \cdots a_{i-1}}$.
Since all of these gluings are along simple paths,
and the parametrizations of $\tht_{e_{i-1}}$
and $\tht_i:=\tht_{e_i}$ are 
consistent on their common endpoint,
this results in a planar van Kampen diagram $\dd_e'$ with boundary
word $y_q \tc_e y_{r}^{-1}$, and
a \oc\ $\tht_e'$ of the pair $(\dd_e',Z_e')$ based at $*$
where the subcomplex $Z_e'$ is the set of cells
underlying the path $p_e':=\path_{\dd_e'}(y_q,\tc_e)$
in $\bo\dd_e'$ labeled by $\tc_e$.  Moreover, the
\oc\ paths to the endpoints of $p_e'$ again travel along
$\bo\dd_e'$ following the words $y_q,y_r$, respectively,
at constant speed.
If the path $\tc_e$ is empty, then $q=_G r$ and we
define $\dd_e'$ to be a simple edge
path from a basepoint labeled by the word
$y_{q}$ (i.e., the van Kampen diagram for the
word $y_qy_q^{-1}$ with no 2-cells), 
and let $\tht_e'$ be a \oc\ of the pair 
$(\dd_e',\{v_e'\})$ where $v_e'$ is the terminal vertex
$\term(\path_{\dd_e'}(1,y_q))$, such that
$\tht_e'(v_e',\cdot)$ follows this path
at constant speed.

Next we glue
a single (polygonal) 2-cell $\hc_e$ with boundary label 
$x_g^{-1} \tc_e x_h a^{-1}$
onto $\dd_e'$, along the $\tc_e$ subpath in $\bo \dd_e'$,
to produce $\dd_e$.
If the word $\tc_e$ is empty,
then we glue the vertex $v_e'$ to the vertex of $\bo \hc_e$
separating the $x_g^{-1}$ and $x_h$ subpaths.
It follows from this construction that the diagram $\dd_e'$ 
and the
cell $\hc_e$ can be considered to be subsets of $\dd_e$.
Since  we have glued a disk onto $\dd_e'$
along an arc, the diagram $\dd_e$ is again planar, and is a \edg\ 
corresponding to $e$,  with boundary word $w_e$.  
See Figure~\ref{fig:rnfebad}.

Let $\he:=\path_{\dd_e}(y_g,a)$ be the directed edge 
in $\bo \dd_e$ from 
vertex $\hat g$ to vertex $\hat h$ corresponding
to $a$ in the factorization of $w_e$.
Let $\hat q$ and $\hat r$ be the initial and terminal
vertices, respectively, of the
2-cell $\hc_e$ at the start and end, respectively,
of the path in $\bo \hc_e$
labeled by $\tc_e$.  
Let $J:\he \ra [0,1]$ be a homeomorphism, with
$J(\hat g)=0$ and $J(\hat h)=1$.
Since $\hc_e$ is a disk, there is
a continuous function $\Xi_e:\he \times [0,1] \ra \hc_e$
such that: (i) For each $p$ in the interior $Int(\he)$, we have
$\Xi_e(p,(0,1)) \subseteq Int(\hc_e)$ and $\Xi_e(p,1)=p$.
(ii) $\Xi_e(J^{-1}(\cdot),0):[0,1] \ra \hc_e$ follows the path
in $\bo \hc_e$ labeled $\tc_e$ from $\hat q$ to $\hat r$
at constant speed.
(iii) $\Xi_e(\hat g,\cdot)$ follows the path
in $\bo \hc_e$ labeled $x_g$ from $\hat q$ to $\hat g$
at constant speed.  (iv) $\Xi_e(\hat h,\cdot)$ follows the path
in $\bo \hc_e$ labeled $x_h$ from $\hat r$ to $\hat h$
at constant speed.  
Let $l_g$, $m_g$, $l_h$, and $m_h$ be the lengths 
of the words $y_q$, $x_g$, $y_r$, and $x_h$ in $A^*$,
respectively.
Now define $\tht_e:\he \times [0,1] \ra \dd_e$ by 
\[ \tht_e(p,t):= \left\{
  \begin{array}{ll}
  \tht_e'(\Xi_e(p,0),\frac{1}{a_p}t)          
        & \mbox{if $t \in [0,a_p]$} \\
  \Xi_e(p,\frac{1}{1-a_p}(t-a_p))          
        & \mbox{if $t \in [a_p,1]$} 
  \end{array} \right.
\]
where
\[ a_p:= \left\{
  \begin{array}{ll}
  \frac{2l_g}{l_g+m_g}(\frac{1}{2}-J(p))+J(p)          
        & \mbox{if $J(p) \in [0,\frac{1}{2}]$} \\
  (1-J(p))+\frac{2l_h}{l_h+m_h}(J(p)-\frac{1}{2})          
        & \mbox{if $J(p) \in [\frac{1}{2},1]$} 
  \end{array} \right.
\]
Note that if $a_p=0$
and $J(p) \in [0,\frac{1}{2}]$, then we must also
have $J(p)=0$ and $l_g=0$.  In this case $p=\hat g$
and $y_q$ is the empty word, and so 
$\tht_i(\Xi_e(p,0),\cdot)=\tht_i(\hat q,\cdot)$ is
a constant path at the basepoint $*$ of $\dd_e$;
hence $\tht_e$ is well-defined in this case.  
The other instances in which $a_p$
can equal 0 or 1 are similar.
(The complication in this definition of $\tht_e$ stems from
the need to have consistent parametrizations,
in order to satisfy the gluing condition of
the definition of \cnf.)
The map $\tht_e$ is an \ehy\ with the required 
parametrization property.

The van Kampen diagram $\dd_e$ and \ehy\ 
$\tht_e$ are illustrated in Figure~\ref{fig:rnfebad}.
\begin{figure}
\begin{center}
\includegraphics[width=3.8in,height=1.3in]{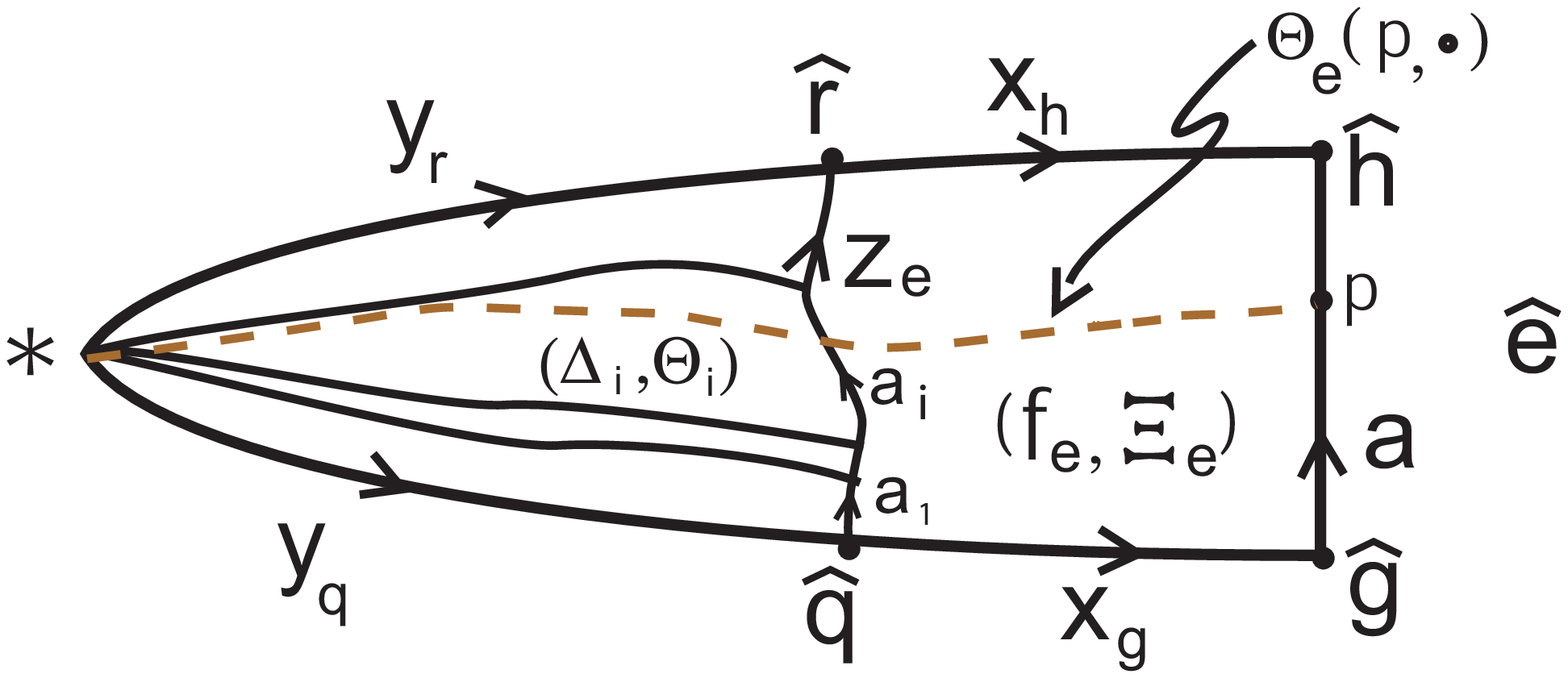}
\caption{$(\dd_e,\tht_e)$ for 
recursive edge $e$}\label{fig:rnfebad}
\end{center}
\end{figure}
We now have a collection 
$\cle:=\{(\dd_e,\tht_e) \mid e \in \vec E_X\}$
of van Kampen diagrams
and \ehs\  for the elements of $\vec E_X$.
To obtain the \cnf\ associated to
the flow function $\ff$, 
the final step again is to eliminate repetitions;
given any undirected edge $e$ in $E_X$, 
let $(\dd_,\tht_e)$ be a \edg\ and \ehy\ constructed
above for one of the orientations of $e$.
Then the collection $\cc$ of prefix-closed normal forms
from the tree $\ttt$,
together with this collection 
$\cle':=\{(\dd_e,\tht_e) \mid e \in E_X\}$
of van Kampen diagrams and \ehs, is a \cnf\ for $G$.

We remark that the {\edg}s of $\cle$ 
satisfy a further property which
we will find useful to refer back to:
 
\begin{itemize}
\item[($\dagger$)]\label{rmk:vkdhasnf}
{\em For every van Kampen diagram $\dd_e$ of $\cle$ and 
every vertex $v$ in $\dd_e$,
there is an edge path in $\dd_e$ from the basepoint $*$ to $v$
labeled by the normal form in $\cc$ for the element
$\pi_{\dd_e}(v)$ in $G$. } 
\end{itemize}

\noindent {\em Step 2:  Analyzing the tameness of the \cnf.}

\smallskip

In this step we analyze the tameness of all of the \ehs\ 
in the set
$\cle$ containing the \cnf\ from Step 1.

For each \edg\ $\dd_e$ of $\cle$, 
define the 
{\em intrinsic diameter} 
$\idi(\dd_e) := \max\{d_{\dd_e}(\ep,v) \mid v \in \dd_e\skz \}$ and
{\em extrinsic diameter} 
$\edi(\dd_e) := \max\{d_X(\ep,\pi_{\dd_e}(v)) \mid v \in \dd_e\skz \}$.
Let $B(n)$ be the ball of radius $n$ (with respect to
path metric) in the Cayley
graph $\xx$ centered at $\ep$.
Define the functions 
$\kti, \kte, \kxi, \kxe:\N \ra \N$ by
\begin{eqnarray*}
\kti(n) &:=& \max\{l(y_g) \mid g \in B(n)\}~,\\
\kte(n) &:=& \max\{d_X(\ep,x) \mid \exists~g \in B(n) \text{ such that } 
      x \text{ is a prefix of } y_g \}~,\\
\kxi(n) &:=& \max\{\idi(\dd_e) \mid e \in \ves
              \text{ and the initial vertex } \init(e) \text{ is in } B(n)\}~,\\
\kxe (n) &:=& \max\{\edi(\dd_e) \mid e \in \ves 
              \text{ and the initial vertex } \init(e) \text{ is in } B(n)\}~. 
\end{eqnarray*}
(We do not assume that prefixes are proper.)
Finally, define $\mui,\mue:\nn \ra \nn$ by
\begin{eqnarray*}
\mui(n) &:=& \max \{\kti(\lceil n \rceil +  1) + 1, n+1, 
         \kxi(\lceil n \rceil + \maxr+1)+1\} \text{ and} \\
\mue(n) &:=& \max \{\kte(\lceil n \rceil +  1) + 1, n+1, 
         \kxe(\lceil n \rceil + \maxr+1)+1\}~,
\end{eqnarray*}
where $\maxr$ is the length of the longest relator
in the  presentation $\pp_\ff$.  
It follows directly from the definitions that $\kti,\kte,\kxi,\kxe$ 
are well-defined nondecreasing functions, 
and therefore so are $\mui$ and $\mue$. 

In this Step 2 we will show that $\mui$ and $\mue$ are
intrinsic and extrinsic \tffs, respectively, for $G$ 
with respect to $\pp_\ff$.  From Lemma~\ref{lem:edgeimpliesbdry},
it suffices to show that each \ehy\ $\tht_e$ 
from $\cle$ is $\mui$-tame,
and each $\pi_{\dd_e} \circ \tht_e$ is $\mue$-tame.

Let $(\dd_e,\tht_e)$ be any element of $\cle$, 
with $\tht_e:\he \times [0,1] \ra \dd_e$.
Let $p$ be any point in the edge $\he$, and let
$0 \le s<t \le 1$.
To simplify notation later, we also 
let $\sigma:=\tht_e(p,s)$ and $\tau:=\tht_e(p,t)$.
If $\tau$ is in the 1-skeleton $\dd_e\sko$, then
let $\tau':=\tau$ and $t':=t$.  Otherwise, $\tau$
is in the interior of a 2-cell, and there is a
$t \le t' \le 1$ such that $\tht_e(p,[t,t'))$ is
contained in that open 2-cell, and 
$\tau':=\tht_e(p,t') \in \dd_e\sko$; moreover,
from the construction of the {\oc}s in Step 1,
in this case we also have $\tau' \notin \dd_e\skz$.

\smallskip

{\em Case I.  $\tau' \in \dd_e\skz $ is a vertex.}
In this case $\tau'=\tau$ and 
the path $\tht_e(p,\cdot):[0,t'] \ra \dd_e$
follows an edge path labeled by the normal
form $y_{\pi_{\dd_e}(\tau')}$ from $*$, 
through $\sigma$, to $\tau$ (at constant speed).  
There is a vertex $\sigma'$ on this path lying on the
same edge as $\sigma$ (with $\sigma'=\sigma$ if $\sigma$ is
a vertex) satisfying
$\td_{\dd_e}(*,\sigma) < d_{\dd_e}(*,\sigma')+1$
and $\td_{X}(\ep,\pi_{\dd_e}(\sigma)) < d_{X}(\ep,\pi_{\dd_e}(\sigma'))+1$.
The subpath from $*$ to $\sigma'$ is labeled by a prefix $x$ of 
the word $y_{\pi_{\dd_e}(\tau)}$.
Then 
\[
\td_{\dd_e}(*,\sigma) < d_{\dd_e}(*,\sigma')+1 \le
l(y_{\pi_{\dd_e}(\tau)})+1 \le \kti(d_X(\ep,\pi_{\dd_e}(\tau)))+1
\le \kti(d_{\dd_e}(*,\tau))+1
\]
\[
\text{ and   }~~
 \td_{X}(\ep,\pi_{\dd_e}(\sigma)) 
< d_{X}(\ep,\pi_{\dd_e}(\sigma'))+1 \le
\kte(d_X(\ep,\pi_{\dd_e}(\tau)))+1~.
\]

\smallskip

{\em Case II. $\tau'$ is in the interior of an edge $\te$ of $\dd_e$.} 
From the construction of $\cle$ in Step 1,
the subpath $\tht_e(p,\cdot):[0,t'] \ra \dd_e$
lies in a subdiagram $\dd_\pe$ of $\dd_e$ for the
pair  $(\dd_\pe,\tht_\pe) \in \cle$ associated to
a directed edge $\pe \in \vec E_X$ such that
$\te$ is the edge of $\bo \dd_\pe$ corresponding to $\pe$. 
Then the path $\tht_e(p,\cdot):[0,t'] \ra \dd_e$ is
a bijective (orientation preserving)
reparametrization of the path $\tht_\pe(\tau',\cdot):[0,1] \ra \dd_\pe$.

\smallskip

{\em Case IIA.  $\pe$ is degenerate.} 
The van Kampen diagram $\dd_\pe$ contains no 2-cells, and
the path $\tht_\pe(\tau',\cdot):[0,1] \ra \dd_\pe$
follows the edge path labeled by a normal form
$y_g \in \cc$ from $*$ to 
$\hat g$ (at constant speed), 
and then follows the portion of $\te$ from $\hat g$ to $\tau'$,
where $\hat g$ is the endpoint of $\te$
closest to $*$ in the diagram $\dd_\pe$.
In this case, $\tau$ must also lie in $\dd_e\sko$, and
so again we have $\tau=\tau'$.
Since $\hat g$ and $\tau$ lie in the same closed 1-cell,
we have $d_{\dd_e}(*,\hat g)< \lceil \td_{\dd_e}(*,\tau) \rceil +1$,
and similarly for their images (via $\pi_{\dd_e}$) lying in the same
closed edge of $X$.

If $\sigma$ lies in the $y_{g}$ path, then 
Case I applies to that path, with $\tau$ replaced by the 
vertex $\hat g$.  Combining
this with the inequality above and applying the nondecreasing
property of the functions $\kti$ and $\kte$ yields
\[
\td_{\dd_e}(*,\sigma) <  \kti(d_{\dd_e}(*,\hat g))+1
\le \kti(\lceil \td_{\dd_e}(*,\tau) \rceil+1)+1
\text{ and }
\]
\[
 \td_{X}(\ep,\pi_{\dd_e}(\sigma)) < \kte(d_X(\ep,\pi_{\dd_e}(\hat g)))+1
\le \kte(\lceil \td_X(\ep,\pi_{\dd_e}(\tau)) \rceil +1)+1~.
\]

On the other hand, if $\sigma$ lies in $\te$, 
then $\sigma$ and $\tau$ are contained in a common edge.
Hence 
\[
\td_{\dd_e}(*,\sigma) <  \td_{\dd_e}(*,\tau) +1
\text{  and  } \td_{X}(\ep,\pi_{\dd_e}(\sigma)) <  
\td_X(\ep,\pi_{\dd_e}(\tau)) +1~.
\]

\smallskip

{\em Case IIB. $\pe$ is recursive.}  
In this case either $\tau=\tau'$, or $\tau$ is
in the interior of the cell $\hc_\pe$ of the
diagram $\dd_\pe$ from the construction in Step 1.  
Let $g$ be the initial vertex 
of the directed edge $\pe \in \vec E_X$.
Then $g$ and $\pi_{\dd_e}(\tau)$ lie in a common
edge or 2-cell of $X$, and so
$d_{X}(\ep,g) < \lceil \td_{X}(\ep,\pi_{\dd_e}(\tau)) \rceil +\maxr+1$.

Note that distances in the  subdiagram $\dd_\pe$
are bounded below by distances in $\dd_e$.
Also, the map $\pi_{\dd_e}$ cannot increase distances.
In this case, combining these inequalities and the
nondecreasing properties of $\kxi$ and $\kxe$ yields
\begin{eqnarray*}
\td_{\dd_e}(*,\sigma) &\le&  \td_{\dd_\pe}(*,\sigma)
\le \idi(\dd_\pe)+1 \le \kxi(d_{X}(\ep,g))+1 \\
&\le&  \kxi(\lceil d_{X}(\ep,\pi_{\dd_e}(\tau)) \rceil +\maxr+1)+1
\le \kxi(\lceil d_{\dd_e}(*,\tau)) \rceil +\maxr+1)+1
\text{ and } \\
 \td_{X}(\ep,\pi_{\dd_e}(\sigma)) 
&=&\td_{X}(\ep,\pi_{\dd_\pe}(\sigma)) 
\le \edi(\dd_\pe)+1 \le \kxe(d_{X}(\ep,g))+1 \\
&\le& \kxe(\lceil \td_{X}(*,\pi_{\dd_e}(\tau)) \rceil +\maxr+1)+1~.
\end{eqnarray*}


Therefore in all cases, we have
$\td_{\dd_e}(*,\sigma) \le \mui(\td_{\dd_e}(*,\tau))$
and $\td_{X}(\ep,\pi_{\dd_e}(\sigma)) \le \mue(\td_{X}(\ep,\pi_{\dd_e}(\tau)))$,
as required.
\end{proof}

The \tff\ 
bounds in Theorem~\ref{thm:stkblhastff} are not sharp in general.
In particular, we will improve upon these bounds for the example of
almost convex groups in Section~\ref{subsec:linear}.

\begin{definition}\label{def:rcnf}
A {\em \rcnf} is a \cnf\ that can be
constructed from a bounded flow function  by the
procedure in Step 1 of the proof of Theorem~\ref{thm:stkblhastff}. 
A {\em \rcf} is a \cfl\ induced
by a \rcnf\ using the seashell procedure.
\end{definition}


The procedure 
described in Step 1 of the proof of
Theorem~\ref{thm:stkblhastff} for building 
van Kampen diagrams for a stackable group
using the bounded flow function
may not be an algorithm.  However, in~\cite{britherm},
we show that for \afstkbl\ groups this process is
algorithmic, and hence the word problem is
solvable for \afstkbl\ groups.

\begin{theorem}\label{thm:astktf}
If $G$ is an \afstkbl\ group, then $G$ has recursive
intrinsic and extrinsic \tffs.
\end{theorem}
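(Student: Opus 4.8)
The plan is to argue that the two functions $\mui$ and $\mue$ produced in the proof of Theorem~\ref{thm:stkblhastff} are recursive when $G$ is \afstkbl; since that proof shows $\mui$ and $\mue$ are an intrinsic and an extrinsic \tff\ for $G$ over $\pp_\ff$, this will be enough. Recall that $\mui$ and $\mue$ are given by explicit $\max$-formulas in terms of the four nondecreasing functions $\kti,\kte,\kxi,\kxe$ and the constant $\maxr$. So the task reduces to showing that each of these ingredients is computable from the recursive set $S_\ff=\{(w,a,\lbl(\ff(e_{\rep(w),a}))) \mid w \in A^*,\ a\in A\}$.

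First I would assemble the effective data available from $S_\ff$. For any $w\in A^*$ and $a\in A$ the label $\lbl(\ff(e_{\rep(w),a}))$ can be read from $S_\ff$, and by (F2d) together with irreflexivity of $<_\ff$ the directed edge $e_{\rep(w),a}$ is degenerate exactly when that label is the one-letter word $a$; hence the tree $\ttt$ --- and therefore the presentation $\pp_\ff$ (so the constant $\maxr$) and the prefix-closed normal form set $\cc=\cc_\ttt$ --- is computable. Also, by~\cite{britherm} an \afstkbl\ group has solvable word problem, so for each $n\in\N$ the ball $B(n)$ can be produced as an explicit finite list of elements (for instance via their normal forms) by enumerating words of length at most $n$ and merging those representing the same element. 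Consequently $\kti(n)=\max\{l(y_g)\mid g\in B(n)\}$ is computable, and likewise $\kte(n)$, which is the maximum of the Cayley-graph distances $d_X(\ep,x)$ over the finitely many prefixes $x$ of the normal forms $y_g$, $g\in B(n)$; here each $d_X(\ep,x)$ is computable because the word problem is solvable.

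It remains to handle $\kxi$ and $\kxe$. These are maxima, over the finitely many recursive edges $e$ with $\init(e)\in B(n)$, of $\idi(\dd_e)$ and $\edi(\dd_e)$ for the van Kampen diagrams $\dd_e$ built in Step~1 of the proof of Theorem~\ref{thm:stkblhastff}. For an \afstkbl\ group that Step~1 construction is effective: it is assembled from exactly the same ingredients (the recursive Noetherian procedure underlying the algorithmic van Kampen diagrams of~\cite{britherm}, the computable tree $\ttt$, and the seashell gluings along simple normal-form paths, capped by a single explicit 2-cell $\hc_e$), so one can produce $\dd_e$ as an explicit finite $2$-complex together with the map recording, for each vertex, the element of $G$ it represents. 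From this finite data $\idi(\dd_e)=\max_v d_{\dd_e}(*,v)$ is a finite graph computation and $\edi(\dd_e)=\max_v d_X(\ep,\pi_{\dd_e}(v))$ is computable (again using solvability of the word problem); taking the maximum over the computably listable set of relevant edges $e$ shows $\kxi,\kxe$ are recursive. Substituting everything into the definitions of $\mui$ and $\mue$ then exhibits them as recursive functions, which finishes the proof.

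The main obstacle I anticipate is verifying that every step of the Step~1 construction in Theorem~\ref{thm:stkblhastff} is genuinely carried out by an algorithm --- in particular that the Noetherian recursion on $<_\ff$ terminates (this is exactly the point where stackability alone does not suffice and one must invoke the algorithmic analysis of~\cite{britherm}), and that the recursive bookkeeping retains enough information (the combinatorial diagram together with the vertex-labelling into $X$) to let one read off both $\idi(\dd_e)$ and $\edi(\dd_e)$ afterwards.
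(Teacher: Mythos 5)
Your proposal is correct and follows essentially the same route as the paper: both arguments reduce to showing that $\kti,\kte,\kxi,\kxe$ are recursive (the paper only bounds them above by recursive functions, which suffices since any function dominating a \tff\ is again a \tff), using the recursiveness of $S_\ff$ to detect degenerate edges and read off $\lbl(\ff(e))$, the solvable word problem to compute normal forms and distances, and the termination of the Noetherian Step~1 construction. The one minor difference is that the paper never computes $\idi(\dd_e)$ exactly: it only records the set $L_e$ of normal forms of the vertices of $\dd_e$ and invokes property~($\dagger$) to bound $\idi(\dd_e)$ by the maximal length occurring in $L_e$, whereas you rebuild $\dd_e$ as an explicit finite complex and run a graph search --- both are valid.
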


\begin{proof}
Note that
whenever a function $f:\nn \ra \nn$ is a \tff\ for
a group $G$, and $g:\nn \ra \nn$
satisfies the property that $f(n) \le g(n)$
for all $n \in \nn$, then $g$ is also a \tff\ for $G$
(with respect to the same intrinsic or extrinsic
property).
From Step 2 of the proof of 
Theorem~\ref{thm:stkblhastff}, it suffices to show that the functions
$\kti$, $\kte$, $\kxi$, and $\kxe$ are 
bounded above by recursive functions.
Moreover, since distances in the Cayley complex $X$ are
bounded above by distances in van Kampen diagrams,
we have $\kte(n) \le \max\{l(x) \mid x \text{ is a prefix of }
y_g$ for some $g \in B(n)\} = \kti(n)$ and $\kxe(n) \le \kxi(n)$
for all $n$, so it suffices to find recursive upper bounds
for $\kti$ and $\kxi$.

The set $\cc=\cc_\ttt$ of normal forms can be
written 
\[ \cc=\{a_1 \cdots a_n \mid \forall~i,~a_i \in A
\text{ and }
(a_1 \cdots a_{i-1},a_i,a_i) \in S_\ff\},
\]
and hence $\cc$ is recursive.
Given a word $w \in A^*$, by enumerating the set
of words $z \in \cc$ and applying the word problem
solution to determine whether or not $zw^{-1}=_G \ep$,
we can compute the normal form $y_w$ of $w$.
By enumerating the finite set of words over $A$ of length
up to $n$,
computing their normal forms in $\cc$
and taking the maximum word length that
occurs, we obtain $\kti(n)$.  Hence the function
$\kti$ is computable.

Given $w \in A^*$ and $a \in A$, we compute 
the two words $y_w$ and $y_{wa}$ and store them
in a set $L_e$.
Next we follow the construction of 
the \edg\ $\dd_e$
associated to the edge $e=e_{w,a} \in \vec E_X$
from Step 1 of the proof of Theorem~\ref{thm:stkblhastff}.
If $(w,a,a) \in \alg$,
then $e \in \dgd$ and we add no
other words to $L_e$.  On the other hand, if 
$(w,a,a) \notin \alg$,
then $e \in \ves$.  In the latter case,
by enumerating the finitely many words $x$ of length up to $k$,
where $k$ is the bound on the flow function,
and checking whether or not $(w,a,x)$ lies in the computable
set $\alg$, 
we can determine the word $\lbl(\ff(e))=x$.
Write $x=a_1 \cdots a_n$ with each $a_i \in A$.
For $1 \le i \le n$, we compute the normal forms $y_i$ in $\cc$
for the words $wa_1 \cdots a_i$, and add these words to
the set $L_e$.  For each pair $(y_{i-1},a_i)$, we determine 
the word $x_i$ such that
$(y_i,a_i,x_i) \in \alg$. 
If $x_i \neq a_i$, we write $x_i=b_1 \cdots b_m$ 
with each $b_j \in A$, and add the normal forms
for the words $y_{i-1}b_1 \cdots b_j$ to $L_e$ for each $j$.
Repeating this process through all of the
steps in the construction of $\dd_e$, we must, after finitely many
steps, have no more words to add to $L_e$. 
The set $L_e$ now contains the normal form
$y_{\pi_{\dd_e}(v)}$ for each vertex $v$ of the
diagram $\dd_e$.
Then $k(w,a):=\max\{l(y) \mid y \in L_e\}$ is computable.

Now 
property~($\dagger$) says that for 
each vertex $v$ of the \edg\ $\dd_e$ there is
a path in $\dd_e$ from the basepoint to $v$
labeled by a word in the set $L_e$.
Then $\idi(\dd_e) \le k(w,a)$.
%
%
That is, $\kxi(n) \le k_r'(n)$ for all $n \in \N$, where
\begin{eqnarray*}
k_r'(n) &:=& \max \{k(w,a) \mid w \in \cup_{i=0}^n A^i, a \in A
\}.
\end{eqnarray*}
Repeating the computation of  $k(w,a)$
above for all words $w$ of length at most $n$
and all $a \in A$, we can compute
this upper bound $k_r'$ for $\kxi$, as required.
\end{proof}

\begin{remark}
{\em In~\cite{britherm}, we show that the fundamental group
of every three manifold with uniform geometry is \astkbl.
Theorem~\ref{thm:astktf} then gives bounds on the filling
functions for these groups, and Corollary~\ref{cor:tamecomb}
gives another proof that these groups are tame combable.}
\end{remark}




\subsection{Groups admitting complete
     rewriting systems}\label{subsec:rs}


$~$

\vspace{.1in}

In this section we discuss implications of Theorem~\ref{thm:astktf}
for a special class of
\stkbl\ groups, namely
groups that admit a finite complete rewriting system. 
A {\em finite complete rewriting system}
(finite {\em CRS}) for a group $G$ consists of a finite set $A$
and a finite set of rules $R \subseteq A^* \times A^*$
(with each $(u,v) \in R$ written $u \ra v$)
such that 
as a monoid, $G$ is presented by 
$G = Mon\langle A \mid u=v$ whenever $u \ra v \in R \rangle,$
and the rewritings
$xuy \ra xvy$ for all $x,y \in A^*$ and $u \ra v$ in $R$ satisfy:
(1) Each $g \in G$ is 
represented by exactly one {\em irreducible} word $y_g$
(i.e. word that cannot be rewritten)
over $A$, and
(2) the (strict) partial ordering
$x>y$ if $x \rightarrow
x_1 \rightarrow ... \rightarrow x_n \rightarrow y$ is 
well-founded.

Property (1) says that the set $\cc$ of irreducible
words is a set of normal forms for $G$ over $A$.
Given a finite CRS for $G$, there is a {\em minimal}
finite CRS for $G$ with the same set of irreducible
normal forms; that is, for every rule $u \ra v$,
the word $v$ and every proper subword of $u$
are irreducible.  Every finite CRS in this paper
will be assume to be minimal.

Since any prefix of an irreducible word is irreducible,
the set $\cc$ is prefix closed.  Let $\ttt$ be
the set of edges of the Cayley complex $X$
(of the presentation from the CRS) that lie
on paths $\path_X(\ep,y_g)$ for all $y_g \in \cc$.
Prefix closure of $\cc$ implies that $\ttt$ is a 
maximal~tree~in~$X$.

Define the ``rewriting flow function'' 
$\ff:\vec E_X \ra \vec P_X$ 
as follows.  Let  $\ega$ be any edge in $\vece_X$.
If either word $y_ga$ or $y_{ga}a^{-1}$
is irreducible, then let
$\ff(\ega):=\ega$.  On the other hand, if
neither word is irreducible, then there is a unique
decomposition $y_ga=y'ua$ for some $y',u \in A^*$
and $ua \ra v$ in $R$.  In that case, define
$\ff(\ega):=\path_X(g,u^{-1}v)$. 

In~\cite{britherm}, we show that 
$\ff$ is a bounded flow function and the set
$\alg$ is computable, and so the
group $G$ is \astkbl.
Theorem~\ref{thm:astktf} shows that any group
with a finite CRS
admits recursive
intrinsic and extrinsic \tffs.
In Proposition~\ref{prop:rsgrowth}
 we relax the bounds on 
the \tffs\  further, in order to write 
bounds on 
filling functions in terms
of another important function in the study of rewriting systems.

Given any word $w \in A^*$, we write
$w \ras w'$ if there is any sequence of rewritings
$w=w_0 \ra w_1 \ra \cdots \ra w_n=w'$ (including
the possibility that $n=0$ and $w'=w$).
The {\em string growth complexity} function $\gamma:\N \ra \N$
associated to a finite complete rewriting system $(A,R)$
is defined by 
\[ 
\gamma(n) := \max \{l(x) \mid \exists~w \in A^* \text{ with }
l(w) \le n \text { and } w \ras x\}. 
\]
This function $\gamma$ is an upper bound for the
intrinsic (and hence also extrinsic) diameter function
of the group $G$ presented by the rewriting system.  
In the following, we show that 
$G$ admits \tffs\ that are
Lipschitz equivalent to $\gamma$.

\begin{proposition}\label{prop:rsgrowth}
Let $G$ be a group with a finite complete
rewriting system. Let
$\gamma$ be the
string growth complexity function and let $\maxr$ denote the length of 
the longest rewriting rule. Then the
function $n \mapsto \gamma(\lceil n \rceil +\maxr+2)+1$
is both an intrinsic and an extrinsic \tff\ for $G$.
\end{proposition}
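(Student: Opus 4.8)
The plan is to derive Proposition~\ref{prop:rsgrowth} from Theorem~\ref{thm:stkblhastff} by bounding each of the four auxiliary functions $\kti$, $\kte$, $\kxi$, $\kxe$ appearing in Step~2 of that proof in terms of the string growth complexity function $\gamma$, using the specific structure of the rewriting flow function $\ff$ described in Section~\ref{subsec:rs}. Since $\mui$ and $\mue$ are defined as maxima built from these four functions, and since a \tff\ remains a \tff\ after passing to any larger function, it suffices to show $\mui(n), \mue(n) \le \gamma(\lceil n \rceil + \maxr + 2) + 1$ for all $n \in \nn$.

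First I would recall that for a finite complete rewriting system the normal form $y_g$ is the unique irreducible word representing $g$, so if $g \in B(n)$ then $g$ is represented by some word $w$ with $l(w) \le n$, and $w \ras y_g$; hence $l(y_g) \le \gamma(n)$. This gives $\kti(n) \le \gamma(n)$ directly. Since every prefix of $y_g$ has length at most $l(y_g) \le \gamma(n)$, we also get $\kte(n) \le \gamma(n)$. The main work is bounding $\kxi$ and $\kxe$: for a recursive edge $e = \ega$ with $\init(e) = g \in B(n)$, I need to bound the intrinsic and extrinsic diameters of the van Kampen diagram $\dd_e$ built in Step~1. The key observation is property~($\dagger$): every vertex $v$ of $\dd_e$ is joined to the basepoint by an edge path labeled by a normal form $y_{\pi_{\dd_e}(v)} \in \cc$, so $\idi(\dd_e) \le \max\{l(y_h) \mid h = \pi_{\dd_e}(v),\ v \in \dd_e\skz\}$, and likewise $\edi(\dd_e) \le \idi(\dd_e)$. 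So I must bound the word lengths of the normal forms of all group elements appearing as vertex labels of $\dd_e$.

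The heart of the argument is therefore: every vertex label $h$ of $\dd_e$ satisfies $d_X(\ep, h) \le \gamma(n + \maxr + 2)$, or more precisely $l(y_h) \le \gamma(n + \maxr + 2)$, when $d_X(\ep, g) \le n$. Here I would trace through the Step~1 construction for the rewriting flow function. With $h = ga$, we factor $\lbl(\ff(e)) = x_g^{-1}\tc_e x_h$; for the rewriting flow function, $\ff(\ega) = \path_X(g, u^{-1}v)$ where $y_g a = y' u a$ and $ua \ra v$, so $x_g = u$, $x_h = $ the empty word or a short tail, and $\tc_e$ is essentially $v$ (up to the suffix bookkeeping). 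The vertices of $\dd_e$ are the elements along the spine $y_q \tc_e y_r^{-1}$ together with vertices internal to the glued subdiagrams $\dd_{e_i}$; recursively, all of these are elements whose normal forms arise by rewriting subwords of a word obtained from $y_g$ by at most one rewriting of length $\le \maxr$ followed by reading off at most $\maxr$ more letters. Concretely, every such vertex label $h'$ is represented by some word of length at most $l(y_g) + \maxr + $ (a small constant) $\le \gamma(n) + \maxr + 2 \le n + \ldots$; hmm — more carefully, since $g \in B(n)$ gives a word for $g$ of length $\le n$, appending the letter $a$ and then rewriting gives $l(y_h) \le \gamma(n+1)$, and the internal vertices of $\dd_e$ correspond to group elements represented by words obtained by inserting the right-hand side $v$ (length $< \maxr$) in place of a subword, so they have representatives of length $\le n + \maxr + 1$, hence normal forms of length $\le \gamma(n + \maxr + 1)$. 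Thus $\kxi(n), \kxe(n) \le \gamma(n + \maxr + 1)$.

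Assembling: $\kti(\lceil n\rceil + 1) + 1 \le \gamma(\lceil n \rceil + 1) + 1$; $n + 1 \le \gamma(\lceil n \rceil) + 1$ by Remark~\ref{rmk:infinite}-type reasoning (or trivially when $G$ is finite, handled separately, and when $G$ is infinite $\gamma(m) \ge m$); and $\kxi(\lceil n\rceil + \maxr + 1) + 1 \le \gamma(\lceil n\rceil + 2\maxr + 2) + 1$ — I would need to check which combination actually lands inside $\gamma(\lceil n \rceil + \maxr + 2) + 1$ and, if the naive bound overshoots, tighten the diameter estimate by noting that the internal vertices of $\dd_e$ can be reached by words of length $\le l(y_g a) = l(y_g)+1 \le n+1$ rewritten once, i.e.\ their normal forms have length $\le \gamma(n+1)$ not $\gamma(n+\maxr+1)$; this makes $\kxi(\lceil n\rceil+\maxr+1)+1 \le \gamma(\lceil n\rceil + \maxr + 2) + 1$ the dominant term. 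Taking the max of all three and invoking Theorem~\ref{thm:stkblhastff} and Lemma~\ref{lem:edgeimpliesbdry} finishes the proof. \textbf{The main obstacle} I anticipate is pinning down exactly which words represent the vertex labels of $\dd_e$ and getting the constant inside $\gamma$ to come out to $\maxr + 2$ rather than something larger — this requires a careful bookkeeping of the recursive gluing in Step~1 of Theorem~\ref{thm:stkblhastff} together with the precise form of the rewriting flow function, and is where the $+2$ in the statement is genuinely used rather than just a comfortable slack.
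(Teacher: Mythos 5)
Your proposal follows the paper's proof essentially verbatim: both bound $\kti,\kte$ by $\gamma(n)$, sharpen $\kxi,\kxe$ to $\gamma(n+1)$ by observing (via property~($\dagger$) and the iterated-rewriting structure of $\dd_e$) that every vertex of $\dd_e$ is reached from $*$ by a path labeled by a prefix of some word $x$ with $wa \ras x$ for a length-$\le n$ representative $w$ of $g=\init(e)\in B(n)$, and then assemble through $\mui,\mue$ exactly as you do. The one repair needed is in your justification of the tightened bound: do not invoke $l(y_g)\le n$ (irreducible normal forms need not be geodesic) or ``rewritten once'' (the construction iterates the flow function); instead argue, as the paper does, that $w\ras y_w$ implies every word reachable from $y_wa$ is reachable from $wa$, giving the bound $\gamma(l(w)+1)\le\gamma(n+1)$.
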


\begin{proof}
We use the notation and results of the proofs
of Theorems~\ref{thm:stkblhastff} and~\ref{thm:astktf}
throughout this proof.
Let $(A,R)$ be a minimal finite complete rewriting system
for $G$, with associated Cayley complex $X$, and 
let $\ff$ be the rewriting flow function on
the tree from the set $\cc$ of normal forms of this system.
%
%
For any recursive edge $e:=\ega$, the word $y_ga$
is reducible, with rewriting $y_ga=y'ua \ra y'v$ for 
a rule $ua \ra v$ in $R$.
Following the notation of Step 1 of
Theorem~\ref{thm:stkblhastff}, we factor
$\lbl(\ff(e))=u^{-1}v=x_g^{-1} \tc_e x_{ga}$ where
$x_g=u$, $\tc_e=v$, and $x_{ga}=1$, since
$u$ is a suffix of the normal form $y_g$.
Let $\cle=\{\dd_e,\tht_e) \mid e \in E_X\}$
be the associated \rcnf\ from Step 1.

From the proof of Theorem~\ref{thm:astktf}, we have 
$\kte(n) \le \kti(n) = \max\{l(y_g) \mid g \in B(n)\}=
\max\{l(y_w) \mid w \in A^*,~l(w) \le n\}$ for all $n$.
Since each normal form $y_w$ is obtained
from $w$ by rewritings, $\kte(n) \le \kti(n) \le \gamma(n)$.

Also from Theorem~\ref{thm:astktf}, we have $\kxe(n) \le \kxi(n) \le k_r'(n)$
for all $n \in \N$.  
Suppose that $w \in A^*$ is a word of length at most $n$, $a \in A$,
and $e=e_{w,a}$ is the edge in $X$ from $w$ to $wa$
labeled by $a$.
If $e$ is degenerate, then all of the vertices
of $\dd_e$ lie along a path from $*$ labeled $y_w$ or $y_{wa}$ in $\dd_e$.
On the other hand, suppose $e$ is recursive, and 
let $\hat v$ be a vertex on the boundary of the 2-cell $f_e$
of $\bo \dd_e$ from Step 1 of 
Theorem~\ref{thm:stkblhastff}.
Writing (as above) 
$y_wa=y'ua \ra y'v$ where $ua \ra v \in R$, then 
$\hat v$ lies along a path in $\dd_e$
starting at the vertex $\hat g=\term(\path_{\dd_e}(1,y_w))$
and labeled by $\lbl(\ff(e))=u^{-1}v$.
Hence there is a path in $\dd_e$ from
$*$ to $\hat v$ 
that is labeled by a prefix of $y_w$ or $y'v$; 
in either case, the label is a subword of a word $x$ satisfying
$y_wa \ra x$. 
The diagram $\dd_e$ is built 
by iterating the flow function,  successively
applying rewritings to the word $y_w a$
and/or by applying free reductions (which must also result
from rewritings), and so 
we have that for every vertex $\tilde v$ in $\dd_e$,
there is a path from the basepoint $*$ to $\tilde v$ labeled by 
a word $s \in A^*$
such that $y_wa \ras st$ for some $t \in A^*$.
Moreover,
the normal form $y_s$ of $s$
is the element of the set $L_e$ 
(defined in the proof of Theorem~\ref{thm:astktf})
corresponding to the vertex $\tilde v$.
Note that since $s \ras y_s$, then $y_wa \ras y_st$ as well.
That is, $L_e \subseteq L_e'$
where 
\[
L_e':=\{y \in \cc \mid \exists~x \in A^* \text{ such that }
y \text{ is a 
prefix of } x \text{ and } y_w a \ras x\}.
\]

Then the maximum $k(w,a)$ of the lengths of
the elements of $L_e$ is bounded above
by $\max \{l(y) \mid y \in L_e'\}$.
Since the length of a prefix of a word $x$ is at most
$l(x)$, we have
$k(w,a) \le \max \{l(x) \mid y_w a \ras x\}$.
Using the fact that $w \ras y_w$, we also obtain
$k(w,a) \le \max \{l(x) \mid w a \ras x\} \le \gamma(l(w)+1)$.
Plugging this into the formula for $k_r'$, we obtain
$\kxi(n) \le k_r'(n) = \max\{k(w,a) \mid w \in A^*,~a \in A,~l(w) \le n\}
\le \gamma(n+1)$. 

Putting these inequalities together, we obtain $\mue(n) \le \mui(n)$ and

\hspace{.2in} $\mui(n) = \max \{\kti(\lceil n \rceil +  1) + 1, 
         n+1, \kxi(\lceil n \rceil + \maxr+1)\} 
 \le \gamma(\lceil n \rceil +\maxr+2)+1.$
\end{proof}

We remark that every instance of rewritings $\ras$ in the proof
of Proposition~\ref{prop:rsgrowth}
can be replaced by prefix rewritings; that is, 
a sequence of rewritings $w=w_0 \ra \cdots \ra w_n=w'$,
written $w \prs w'$,
such that at each $w_i$,
the shortest possible reducible prefix is rewritten to obtain
$w_{i+1}$.  
Hence $G$ also admits \tffs\ given by the potentially
smaller {\em prefix rewriting string growth complexity}
function $\gamma_p(n):= \max \{l(x) \mid \exists~w \in A^* \text{ with }
l(w) \le n \text { and } w \prs x\} \le \gamma(n)$.

Next we use rewriting systems to discuss
the breadth of the range of \tffs\ for groups.
The iterated
Baumslag-Solitar group
$$G_k=\langle a_0,a_1,...,a_k \mid a_i^{a_{i+1}}=a_i^2; 0 \le i \le k-1\rangle$$
admits a finite complete rewriting
system for each $k \ge 1$ (first described by Gersten;
see \cite{hmeiermeastame} for details).
Hence $G_k$ is also \astkbl, 
with the recursive \tffs\ 
described above.

Gersten~\cite[Section~6]{gerstenexpid} showed that $G_k$
has an isoperimetric function that grows at least as fast as a tower 
of exponentials
$$
E_k(n) := \underbrace{2^{2^{.^{.^{.^{2^n}}}}}}_{\hbox{k times}}~.
$$
It follows from his proof that the (minimal) extrinsic diameter function 
for this group is at least $O(E_{k-1}(n))$.
Hence this is also a lower bound for the (minimal) intrinsic diameter
function for this group.  
Then, by Proposition~\ref{prop:itimpliesid}, 
the function $E_{k-2}$ cannot be an intrinsic or extrinsic \tff\ 
for $G_k$.  In the extrinsic case, this was shown in the
context of tame combings in~\cite{hmeiermeastame}.
Combining this with Proposition~\ref{prop:rsgrowth}
yields the following.

\begin{corollary}
For each $k \ge 2$, the group $G_k$ 
admits recursive intrinsic
and extrinsic \tffs, 
but all \tffs\ for $G$ must grow faster than $E_{k-2}$.
\end{corollary}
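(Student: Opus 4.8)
The plan is to treat the two assertions separately, each as a short consequence of machinery already in place.

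For the claim that $G_k$ has recursive intrinsic and extrinsic \tffs, I would simply recall (as noted just above in the text) that $G_k$ admits a finite complete rewriting system for every $k\ge 1$ (due to Gersten; see~\cite{hmeiermeastame}), so that the rewriting flow function of Section~\ref{subsec:rs} makes $G_k$ \afstkbl; Theorem~\ref{thm:astktf} then produces recursive intrinsic and extrinsic \tffs\ with no further argument. Equivalently one could quote the explicit \tff\ $n\mapsto\gamma(\lceil n\rceil+\maxr+2)+1$ of Proposition~\ref{prop:rsgrowth}, together with the computability of $\gamma$ that follows from solvability of the word problem.

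For the lower bound the key external input is Gersten's estimate. By~\cite[Section~6]{gerstenexpid} the Dehn function of $G_k$ grows at least as fast as the tower $E_k$, and, as remarked in the paragraph preceding this corollary, his argument in fact shows that the minimal \emph{extrinsic} diameter function of $G_k$ is at least $O(E_{k-1})$. Since $d_\dd(*,v)\ge d_X(\ep,\pi_\dd(v))$ for every vertex $v$ of every van Kampen diagram $\dd$, the minimal intrinsic diameter function is therefore also at least $O(E_{k-1})$; and because diameter functions of a finitely presented group are independent of the finite presentation up to Lipschitz equivalence (see~\cite[Chapter~II]{riley}), the same lower bounds hold over every finite presentation of $G_k$.

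To conclude, I would let $f$ be an arbitrary intrinsic or extrinsic \tff\ for $G_k$ with respect to some finite presentation $\pp$. Proposition~\ref{prop:itimpliesid} shows that $n\mapsto\lceil f(n)\rceil$ is an upper bound for the corresponding (intrinsic or extrinsic) diameter function of $(G_k,\pp)$, hence $\lceil f(n)\rceil$ is itself at least $O(E_{k-1}(n))$. Using $k\ge 2$ and reading $E_0$ as the identity, one has $E_{k-1}(n)=2^{E_{k-2}(n)}$, and a tower of height $k-1$ strictly dominates every linear reparametrization of a tower of height $k-2$; consequently $f$ is neither Lipschitz equivalent to $E_{k-2}$ nor bounded above by it, i.e.\ $f$ grows faster than $E_{k-2}$. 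I expect there to be no genuine obstacle here: the only delicate points are expository — fixing the precise meaning of ``grows faster than'' (Lipschitz non-domination), stating the conclusion for \tffs\ over an arbitrary finite presentation, and invoking Gersten's result in its diameter rather than isoperimetric form — while the passage from the extrinsic to the intrinsic lower bound via $d_\dd(*,v)\ge d_X(\ep,\pi_\dd(v))$ and the final tower comparison are both routine.
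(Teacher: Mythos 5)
Your proposal is correct and follows essentially the same route as the paper: the upper bound comes from the finite complete rewriting system for $G_k$ via Theorem~\ref{thm:astktf} (or Proposition~\ref{prop:rsgrowth}), and the lower bound comes from Gersten's $E_{k-1}$ estimate on the extrinsic diameter function, transferred to the intrinsic case via $d_\dd(*,v)\ge d_X(\ep,\pi_\dd(v))$ and then to \tffs\ via Proposition~\ref{prop:itimpliesid}. Your added remarks on presentation-independence and the precise meaning of ``grows faster than'' are sensible expository refinements of what the paper leaves implicit.
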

  




\subsection{Finite groups}\label{subsec:finite}


$~$

\vspace{.1in}

Suppose that $G$ is a finite group with 
finite symmetric presentation $\pp = \langle A \mid R \rangle$.
Let ${\mathcal F}$ be a finite collection 
of van Kampen diagrams over $\pp$, one for each
word over $A$ of length at most $|G|$ that
represents the identity $\ep$ of $G$.

Given any word $u$ over $A$ with $u=_G \ep$,
we will construct a van Kampen diagram for $u$
with intrinsic diameter bounded above by the constant
$|G|+\max\{idiam(\dd) \mid \dd \in {\mathcal F}\}$,
as follows. 
Start with a planar 1-complex that
is a line segment consisting of an edge path
labeled by the word $u$ starting at a basepoint $*$; 
that is, we start with a van Kampen diagram for
the word $uu^{-1}$.
Write $u=u_1'u_1''u_1'''$ where $u_1''=_G \ep$
and no proper prefix of $u_1'u_1''$ contains a subword 
that represents $\ep$.
Note that $l(u_1'u_1'') \le |G|$.
We identify the vertices in the
van Kampen diagram at the start and end of the
boundary path labeled $u_1''$,
and fill in this loop with the van Kampen
diagram from ${\mathcal F}$ for this word.
We now have a van Kampen diagram for the
word $uu_1^{-1}$ where 
$u_1:=u_1'u_1'''$.
We then begin again, and write $u_1=u_2'u_2''u_2'''$
where $u_2''=_G \ep$ and no proper prefix of
$u_2'u_2''$ contains a subword representing the
identity.  Again we identify the vertices at
the start and end of the word $u_2''$ in the
boundary of the diagram, and fill in this
loop with the diagram from ${\mathcal F}$ for
this word, to obtain a van Kampen diagram 
for the word $uu_2^{-1}$ where $u_2:=u_2'u_2'''$.
Repeating this process, since at each step
the length of $u_i$ strictly decreases, we
eventually obtain a word $u_k=u_k''$.  Identifying
the endpoints of this word and filling in
the resulting loop with the van Kampen diagram
in ${\mathcal F}$ yields a van Kampen diagram $\dd_u$
for $u$.  
See Figure~\ref{fig:finitegp} for an illustration of
this procedure.
\begin{figure}
\begin{center}
\includegraphics[width=3.4in,height=0.7in]{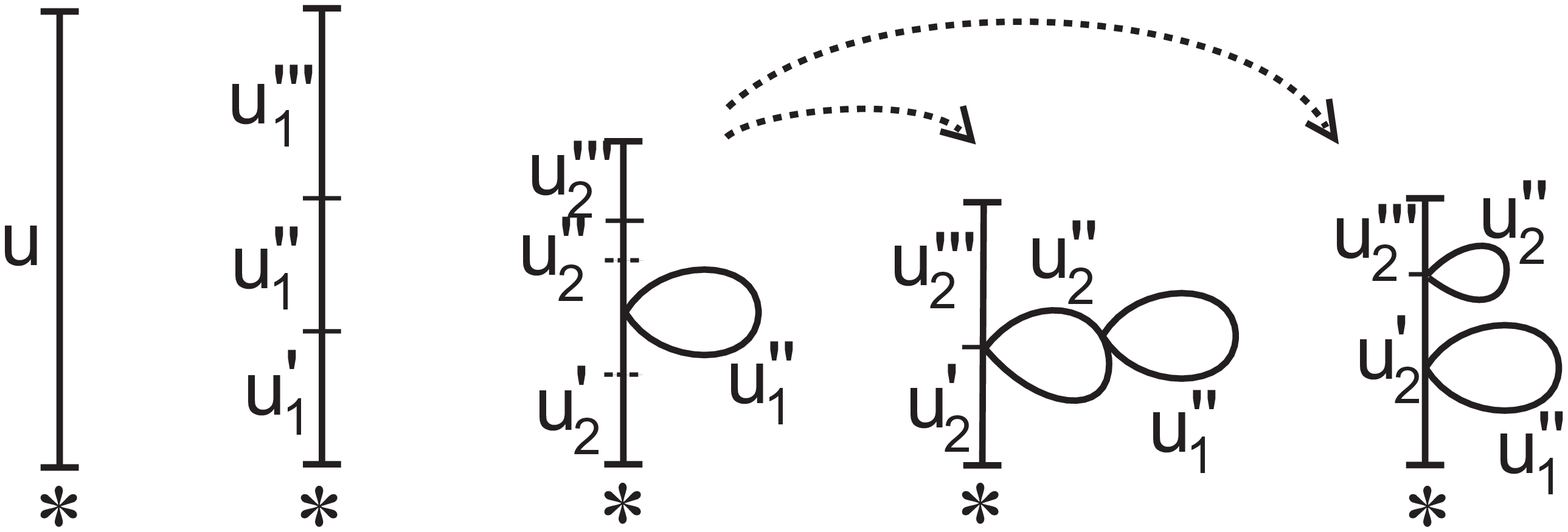}
\caption{Building $\dd_u$ in the finite group case}\label{fig:finitegp}
\end{center}
\end{figure}
At each step, the maximum distance
from the basepoint $*$ to any vertex in
a van Kampen diagram included from ${\mathcal F}$
is at most $|G|+\max\{idiam(\dd) \mid \dd \in {\mathcal F}\}$,
because this subdiagram is attached at 
the endpoint of a path starting at $*$ and
labeled by the word $u_i'$ of length
less than $|G|$.
Hence we obtain the required intrinsic diameter
bound.

Let $\ps:\bo \dd_u \times [0,1] \ra \dd$ 
be any \oc\ of $(\dd_u,\bo \dd_u)$
at $*$.  Then  $\ps$ 
is $f$-tame for the constant function 
$f(n) := |G|+\max\{idiam(\dd) \mid \dd \in {\mathcal F}\}+\frac{1}{2}$,
since this constant is an upper bound for the
coarse distance from the basepoint to every point of $\dd_u$.
%
Similarly, since the extrinsic diameter of
$\dd_u$ (or, 
indeed, of any other van Kampen diagram for $u$) is at
most $|G|$, the function $\pi_\dd \circ \ps$ is
$g$-tame for the constant function
$g(n) := |G|+\frac{1}{2}$.
That is, we have shown the following.

\begin{proposition}\label{prop:finite}
If $G$ is a finite group, then
over any finite presentation $G$
admits constant intrinsic and
extrinsic \tffs.
\end{proposition}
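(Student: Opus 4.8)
The plan is to exploit the fact that the Cayley graph of a finite group has diameter less than $|G|$, so that for every word representing $\ep$ one can build a van Kampen diagram whose intrinsic and extrinsic coarse distances to the basepoint are all bounded by a constant independent of the word; any \oc\ of such a diagram is then automatically tame with respect to a constant function. First I would fix a finite symmetric presentation $\pp=\langle A\mid R\rangle$ for $G$ and choose, once and for all, a finite family $\mathcal{F}$ of van Kampen diagrams over $\pp$ containing one diagram for each of the finitely many words over $A$ of length at most $|G|$ that represent $\ep$. Set $M:=\max\{idiam(\dd)\mid \dd\in\mathcal{F}\}$.

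Next, given an arbitrary $u\in A^*$ with $u=_G\ep$, I would construct a van Kampen diagram $\dd_u$ for $u$ by an iterative peeling procedure. Starting from the trivial diagram for $uu^{-1}$ (a segment of edges labeled $u$ based at $*$), write the current boundary word as $u'vu''$ with $v=_G\ep$ chosen so that no proper prefix of $u'v$ contains a subword representing $\ep$; a pigeonhole count on the elements represented by the prefixes of $u'v$ then forces $l(u'v)\le|G|$, hence $l(v)\le|G|$. Identify the endpoints of the boundary subpath labeled $v$ and fill the resulting loop with the diagram from $\mathcal{F}$ for $v$. Since the boundary word becomes strictly shorter, iterating terminates with a van Kampen diagram $\dd_u$ for $u$. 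At every stage one only ever fills a boundary loop with a disk, so planarity is preserved and no simplicity hypothesis is required.

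Then I would bound the diameters of $\dd_u$. Each subdiagram imported from $\mathcal{F}$ is attached at the terminal vertex of an edge path of length less than $|G|$ issuing from $*$, so every vertex of that subdiagram lies within $|G|+M$ of $*$ in $\dd_u$; hence $idiam(\dd_u)\le|G|+M$. For the extrinsic estimate, every van Kampen diagram for $u$ maps via $\pi_\dd$ into the Cayley complex $X$, whose $1$-skeleton is the Cayley graph of the finite group $G$ and therefore has diameter less than $|G|$, so the extrinsic diameter of $\dd_u$ is at most $|G|$.

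Finally I would take any \oc\ $\ps:\bo\dd_u\times[0,1]\ra\dd_u$ at $*$, which exists for every van Kampen diagram. From the definition of coarse distance, the bound on $idiam(\dd_u)$ gives $\td_{\dd_u}(*,q)\le|G|+M+\frac12$ for every point $q\in\dd_u$, so $\ps$ is $f$-tame for the constant function $f(n):=|G|+M+\frac12$; similarly $\td_X(\ep,\pi_{\dd_u}(q))\le|G|+\frac12$ for all $q$, so $\pi_{\dd_u}\circ\ps$ is $g$-tame for $g(n):=|G|+\frac12$. As these constants do not depend on $u$, they furnish constant intrinsic and extrinsic \tffs\ for $(G,\pp)$. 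The only steps needing care are the termination of the peeling construction and the verification that filling boundary loops keeps the diagram a legitimate planar van Kampen diagram; both are routine, which is why the statement is essentially immediate once bounded-diameter diagrams are available.
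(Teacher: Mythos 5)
Your proposal is correct and follows essentially the same route as the paper: the same finite family $\mathcal{F}$ of diagrams for short null-homotopic words, the same iterative peeling of an innermost subword representing $\ep$ with the pigeonhole bound $l(u'v)\le|G|$, the same intrinsic bound $|G|+M$ and extrinsic bound $|G|$, and the same observation that any \oc\ of a diagram of bounded coarse diameter is tame for a constant function. No substantive differences to report.
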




\subsection{Groups with linear extrinsic \tffs}\label{subsec:linear}


$~$

\vspace{.1in}

For each of the groups considered in this section,
a \dia\ \oc\ $\up:\xx \times [0,1] \ra X$ of the
Cayley complex $X$ for a finite presentation
is constructed, along with
a proof that $\up$ is $f$-tame for a linear function $f$,
elsewhere in the literature.
Consequently, applying Corollary~\ref{cor:etistc}, 
each of these groups admits a linear extrinsic \tff.
Moreover, in each case the group is known to be \stkbl,
and the \rcnf\ constructed in Step 1 of the proof
of Theorem~\ref{thm:stkblhastff} is the same as
the \cnf\ obtained from the \oc\ $\up$ using
the proof of Corollary~\ref{cor:etistc}.  In this
section, we discuss the normal forms from the
bounded flow function in order to find sharper
bounds for their intrinsic \tffs.

\noindent{\bf Thompson's group $F$:}
Thompson's group 
\[F=\langle x_0,x_1 \mid [x_0x_1^{-1},x_0^{-1}x_1x_0],
   [x_0x_1^{-1},x_0^{-2}x_1x_0^2] \rangle\]
is the group of orientation-preserving piecewise linear
homeomorphisms of the unit interval [0,1], satisfying that
each linear piece has a slope of the form $2^i$ for some
$i \in \Z$, and all breakpoints occur in the 2-adics.
In \cite{chst}, Cleary, Hermiller, Stein, and Taback
show that Thompson's group with the generating set
$A=\{x_0^{\pm 1},x_1^{\pm 1}\}$ is (algorithmically)
\fstkbl, 
as a stepping stone to showing
that $F$ 
has a linear \tcf.
We note that although the definition of \dia\ 1-combing
is not included in that paper, and the coarse distance
definition differs slightly, the constructions
of 1-combings in the proofs are \dia\  and
admit Lipschitz equivalent  {\tcf}s.
Hence by Corollary~\ref{cor:etistc}, $F$
has a linear extrinsic \tff.

\begin{corollary}\label{cor:f}
Thompson's group $F$ also has a linear intrinsic \tff.
\end{corollary}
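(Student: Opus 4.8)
The plan is to leverage the linear extrinsic \tff\ that $F$ already possesses, using the recursive combed $\cc$-filling of Theorem~\ref{thm:stkblhastff} built from the \stkbl\ structure of~\cite{chst} together with the good metric behaviour of the normal forms coming from its bounded flow function. Fix the flow function $\ff$ for $F$ over $A=\{x_0^{\pm1},x_1^{\pm1}\}$ from~\cite{chst} and the corresponding tree normal forms $\cc=\{y_g\mid g\in G\}$; let $\cle=\{(\dd_e,\tht_e)\mid e\in E_X\}$ be the \rcnf\ produced by Step 1 of the proof of Theorem~\ref{thm:stkblhastff}, and let $\clf=\{(\dd_w,\ps_w)\mid w=_G\ep\}$ be the \rcf\ it induces via the seashell procedure. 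As observed at the start of Section~\ref{subsec:linear}, this is the same \cnf\ as the one obtained from the linear-tame \dia\ \oc\ of~\cite{chst} through the proof of Corollary~\ref{cor:etistc}; hence each $\pi_{\dd_e}\circ\tht_e$ is $f$-tame for a linear function $f$, and so by Lemma~\ref{lem:edgeimpliesbdry} each $\pi_{\dd_w}\circ\ps_w$ is $f$-tame as well.

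Second, I would record the comparison of intrinsic and extrinsic distances inside these diagrams. Property~($\dagger$) from Step 1 of the proof of Theorem~\ref{thm:stkblhastff} asserts that in each \edg\ $\dd_e$ every vertex $v$ is joined to the basepoint $*$ by an edge path labeled by the normal form $y_{\pi_{\dd_e}(v)}$; since the seashell quotient map embeds each $\dd_e$ into $\dd_w$ and commutes with the cellular maps to $X$, the same statement holds for every $\dd_w\in\clf$. Consequently $d_{\dd_w}(*,v)\le l\big(y_{\pi_{\dd_w}(v)}\big)$ for every vertex $v$ of every diagram in $\clf$. The one nonformal input is that the normal forms $\cc$ determined by the flow function of~\cite{chst} are quasigeodesic in the Cayley graph, i.e.\ there is a constant $C$ with $l(y_g)\le C\,d_X(\ep,g)$ for all $g\in G$; granting this, the nondecreasing function $j:\N\ra\N$ given by $j(n):=Cn$ satisfies $d_{\dd_w}(*,v)\le j\big(d_X(\ep,\pi_{\dd_w}(v))\big)$ for all such $v$.

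The corollary then follows from Lemma~\ref{lem:itversuset}(1): since $\pi_{\dd_w}\circ\ps_w$ is $f$-tame for every $w$, the composite $\tj\circ f$ with $\tj(n)=j(\lceil n\rceil)+1$ is an intrinsic \tff\ for $F$, and it is linear because $\tj$ and $f$ are. One could instead feed the same normal-form estimate directly into Step 2 of the proof of Theorem~\ref{thm:stkblhastff}: the bound $l(y_g)\le C\,d_X(\ep,g)$ makes $\kti$ linear, and combining property~($\dagger$) with the linear extrinsic tameness bound on $\edi(\dd_e)$ makes $\kxi$ linear, so the explicit function $\mui$ of that theorem is already linear.

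The one place where genuine work is needed is the quasigeodesic estimate $l(y_g)\le C\,d_X(\ep,g)$ for the specific normal forms attached to the flow function of~\cite{chst}; everything else is bookkeeping. I would establish it by unwinding the description of the tree $\ttt$ and the flow function in~\cite{chst} and comparing the length of such a normal word with the standard word-length estimates for $F$ with respect to $\{x_0^{\pm1},x_1^{\pm1}\}$ (those of Burillo--Cleary--Stein and of Fordham). In the favourable event that these normal forms are in fact geodesic one has $C=1$ and, moreover, a shortcut: geodesity of $\cc$ forces $\td_{\dd_e}=\td_X\circ\pi_{\dd_e}$ on each $\dd_e$, so that $f$-tameness of each $\pi_{\dd_e}\circ\tht_e$ is equivalent to $f$-tameness of each $\tht_e$, and the geodesic case of Lemma~\ref{lem:edgeimpliesbdry} yields $f$-tameness of each $\ps_w$ directly.
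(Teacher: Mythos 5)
Your proposal follows essentially the same route as the paper: pass from the linear extrinsic tameness of the \rcnf\ from~\cite{chst} through Lemma~\ref{lem:edgeimpliesbdry}, use property~($\dagger$) to bound $d_{\dd_w}(*,v)$ by the length of the normal form of $\pi_{\dd_w}(v)$, and conclude via Lemma~\ref{lem:itversuset}(1). The quasi-geodesic estimate you flag as the one piece of genuine work is exactly \cite[Theorem~3.7]{chst}, which states these normal forms are $(6,0)$-quasi-geodesics, so the paper simply cites it and takes $\tj(n)=6\lceil n\rceil+1$.
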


\begin{proof}
Let $\cle=\{(\dd_e,\tht_e)\}$ 
be the \rcnf\ 
 associated 
to the bounded flow function for $F$ from~\cite{chst},
that in turn induces the $f$-tame \dia\ \oc\ in that paper,
for a linear function $f$.
Then Lemma~\ref{lem:edgeimpliesbdry} shows that the
\cfl\  ${\mathcal D}=\{(\dd_w,\ph_w) \mid w \in A^*, w=_F \ep\}$ 
induced by $\cle$ by the seashell procedure
satisfies the property that each $\pi_{\dd_w} \circ \ph_w$
is $f$-tame for the same function $f$.

The set of normal forms over the generating set 
$A=\{x_0^{\pm 1},x_1^{\pm 1}\}$
associated to the bounded flow function 
for $F$, in~\cite[Observation~3.6(1)]{chst}), is 

\smallskip

$\cc:=\{w \in A^* \mid \forall \eta \in \{\pm 1\}$, the words
$x_0^\eta x_0^{-\eta}$, $ x_1^\eta x_1^{-\eta}$, and $x_0^2x_1^\eta$ 
are  not subwords of $w$, 

\hspace{1in} and $\forall $ prefixes $ w' $ of $ w,
expsum_{x_0}(w') \le 0\}$,

\smallskip

\noindent  where $expsum_{x_0}(w')$ denotes the 
number of occurrences in $w'$ of the letter
$x_0$ minus the number of occurrences in $w'$ 
of the letter $x_0^{-1}$;
that is, the exponent sum for $x_0$.
Moreover, each of the words in $\cc$
labels a (6,0)-quasi-geodesic path in the
Cayley complex $X$~\cite[Theorem~3.7]{chst}.

A consequence of property~($\dagger$) (p.~\pageref{rmk:vkdhasnf}) and the
seashell construction (Lemma~\ref{lem:edgeimpliesbdry})
is that for 
each word $w \in A^*$ with $w=_F \ep$ and for each vertex
$v$ in $\dd_w$, there is a path in $\dd_w$ from the
basepoint $*$ to the vertex $v$ labeled by the (6,0)-quasi-geodesic
normal form in $\cc$ representing $\pi_{\dd_w}(v)$. 
Then we have 
$d_{\dd_w}(*,v)
\le 6d_X(\ep,\pi_{\dd_w}(v))$.
Let $\tj:\nn \ra \nn$ be the (linear) function
defined by $\tj(n)=6 \lceil n \rceil + 1$.
Lemma~\ref{lem:itversuset} then shows that 
the linear function $\tj \circ f$ is a \tff\ for
Thompson's group $F$.
\end{proof}

On the other hand, we note that Cleary and Taback~\cite{clearytaback}
have shown that Thompson's group $F$ is not almost convex
(in fact, Belk and Bux~\cite{belkbux}
have shown that $F$ is
not even minimally almost convex).  Combining this
with Theorem~\ref{thm:ac} below, 
the identity function cannot be an intrinsic or
extrinsic \tff\ for 
Thompson's group $F$.






\medskip

\noindent{\bf Solvable Baumslag-Solitar groups:}
The solvable Baumslag-Solitar groups are presented by  
$G=BS(1,p)=\langle a,t \mid tat^{-1}=a^p \rangle$ with $p \in \Z$.
In~\cite{chst} Cleary, Hermiller, Stein, and Taback 
show that for $p \ge 3$, the groups $BS(1,p)$ admit
a linear \tcf, 
and hence (from Corollary~\ref{cor:etistc})
a linear extrinsic \tff.

\begin{corollary}\label{cor:bs1p} 
The Baumslag-Solitar group
$BS(1,p)$ has an exponential intrinsic \tff.
\end{corollary}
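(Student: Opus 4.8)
The plan is to follow the template of the proof of Corollary~\ref{cor:f} for Thompson's group, the only change being that for $BS(1,p)$ the normal forms produced by the flow function are not quasi-geodesic but merely have length exponential in the distance to the basepoint. Let $\cle=\{(\dd_e,\tht_e)\mid e\in E_X\}$ be the \rcnf\ associated to the bounded flow function for $BS(1,p)=\langle a,t\mid tat^{-1}=a^p\rangle$ from~\cite{chst}; it induces the $f$-tame \dia\ \oc\ of that paper for a linear function $f$. By Lemma~\ref{lem:edgeimpliesbdry}, the \cfl\ $\clf=\{(\dd_w,\ps_w)\mid w=_G\ep\}$ obtained from $\cle$ by the seashell procedure satisfies: each $\pi_{\dd_w}\circ\ps_w$ is $f$-tame, with the same linear $f$. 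As in Corollary~\ref{cor:f}, property~($\dagger$) together with the seashell construction guarantees that for each $w$ with $w=_G\ep$ and each vertex $v$ of $\dd_w$ there is an edge path in $\dd_w$ from $*$ to $v$ labeled by the normal form $y_{\pi_{\dd_w}(v)}$ coming from the flow function; hence $d_{\dd_w}(*,v)\le l(y_{\pi_{\dd_w}(v)})$.

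The main work is then to bound $l(y_g)$ in terms of $d_X(\ep,g)$. I would pass through the identification $BS(1,p)\cong\Z[\tfrac1p]\rtimes\Z$, in which $a=(1,0)$, $t=(0,1)$, and conjugation by $t$ multiplies the first coordinate by $p$. If $d_X(\ep,g)=n$, then expressing $g$ as a product of $n$ letters in $\{a^{\pm1},t^{\pm1}\}$ shows that $g=(m/p^{\ell},k)$ with $|k|\le n$, $0\le\ell\le n$, and $m$ (after clearing to a common denominator) a sum of at most $n$ terms $\pm p^{j}$ with $|j|\le n$; in particular $|m|\le np^{2n}$. The normal form $y_g$ of~\cite{chst} for this element is obtained by first clearing the denominator with at most $\ell$ letters $t^{-1}$, then writing the resulting integer as a power of $a$, then appending $\ell+k$ letters $t^{\pm1}$, so $l(y_g)$ is bounded by a fixed constant times $|m|$ plus a linear term in $n$. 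Consequently $l(y_g)\le j(d_X(\ep,g))$ for the nondecreasing function $j:\N\ra\N$ given by $j(n):=Cp^{Cn}$ with $C$ a suitable constant. Extracting the precise normal form set of~\cite{chst} and verifying this length estimate is the only real obstacle; everything else is bookkeeping.

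Finally, set $\tj(n):=j(\lceil n\rceil)+1$ as in Lemma~\ref{lem:itversuset}. Since every $\pi_{\dd_w}\circ\ps_w$ is $f$-tame and $d_{\dd_w}(*,v)\le j(d_X(\ep,\pi_{\dd_w}(v)))$ for all vertices $v$, part~(1) of Lemma~\ref{lem:itversuset} shows that $\tj\circ f$ is an intrinsic \tff\ for $BS(1,p)$. Because $f$ is linear and $\tj$ is Lipschitz equivalent to $n\mapsto p^n$, the composite $\tj\circ f$ is Lipschitz equivalent to $n\mapsto p^n$, so $BS(1,p)$ admits an exponential intrinsic \tff. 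That this bound is optimal up to Lipschitz equivalence follows from the known exponential lower bound on the intrinsic diameter function of $BS(1,p)$ together with Proposition~\ref{prop:itimpliesid}, which shows every intrinsic \tff\ for $BS(1,p)$ grows at least exponentially; hence the intrinsic \tff\ is Lipschitz equivalent to $n\mapsto p^n$.
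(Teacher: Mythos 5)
Your argument is essentially the paper's own proof: the same \rcnf\ coming from the bounded flow function of~\cite{chst}, the same appeal to Lemma~\ref{lem:edgeimpliesbdry}, property~($\dagger$), and Lemma~\ref{lem:itversuset}(1) with an exponential comparison function $j$. The only real difference is how you bound $l(y_g)$ in terms of $d_X(\ep,g)$: you compute in $\Z[\tfrac{1}{p}]\rtimes\Z$, whereas the paper notes that the normal form $t^{-i}a^mt^k$ is obtained from a geodesic representative of length $n$ by applying the rewriting rules $ta^{\pm1}\ra a^{\pm p}t$, $a^{\pm1}t^{-1}\ra t^{-1}a^{\pm p}$, etc., so that $l(y_g)<p^n$; both estimates are Lipschitz equivalent to $n\mapsto p^n$, so either works. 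One caveat: your closing optimality claim does not follow from what you cite — Proposition~\ref{prop:itimpliesid} would require an exponential lower bound on the \emph{intrinsic diameter function} of $BS(1,p)$, which you have not established and which the paper does not assert; the corollary only claims existence of an exponential intrinsic \tff, so that final sentence should be dropped or justified independently.
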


\begin{proof}
For the generating set $A=\{a,a^{-1},t,t^{-1}\}$,
the infinite complete set of rewriting rules
$ta^\eta \ra a^{\eta p}t$ and
$a^\eta t^{-1} \ra t^{-1}a^{\eta p}$ for $\eta=\pm 1$
together with $t^{-1}a^{pm}t \ra a^m$ for $m \in \Z$
and free reductions yields the set of normal forms
$$
\cc:=\{t^{-i}a^mt^k \mid i,k \in \N \cup \{0\}, m \in \Z,
\text{ and either } p \not| m \text{ or } 0 \in \{i,k\}\}.
$$
The group $BS(1,2)$ has a bounded 
flow function whose tree corresponds to these
normal forms~\cite{britherm},
and the induced \rcnf\ is
exactly the \cnf\ induced by
the 
\dia\ \oc
constructed for $BS(1,p)$ in~\cite{chst},
which is $f$-tame for a linear function $f$.

Proceeding as in the proof of Corollary~\ref{cor:f},
if we let 
${\mathcal D}=\{(\dd_w,\ph_w) \mid w \in A^*, w=_{BS(1,p)} \ep\}$ 
be the 
\cfl\ 
induced by this \rcnf\ using seashells,
then each $\pi_{\dd_w} \circ \ph_w$ is 
$f$-tame (Lemma~\ref{lem:edgeimpliesbdry}), and
property~($\dagger$) says that
for each $v \in \dd_w\skz$ of ${\mathcal D}$,
there is a path in 
$\dd_w$ from $*$ to $v$ labeled by the normal form
of $\pi_{\dd_w}(v) \in BS(1,p)$.
The normal form $y_g$ of any $g \in G$ can be obtained from
a geodesic representative by applying the set of rewriting rules above;
starting from any word of length $n$, these rules yield
a word of length less than $p^n$.
Then $d_{\dd_w}(*,v) \le l(y_{\pi_{\dd_w}(v)})
\le j(d_X(\ep,\pi_{\dd_w}(v))$
for the function $j:\N \ra \N$
given by $j(n)=p^n$.  Lemma~\ref{lem:itversuset} now
applies, to show that the 
exponential function $\tilde j \circ f$
is an intrinsic \tff\ for $BS(1,p)$.
\end{proof}






\medskip

\noindent{\bf Almost convex groups:}
One of the original motivations for the definition
of {\tcf}s in~\cite{hmeiermeastame} 
was to imitate Cannon's~\cite{cannon} notion of
almost convexity in a quasi-isometry invariant
property.  Let $G$ be a group with an
inverse-closed generating set $A$, and
let $d_\ga$ be the path metric on the associated
Cayley graph $\ga$.  For $n \in \N$, 
define the sphere $S(n)$ of radius $n$
to be the set of points in $\ga$ a distance
exactly $n$ from the vertex labeled by the
identity $\ep$.  Recall that the ball $B(n)$ of radius $n$
is the set of points in $\ga$ whose path metric distance
to $\ep$ is less than or equal to $n$.

\begin{definition}\label{def:ac}~\cite{cannon}
A group $G$ is {\em almost convex} with respect
to the finite symmetric
generating set $A$ if there is a constant $k$
such that
for all $n \in \N$ and for all $g,h$ in the
sphere $S(n)$ satisfying
$d_\ga(g,h) \leq 2$
(in the corresponding Cayley
graph), there is a path inside 
the ball $B(n)$ from
$g$ to $h$ of length at most $k$.
\end{definition}

Cannon~\cite{cannon} showed that every group $G$
satisfying an almost convexity condition
over a finite generating set $A$ is also finitely presented by
$\pp_k=\langle A \mid R_k\rangle$, where $R_k$ is the
set of nonempty words $w \in A^*$ satisfying $l(w) \le k+2$ and
$w=_G \ep$.
Thiel~\cite{thiel} showed that almost convexity
is a property that depends upon the finite
generating set used.  

Given an almost convex group $G$, 
let $X$ be the Cayley complex for $\pp_k$, and 
let $\cc=\{y_g \mid g \in G\}$ be the 
set of shortlex  normal forms over $A$ for $G$. 
The ``AC flow function'' 
$\ff:\vece_X \ra \vec P_X$, associated to
the tree of these normal forms, is defined as follows.
As required, $\ff(e):=e$ if $e$ is degenerate.
Suppose that $e=\ega$ is recursive.  If
$g,ga \in S(n)$, then let $\ff(e)$ be any
choice of path $\phi(e)$ of length at most $k$
in $B(n)$ from $g$ to $ga$.  On the other
hand, if $d_X(\ep,g)=n$ and $d_X(\ep,ga)=n+1$,
then write the shortlex normal form
$y_{ga}=y'b$ for some $b \in A$, and
let $\ff(e)$ be a path $\phi(e)$ in $B(n)$ of
length at most $k$ from $g$ to $gab^{-1}$,
followed by the edge $e_{gab^{-1},b}$.
Similarly if $d_X(\ep,g)=n+1$ and $d_X(\ep,ga)=n$,
then write the shortlex normal form
$y_{g}=y'b$ with $b \in A$, and
let $\ff(e)$ be $e_{g,b^{-1}}$ followed by
a path $\phi(e)$ in $B(n)$ of
length at most $k$ from $gb^{-1}$ to $ga$.
Then $\ff$ is a bounded flow function 
for $G$; indeed, each edge of 
$\ff(e)$ is either degenerate or else
has midpoint closer to $\ep$ in the path
metric than the midpoint of $e$.  (See~\cite{britherm}
for more details.)  

In Theorem~\ref{thm:ac} below, 
we show that 
almost convexity
of $(G,A)$ is equivalent to the
existence of a finite set $R$ of
defining relations for $G$ over $A$ such that
the identity function $\iota$
(i.e.~$\iota(n)=n$ for all $n \in \nn$)
is an intrinsic or extrinsic \tff\ for
$G$ over $\langle A \mid R \rangle$.
In the extrinsic case, an $\iota$-tame \dia\ \oc\ of $(X,\xx)$
for an almost convex group is constructed by Hermiller and Meier
in~\cite[Theorem C]{hmeiermeastame}, and this \oc\ 
induces the same \cnf\ as the AC flow function;
we give further details of that case here to include 
a minor correction to the proof in that
earlier paper.

\begin{theorem} \label{thm:ac}
Let $G$ be a group with finite generating set $A$, and
let $\iota: \nn \ra \nn$ denote the identity
function.  The following
are equivalent:
\begin{enumerate}
\item The pair $(G,A)$ is almost convex.
\item $\iota$ is an intrinsic \tff\ for $G$ over 
a finite
presentation $\pp=\langle A \mid R \rangle$.
\item $\iota$ is an extrinsic \tff\ for $G$ over 
a finite
presentation $\pp=\langle A \mid R \rangle$.
\end{enumerate}
\end{theorem}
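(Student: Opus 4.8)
The plan is to prove $(1)\Rightarrow(2)$ and $(1)\Rightarrow(3)$ together, by running the \rcnf\ construction of Step~1 of the proof of Theorem~\ref{thm:stkblhastff} with the AC flow function $\ff$ described above and the set $\cc$ of shortlex normal forms, and then to prove $(2)\Rightarrow(1)$ and $(3)\Rightarrow(1)$ together by a direct van~Kampen diagram argument, building on the characterization of almost convexity in~\cite{hmeiermeastame}.

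For the forward implications, let $X$ be the Cayley complex of $\pp_\ff=\langle A\mid R_\ff\rangle$ and let $\cle=\{(\dd_e,\tht_e)\mid e\in E_X\}$ be the \rcnf\ built from $\ff$ and $\cc$. Since every word in $\cc$ is geodesic, property~($\dagger$) of that construction forces $d_{\dd_e}(*,v)=d_X(\ep,\pi_{\dd_e}(v))$ for every vertex $v$ of every $\dd_e$, so intrinsic and extrinsic coarse distance agree throughout these diagrams; hence by Lemma~\ref{lem:edgeimpliesbdry}, in both its general and its geodesic form, it suffices to prove that each edge $1$-combing $\tht_e$ is $\iota$-tame, i.e.\ that the coarse distance $\td_{\dd_e}(*,\tht_e(p,t))$ is nondecreasing in $t$. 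I would do this by Noetherian induction on $<_\ff$. The degenerate case is immediate, since $\tht_e(p,\cdot)$ then runs along a geodesic normal form followed by a terminal arc of $\he$. For a recursive edge $e=\ega$ with $h:=ga$ put $n:=\min\{d_X(\ep,g),d_X(\ep,h)\}$. The defining feature of the AC flow function --- every edge of $\ff(e)$ is either degenerate or has midpoint strictly nearer $\ep$ than the midpoint of $e$, and the one edge of $\ff(e)$ (if any) that reaches the sphere of radius $n+1$ is a tree edge absorbed into a normal-form suffix $x_g$ or $x_h$ --- yields: (i) every vertex of the subpath $\tc_e$ of $\ff(e)$ lies in $B(n)$, and every edge-midpoint of $\tc_e$ has coarse distance $\le n-\tfrac12$; and (ii) the interior of the $2$-cell $\hc_e$, whose boundary word $x_g^{-1}\tc_e x_h a^{-1}$ has the side $\he$ at the maximal midpoint coarse distance $n+\tfrac12$, has coarse distance exactly $n+\tfrac14$. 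Now $\tht_e(p,\cdot)$ first runs the inductively radially-monotone combing $\tht_e'$ of the subdiagram $\dd_e'$ (whose vertex-distances are unchanged in $\dd_e'$, and hence in $\dd_e$, since they equal the $X$-distances of their images) out to a point of $\tc_e$ of coarse distance $\le n$, and then either crosses $\hc_e$, with coarse distance rising monotonically through $n+\tfrac14$ to the value $n+\tfrac12$ at an interior point of $\he$, or climbs a geodesic suffix $x_g$ or $x_h$ to an endpoint of $\he$; in every case $\tht_e(p,\cdot)$ is radially monotone. Thus each $\tht_e$ is $\iota$-tame, and Lemma~\ref{lem:edgeimpliesbdry} gives that the induced $1$-combings $\ps_w$ and the compositions $\pi_{\dd_w}\circ\ps_w$ are $\iota$-tame, proving (2) and (3) with $\pp=\pp_\ff$.

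For the converse, suppose $\iota$ is an intrinsic or an extrinsic \tff\ over some $\pp=\langle A\mid R\rangle$ and set $k:=\max\{l(r)\mid r\in R\}$. Fix $g,h\in S(n)$ with $d_\ga(g,h)\le2$; if the midpoint vertex in the $d_\ga(g,h)=2$ case lies in $B(n)$ the edges $g$--midpoint--$h$ already give a length-$2$ path in $B(n)$, so we may assume all of $g,h$, and the midpoint $v$ if present, lie on spheres of radius $n$ or $n+1$. Choose geodesic words of length $n$ for $g$ and $h$ and a word $s$ of length $\le2$ with $\rep(s)=g^{-1}h$, and let $\dd$ be a van~Kampen diagram for their concatenation $w$ carrying a $1$-combing $\ps$ with $\ps$ (respectively $\pi_\dd\circ\ps$) $\iota$-tame. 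Let $\alpha$ be the sub-arc of $\bo\dd$ labelled $s$. Applying $\iota$-tameness at time $1$, the interior points of each edge of $\alpha$ have coarse distance $\le n+\tfrac12$ from $*$, so the combing restricted to those interiors together with the combing paths to the endpoints of $\alpha$ meets only vertices of $\dd$ at distance $\le n$ from $*$, hence --- applying $\pi_\dd$, which cannot increase distance --- only vertices of $B(n)$. A short geodesicity argument shows each edge of $\alpha$ lies on a $2$-cell of $\dd$, and the combing of that edge's interior sweeps into that $2$-cell, so its boundary loop (a lift of a relator of length $\le k$) has coarse distance $\le n+\tfrac12$ to $*$. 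Reading around these one or two relator loops produces a path from $g$ to $h$ of length $O(k)$ whose only vertices possibly outside $B(n)$ are isolated ones on $S(n+1)$; rerouting each of these finitely many vertices back inside $B(n)$ --- the step where an induction, on $n$ or on $\mathrm{Area}(\dd)$, is needed, exactly as in~\cite[Theorem~C]{hmeiermeastame} --- gives a path from $g$ to $h$ in $B(n)$ of length bounded by a constant depending only on $k$, which is then an almost convexity constant for $(G,A)$.

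The main obstacle is this last step: the combing confines all the relevant diagram data to within coarse distance $n+\tfrac12$, but the path one extracts from the relator loops can still touch isolated vertices of $S(n+1)$, and it is precisely the uniform removal of these that requires the correction to~\cite[Theorem~C]{hmeiermeastame} mentioned above --- and there the $-\tfrac14$ convention for coarse distance on $2$-cells is what makes the bookkeeping close. In the forward direction the only subtle points are similarly bookkeeping: verifying that the suffixes $x_g,x_h$ are nonempty whenever $d_X(\ep,g)\ne d_X(\ep,h)$, so that the ends of $\tc_e$ do not poke past radius $n$, and that the piecewise formula for $\tht_e$ is well behaved at the boundary values of $a_p$; neither affects radial monotonicity.
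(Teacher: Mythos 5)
Your proposal follows essentially the same route as the paper: the forward implications are obtained by running the \rstkg\ construction with the AC flow function and shortlex normal forms, proving $\iota$-tameness of each \ehy\ by Noetherian induction (with the same coarse-distance bookkeeping $n$, $n+\tfrac14$, $n+\tfrac12$ across the cell $\hc_e$), and using property~($\dagger$) plus geodesicity of the normal forms to transfer between intrinsic and extrinsic distance via Lemma~\ref{lem:edgeimpliesbdry}; the converse is deferred, as in the paper, to the argument of \cite[Theorem~C]{hmeiermeastame}. The only difference is organizational --- you establish the intrinsic/extrinsic distance agreement up front and handle (2) and (3) simultaneously, whereas the paper proves (1)$\Rightarrow$(3) first and then deduces (1)$\Rightarrow$(2) --- which does not change the substance of the argument.
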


\begin{proof}
{\em (1) implies (3):}
Let $X$ be the Cayley complex of $\pp_k$ and let
$\ff$ be the AC flow function for $G$.
Following the notation of Step 1 of the proof of
Theorem~\ref{thm:stkblhastff}, 
for each recursive edge $e=\ega$ write
$\lbl(\ff(e))=x_g^{-1}z_ex_{ga}$ where
the subword $z_e$ is the label of the subpath
$\phi(e)$ in the description of $\ff$ above.
Let $\cle=\{(\dd_e,\tht_e) \mid e \in E_X \}$
be the associated \rcnf.

Theorem~\ref{thm:stkblhastff} can now be applied, but
unfortunately this result is insufficient. Although
the fact that all of the normal forms in $\cc$
are geodesic implies that the functions $\kti$ and $\kxi$
are the identity, the \tff\ bounds
$\mui$ and $\mue$ are not.
Instead, we follow the 
steps of the algorithm that built the \rcnf\ more carefully.

Let $e \in E_X$ 
with endpoints $g$ and $h$,
and let $n:=\min\{d_X(\ep,g),d_X(\ep,h)\}$.
Let $\he$, 
with endpoints $\hat g$ and $\hat h$,
be the edge corresponding to $e$ in the boundary of the
van Kampen diagram $\dd_e$.
Now the paths $\pi_{\dd_e} \circ \tht_e(\hat g, \cdot)$
and $\pi_{\dd_e} \circ \tht_e(\hat h,\cdot)$ follow the geodesic
paths in $X$ that start from $\ep$ and
that are labeled by the words $y_g$ and $y_h$ 
at constant speed, and so are $\iota-tame$.
Let $p$ be any
point in the interior of $\he$.

{\em Case I.  $e$ is degenerate.}
Then $\dd_e$ is a line segment with no 2-cells,
and the path $\pi_{\dd_e} \circ \tht_e(p,\cdot)$ follows a
geodesic in $\xx$.  
Hence this path is $\iota$-tame.

{\em Case II.  $e$ is recursive.}
We proceed by Noetherian induction, using 
the notation in Section~\ref{subsec:stkbl}.  
By slight abuse of notation,
let $e$ also denote the directed edge from $g$ to $h$
that yields the element $(\dd_e,\tht_e)$ of $\cle$.
On the interval $[0,a_p]$,
the path $\tht_e(p,\cdot)$ follows a
path $\tht_e'(\Xi_e(p,0),\cdot)$ in a subdiagram of
$\dd_e$ that is either a path from an \ehy\  for an edge $e_i$
of $X$ satisfying $e_i <_\ff e$, or
a line segment labeled by a shortlex normal form.  
Hence either by induction or case I,
the path $\pi_{\dd_e} \circ \tht_e(p,\cdot)$ is $\iota$-tame
on $[0,a_p]$.

On the interval $[a_p,1]$, the path 
$\tht_e(p,\cdot)$ follows the path $\Xi_e(p,\cdot)$
from the point $\Xi_e(p,0)$ (in the subpath of $\bo \hc_e$
labeled $\tc_e$, whose image in $X$
is contained in $B(n)$) through the interior of
the 2-cell $\hc_e$ of $\dd_e$ to the point $p$.
We have
$\td_X(\ep,\pi_{\dd_e}(\Xi_e(p,0))) \le n$, 
$\td_X(\ep,\pi_{\dd_e}(\Xi_e(p,t))) = n+\frac{1}{4}$
for all $t \in (0,1)$, 
and $\td_X(\ep,\pi_{\dd_e}(\Xi_e(p,1))) = 
\td_X(\ep,p)=n+\frac{1}{2}$.
Hence the path $\pi_{\dd_e} \circ \Xi_e(p,\cdot)$ is $\iota$-tame.
Combining these, we have that $\pi_{\dd_e} \circ \tht_e$
is also $\iota$-tame in Case II. 

Since in the \cnf\  
$\cle$,
each map $\pi_{\dd_e} \circ \tht_e$  is $\iota$-tame,
Lemma~\ref{lem:edgeimpliesbdry} then shows that
$\iota$ is a \tff\ for $G$ over $\pp_k$.

{\em (1) implies (2)}: 
As noted in property~($\dagger$), the \rcnf\ 
$\cle$ for the AC flow function above satisfies the 
property that 
for every vertex $v$ in a van Kampen diagram $\dd$ of $\cle$, there is
a path in $\dd$ from $*$ to $v$ labeled by the
shortlex normal form for the element $\pi_{\dd}(v)$
of $G$.  Since these normal forms label geodesics in $X$,
it follows that intrinsic and extrinsic distances (to the basepoints)
in the diagrams $\dd$ of $\cle$ are the same.
Lemma~\ref{lem:edgeimpliesbdry} again
applies to show $\iota$ is also an intrinsic \tff\ for $G$ over $\pp_k$.

{\em (2) or (3) implies (1)}: 
The proof of this direction in the extrinsic case
closely follows
the proof of~\cite[Theorem~C]{hmeiermeastame}, and
the proof in the intrinsic case is quite similar.
\end{proof}

\subsection{Combable groups}\label{subsec:combable}


$~$

\vspace{.1in}

In this section we consider a class of finitely presented groups which
admit a rather different procedure for constructing van Kampen diagrams,
namely combable groups.  
Let $\cc=\{y_g \mid g \in G\}$ be a set of
of normal forms over a finite inverse-closed
generating set $A$ for the group $G$, such that
each normal form $y_g$ labels a simple path in
the Cayley graph $\ga$.
The set $\cc$
satisfies a (synchronous)
{\em $K$-fellow traveler property} for a constant
$K \ge 1$ if whenever $g,h \in G$ and $a \in A$ with $ga=_G h$, and
we write $y_g=a_1 \cdots a_m$ and $y_h=b_1 \cdots b_n$ with
each $a_i,b_i \in A$
(where, without loss of generality, we assume $m \le n$),
then for all $1 \le i \le m$ we have
$d_\ga(a_1 \cdots a_i,b_1 \cdots b_i) \le K$, and
for all $m< i \le n$ we have $d_\ga(g,b_1 \cdots b_i) \le K$.
The group $G$ is {\em combable} if $G$ admits a language
of normal forms satisfying a $K$-fellow traveler property.
(Note that this notion of combable is not the same
as the tame combability discussed earlier in this
paper.)

Before imposing a geometric restriction on the
normal forms, we first consider the more general
case of combable groups with respect to
simple word normal forms.

\begin{proposition}\label{prop:combing}
Let $G$ be a group with a finite generating set $A$ and 
Cayley graph $\ga$.  If $G$ 
has a
set $\cc$ of normal forms that label
simple paths in $\ga$ and satisfy a $K$-fellow traveler
property such that for all $n \in \N$ the set
$$T_n:=\{w \in A^* \mid d_\ga(\ep,w) \le n \text{ and } 
    w \text{ is a prefix of a word in }\cc\}$$
is finite, then
$G$ admits well-defined intrinsic and extrinsic \tffs.
\end{proposition}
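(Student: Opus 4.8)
The plan is to realise $G$ over a convenient finite presentation, build a combed $\cc$-filling out of ``ladder'' diagrams supplied by the $K$-fellow-traveler property, bound its tameness using the finiteness of the sets $T_n$, and then pass to intrinsic and extrinsic \tffs\ by invoking Lemmas~\ref{lem:edgeimpliesbdry} and~\ref{lem:itversuset}.

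\emph{Setup.} I would take $\pp=\langle A\mid R\rangle$, with $R$ the (finite, automatically symmetric) set of nonempty words over $A$ of length at most $2K+2$ that represent $\ep$, and let $X$ be the corresponding Cayley complex. Define $\kappa\colon\N\to\N$ by $\kappa(n):=\max\bigl(n,\ \max\{\,l(w)\mid w\in T_n\}\bigr)$; this is well-defined and nondecreasing by hypothesis, and the only use made of the $T_n$-condition will be the implication $(\ast)$: \emph{if $w'$ is a prefix of some $y_g\in\cc$ with $d_\ga(\ep,w')\le n$, then $l(w')\le\kappa(n)$}. Since every word of $\cc$ labels a simple path in $X$, $\cc$ is eligible to be the normal-form set of a \cnf.

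\emph{The ladder \cnf.} For each edge $e=e_{g,a}$ of $X$ (one orientation per undirected edge) I would build an \edg\ $\dd_e$ for $y_gay_{ga}^{-1}$ as follows. Applying the synchronous $K$-fellow-traveler property to the pair $g,ga$, join the $i$-th vertex of $y_g$ to the $i$-th vertex of $y_{ga}$ (and, beyond the length of the shorter normal form, the endpoint of the shorter one to the $i$-th vertex of the longer) by a geodesic ``rung'' $p_i$ of $\ga$ of length at most $K$, taking the top rung $p_N$ to be the edge $e$ itself. Each region between consecutive rungs is bounded by a loop of length at most $2K+2$, hence is filled by a van Kampen diagram over $\pp$; since only finitely many such loops arise, fix one filling diagram for each, so they have uniformly bounded diameter $D$. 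Gluing these along the rungs (which, being geodesics, are simple paths) produces the planar \edg\ $\dd_e$ with boundary $y_gay_{ga}^{-1}$; let $\he$ be the edge of $\bo\dd_e$ labelled $a$. For the \ehy\ $\tht_e\colon\he\times[0,1]\to\dd_e$ I would use the natural ladder combing: the combing path arriving at the point a fraction $\lambda$ along $\he$ runs from $*$ up through the fraction-$\lambda$ point of each successive rung $p_0,p_1,\dots,p_N$, crossing each of the finitely many region diagrams by a fixed sweep; and, exactly as in the ``$a_p$'' bookkeeping in Step~1 of the proof of Theorem~\ref{thm:stkblhastff}, I would normalise the parametrisations so that for \emph{every} edge the combing path arriving at a vertex $g$ is the constant-speed traversal of $y_g$. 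With that normalisation the gluing condition of a \cnf\ is satisfied, so $\cle=\{(\dd_e,\tht_e)\}$ is a \cnf\ for $(G,\pp)$.

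\emph{Tameness.} I claim a single nondecreasing, finite-valued $f$ makes every $\pi_{\dd_e}\circ\tht_e$ $f$-tame. Fix $e$, a point $p\in\he$, and $s<t$, and put $r:=\td_X\bigl(\ep,\pi_{\dd_e}(\tht_e(p,t))\bigr)$. Every point of $\dd_e$ lies within $K+D$ (in $\dd_e$, hence its image under $\pi_{\dd_e}$ lies within $K+D$ in $X$) of a vertex whose $\pi_{\dd_e}$-image is a prefix of $y_g$ or of $y_{ga}$; since the sweep crosses the rungs in order, one can track $\tht_e(p,\cdot)$ by such prefixes $v_u$ with $\pi_{\dd_e}(\tht_e(p,u))$ within $K+D$ of $v_u$ in $X$ and with $l(v_u)$ nondecreasing in $u$. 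Then $d_\ga(\ep,v_t)\le r+K+D+2$, so $(\ast)$ gives $l(v_t)\le\kappa(\lceil r\rceil+K+D+2)$, whence $l(v_s)\le l(v_t)\le\kappa(\lceil r\rceil+K+D+2)$, and so $d_\ga(\ep,v_s)\le l(v_s)\le\kappa(\lceil r\rceil+K+D+2)$, giving
\[
\td_X\bigl(\ep,\pi_{\dd_e}(\tht_e(p,s))\bigr)\ \le\ \kappa(\lceil r\rceil+K+D+2)+K+D+2\ =:\ f(r).
\]
Hence each $\pi_{\dd_e}\circ\tht_e$ is $f$-tame, and by Lemma~\ref{lem:edgeimpliesbdry} the induced \cfl\ $\{(\dd_w,\ps_w)\}$ has every $\pi_{\dd_w}\circ\ps_w$ $f$-tame; so $f$ is an extrinsic \tff\ for $G$.

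\emph{Intrinsic \tff, and the obstacle.} Every vertex $v$ of a ladder diagram $\dd_e$ can be joined to $*$ in $\dd_e$ by a path of length at most $j\bigl(d_X(\ep,\pi_{\dd_e}(v))\bigr)$, where $j(n):=\kappa(n+K+D)+K+D$: a rail vertex is a prefix of $y_g$ or $y_{ga}$ of length at most $\kappa(d_X(\ep,\pi_{\dd_e}(v)))$ by $(\ast)$, while a rung or region vertex lies within $K+D$ of such a vertex. Since the seashell quotient map preserves extrinsic distances and does not increase intrinsic distances, the inequality $d_{\dd_w}(*,v)\le j(d_X(\ep,\pi_{\dd_w}(v)))$ then holds for every vertex $v$ of each $\dd_w$, so Lemma~\ref{lem:itversuset}(1) shows $\tj\circ f$ is an intrinsic \tff\ for $G$. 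Hence $G$ admits well-defined intrinsic and extrinsic \tffs. I expect the real work to lie in constructing the ladder combing $\tht_e$ so that it is at once continuous, compatible across edges in the strong sense demanded by the \cnf\ gluing condition, and structured enough (the sweep respecting the rung order) that the tameness bound is \emph{uniform} in $e$ --- which is precisely where both the synchronous fellow-traveler property and the finiteness of the sets $T_n$ get used.
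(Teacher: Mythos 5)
Your construction is the paper's construction: the presentation with all relators of length at most $2K+2$, the ``thin'' ladder {\edg}s built from the fellow-traveler rungs, the \ehy\ sweeping through the cells in rung order, and the extrinsic tameness bound obtained by tracking the combing path by prefixes of $y_g$ (or $y_{ga}$) lying in $T_n$ and using finiteness of $T_n$ exactly as you do (the paper's functions $t^i,t^e$ play the role of your $\kappa$, with $t^e$ giving a slightly sharper extrinsic bound than your $l(v_s)\ge d_\ga(\ep,v_s)$ estimate, which is immaterial for mere well-definedness). Where you genuinely diverge is the intrinsic case: the paper runs the same thin-diagram argument a second time to bound $\td_{\dd_e}(*,\tht_e(p,s))$ intrinsically and concludes directly, whereas you deduce the intrinsic \tff\ from the extrinsic one via the vertex inequality $d_{\dd_w}(*,v)\le j(d_X(\ep,\pi_{\dd_w}(v)))$ and Lemma~\ref{lem:itversuset}(1). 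Your route costs a composition (roughly $\kappa\circ\kappa$ instead of a single application of $t^i$), so the paper's bound is sharper and feeds more cleanly into the two corollaries that follow; on the other hand, your route is more robust on one delicate point: since the normal forms here need not be geodesic, the intrinsic clause of Lemma~\ref{lem:edgeimpliesbdry} (which is stated only for geodesic {\cnf}s, because the seashell quotient can shrink intrinsic distances) does not apply verbatim, and passing through Lemma~\ref{lem:itversuset} --- whose hypotheses are phrased on the diagrams of the \cfl\ itself --- sidesteps that transfer problem entirely. Both arguments are correct for the statement as claimed.
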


\begin{proof}
The $K$-fellow traveler property implies that the presentation 
$\pp_K:=\langle A \mid R_K \rangle$, where
$R_K=\{w \in A^* \setminus \{1\} \mid l(w) \le 2K+2 \text{ and }
w=_G \ep\}$, is a finite (symmetric) presentation~for~$G$.  

We build a \cnf\ for $G$ over $\pp_K$ as follows.
Let $X$ be the Cayley complex, and let $\ega \in \vece_X$.
As above, write the normal forms $y_g,y_{ga} \in \cc$ as
$y_g=a_1 \cdots a_m$ and $y_{ga}=b_1 \cdots b_n$
with each $a_i,b_i \in A$.  For each $m<i \le n$, 
let $a_i$ denote the empty word, and conversely if
$n<i \le m$ let $b_i:=1$.
Define the words $c_0 := 1$, $c_n:=a$, and  
for each $1 \le i \le n-1$, let $c_i$ be a word in $A^*$
labeling a geodesic path in $X$ from $a_1 \cdots a_i$ to
$b_1 \cdots b_i$;
thus $l(c_i) \le K$ for all $i$.
The diagram $\dd_e$ is built by successively gluing 2-cells 
labeled $a_ic_ib_{i}^{-1}c_{i-1}^{-1}$, for $1 \le i \le n$,
along their common $c_i$ boundaries.  (When $c_{i-1}=c_i=1$
and $a_i=b_i$, an edge is glued rather than a 2-cell.)
The
diagram $\dd_e$ is ``thin'', in that it has only the
width of (at most) one 2-cell.
An \ehy\  $\tht_e$ for this diagram can be constructed to go
successively through each 2-cell in turn
from the basepoint $*$ to the edge $\he$ corresponding to $e$; 
see Figure~\ref{fig:ladder} for an illustration.
\begin{figure}
\begin{center}
\includegraphics[width=3.7in,height=0.8in]{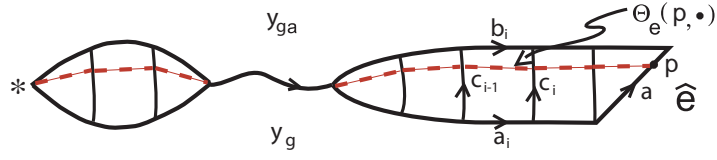}
\caption{``Thin'' van Kampen diagram $\dd_e$}\label{fig:ladder}
\end{center}
\end{figure}
Choosing one direction for each undirected edge of $X$, 
let $\cle=\{(\dd_e,\tht_e) \mid e \in E_X\}$ be the corresponding \cnf.

Let $\he=\path_{\dd_e}(y_g,y_{ga})$ be the edge of $\bo\dd_e$
corresponding to
the edge $e$ of $X$.  Also let $p$ be any
point in $\he$, and
suppose that $0 \le s < t \le 1$.
Applying the ``thinness'' of $\dd_e$,
every point of $\dd_e$ lies in some closed cell
of $\dd_e$ that also contains a vertex in the boundary
path $q:=\path_{\dd_e}(1,y_g)$.
In particular, there are vertices $v_s$ and $v_t$ on $q$ 
such that the point $\tht_e(p,s)$ and 
the point $v_s$ occupy
the same closed 0, 1, or 2-cell in $\dd_e$,
$\tht_e(p,t)$ and $v_t$ occupy
a common closed cell, and $v_s$ is reached before or at $v_t$ 
when traversing the directed path $q$.  
Let $\maxr \le 2K+2$ denote the length
of the longest relator in the presentation $\pp_K$. 
Then we have
$|\td_{\dd_e}(*,\tht_e(p,s))-\td_{\dd_e}(*,v_s)| \le \maxr+1$ and
$|\td_{X}(\ep,\pi_{\dd_e}(\tht_e(p,s)))-\td_{X}(\ep,\pi_{\dd_e}(v_s))| 
\le \maxr+1$, and similarly for the pair $\tht_e(p,t)$ and $v_t$.
Write the word $y_g=y_1y_2y_3$ where the vertex $v_s$
is the end of the subpath $\path_{\dd_e}(1,y_1)$ of $q$, and
and similarly the vertex $v_t$ occurs between the $y_2$ and $y_3$
subwords.  Note that 
$y_1y_2$ is a
prefix of a normal form word in $\cc$,
and so satisfies 
$y_1y_2 \in T_{d_X(\ep,\pi_{\dd_e}(v_t))}$.

Define the function $t^i:\N \ra \N$ by
$t^i(n):=\max\{l(w) \mid w \in T_n\}$.  Since each 
set $T_n$ is 
finite, $t^i$ is well-defined.
Using the fact that $t^i$ is nondecreasing, we have
\begin{eqnarray*}
\td_{\dd_e}(*,\tht_e(p,s))
&\le&   \td_{\dd_e}(*,v_s)+ \maxr+1 
\text{\ \ \ \ }\le \text{\ \ \ \ }    l(y_1)+\maxr+1 \\
&\le&    l(y_1y_2)+\maxr+1 
\text{\ \ \ \ } \le \text{\ \ \ \ } t^i(d_X(\ep,\pi_{\dd_e}(v_t)))+\maxr+1 \\
&\le&    t^i(\td_{\dd_e}(*,v_t))+\maxr+1 
 \le  t^i(\lceil \td_{\dd_e}(*,\tht_e(p,t))\rceil+\maxr+1)+\maxr+1
\end{eqnarray*}
Then the
function $n \mapsto t^i(\lceil n \rceil+2K+3)+2K+3$
is an intrinsic \tff\ for $G$.

Next define $t^e:\N \ra \N$ by
$t^e(n):=\max\{d_X(\ep,v) \mid v \text{ is a 
prefix of a word in } T_n\}$.
Again, this is a well-defined nondecreasing function.
In this case, we note that since $y_1$ is a prefix of $y_1y_2$,
then $y_1$ is a prefix of a word in $T_{d_X(\ep,\pi_{\dd_e}(v_t))}$.
Then
\begin{eqnarray*}
\td_{X}(\ep,\pi_{\dd_e}(\tht_e(p,s)))
&\le&   \td_{X}(\ep,\pi_{\dd_e}(v_s))+\maxr+1 
\text{\ \ } = \text{\ \ } d_X(\ep,y_1)+\maxr+1\\
& \le & t^e(d_X(\ep,\pi_{\dd_e}(v_t)))+\maxr+1\\ 
&\le & t^e(\lceil \td_X(\ep,\pi_{\dd_e}(\tht_e(p,t)))\rceil+\maxr+1)+\maxr+1.
\end{eqnarray*}
Hence $n \mapsto t^e(\lceil n \rceil+2K+3)+2K+3$
is an extrinsic \tff\ for $G$.
\end{proof}

We highlight two special cases in which the hypothesis
of Proposition~\ref{prop:combing}, that each set
$T_n$ is finite, is satisfied.
The first is the case in which the set of normal
forms is prefix-closed.  For this case, the functions
$t^i=\kti$ and $t^e=\kte$ are the functions defined
in Section~\ref{subsec:stkbl}, and so we have the following.

\begin{corollary}
If $G$ has a prefix-closed set of normal forms that satisfies
a $K$-fellow traveler property, then 
$ n \mapsto \kti(\lceil n \rceil+2K+3)+2K+3$
is an intrinsic \tff, and
$n \mapsto \kte(\lceil n \rceil+2K+3)+2K+3$
is an extrinsic \tff, for $G$.
\end{corollary}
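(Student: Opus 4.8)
The plan is to obtain this statement as an immediate specialization of Proposition~\ref{prop:combing} and its proof, so the only substantive work is to verify that a prefix-closed normal form set satisfying the $K$-fellow traveler property falls under that proposition, and that the auxiliary functions $t^i$ and $t^e$ appearing in its proof coincide with $\kti$ and $\kte$.

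First I would check the finiteness hypothesis on the sets $T_n$. Since $\cc$ is prefix-closed, every prefix of a word $y_g \in \cc$ is itself a word in $\cc$, hence is the normal form of the unique element of $G$ that it represents; consequently
\[
T_n=\{w\in A^*\mid d_\ga(\ep,w)\le n,\ w\text{ a prefix of a word in }\cc\}=\{y_g\mid g\in B(n)\}.
\]
As the restriction of $\rep$ to $\cc$ is a bijection onto $G$ and each ball $B(n)$ in the Cayley graph is finite, every $T_n$ is finite. The standing assumption of Section~\ref{subsec:combable}, that the normal forms label simple paths in $\ga$, supplies the remaining hypothesis of Proposition~\ref{prop:combing}, so that proposition --- together with the explicit bounds produced in its proof --- applies to $(G,\pp_K)$.

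It remains to identify the functions. From the equality $T_n=\{y_g\mid g\in B(n)\}$ we get $t^i(n)=\max\{l(w)\mid w\in T_n\}=\max\{l(y_g)\mid g\in B(n)\}=\kti(n)$, and since a word $v$ is a prefix of some element of $T_n$ precisely when it is a prefix of $y_g$ for some $g\in B(n)$, likewise $t^e(n)=\kte(n)$. Substituting these identities into the bounds established in the proof of Proposition~\ref{prop:combing} --- namely that $n\mapsto t^i(\lceil n\rceil+2K+3)+2K+3$ is an intrinsic \tff\ and $n\mapsto t^e(\lceil n\rceil+2K+3)+2K+3$ is an extrinsic \tff\ for $G$ --- yields exactly the two functions in the statement. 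I do not expect a real obstacle: all of the geometry is already done in Proposition~\ref{prop:combing}, and the corollary is the bookkeeping observation that prefix closure collapses ``prefixes of normal forms'' to ``normal forms,'' so the counting functions of that proof literally are $\kti$ and $\kte$. The only point requiring a word of care is that $\kti$ and $\kte$ were introduced in Section~\ref{subsec:stkbl} for the specific prefix-closed set coming from a maximal tree, so I would note that their defining formulas depend only on the normal-form set $\cc$ and hence apply verbatim in the present setting.
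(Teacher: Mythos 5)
Your proposal is correct and matches the paper's own (very brief) justification: the paper likewise observes that prefix-closure makes each $T_n$ finite and forces $t^i=\kti$ and $t^e=\kte$, so the corollary is just Proposition~\ref{prop:combing} with these substitutions. Your added care about the simple-path hypothesis and about $\kti,\kte$ depending only on the normal-form set $\cc$ is sound but not a departure from the paper's route.
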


The second is the case in which the set of  normal forms
is quasi-geodesic; that is, there are constants
$\lambda,\lambda' \ge 1$ such that every word in this 
set is a   $(\lambda,\lambda')$-quasi-geodesic.
For a group $G$ with generators $A$
and Cayley graph $\ga$,
a word $y \in A^*$ is a {\em $(\lambda,\lambda')$-quasi-geodesic} 
if whenever $y=y_1y_2y_3$, then 
$l(y_2) \le \lambda d_\ga(\ep,y_2)+\lambda'$.
Actually, we only need a slightly weaker property,
that this inequality holds whenever $y_2$ is
a prefix of $y$ (i.e., when $y_1=1$).
In this case, the set $T_n$ is a subset of
the finite set 
of words over $A$ of length at most $\lambda n+\lambda'$.
Then $t^e(n) \le t^i(n) \le \lambda n+\lambda'$ for all $n$.
Putting these results together yields the following.

\begin{corollary}\label{cor:combable}
If a finitely generated group $G$ admits a 
quasi-geodesic language of normal forms
that label simple paths in the Cayley graph and that
satisfy a $K$-fellow traveler property, then $G$ has linear
intrinsic and extrinsic \tffs.
\end{corollary}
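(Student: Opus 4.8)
The plan is to derive the statement directly from Proposition~\ref{prop:combing}. The only hypothesis of that proposition not already granted by the corollary's assumptions is finiteness of each set $T_n=\{w \in A^* \mid d_\ga(\ep,w) \le n,\ w \text{ a prefix of a word in }\cc\}$, so the first step is to check this using the quasi-geodesic condition. Fix quasi-geodesic constants $\lambda,\lambda' \ge 1$; what is actually needed is the weak (prefix-only) form of the inequality, namely $l(y_2) \le \lambda\, d_\ga(\ep,y_2)+\lambda'$ whenever $y_2$ is a \emph{prefix} of a word in $\cc$. Applying this to an arbitrary $w \in T_n$ — which is by definition a prefix of a word of $\cc$ with $d_\ga(\ep,w) \le n$ — gives $l(w) \le \lambda n+\lambda'$. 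Hence $T_n$ is contained in the finite set of words over $A$ of length at most $\lambda n+\lambda'$, so $T_n$ is finite; moreover $t^i(n)=\max\{l(w)\mid w\in T_n\} \le \lambda n+\lambda'$, and since any prefix $v$ of a word $w \in T_n$ satisfies $d_X(\ep,v) \le l(v) \le l(w)$, we also get $t^e(n) \le t^i(n) \le \lambda n+\lambda'$.

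With the $T_n$ finite, Proposition~\ref{prop:combing} applies and yields the intrinsic \tff\ $n \mapsto t^i(\lceil n \rceil+2K+3)+2K+3$ and the extrinsic \tff\ $n \mapsto t^e(\lceil n \rceil+2K+3)+2K+3$. Substituting the bounds from the first step, both of these functions are dominated on $\nn$ by the linear function $n \mapsto \lambda\lceil n \rceil+\bigl((2K+3)\lambda+\lambda'+2K+3\bigr)$. It then remains to invoke the elementary observation (already used in the proof of Theorem~\ref{thm:astktf}) that any nondecreasing function lying above a tame filling function is itself a tame filling function of the same type; this promotes the linear upper bound to a genuine linear intrinsic, respectively extrinsic, \tff\ for $G$, as claimed.

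I do not anticipate a real obstacle here: all of the geometric work — constructing the ``thin'' van Kampen diagrams $\dd_e$, their edge $1$-combings $\tht_e$, and the coarse-distance estimates along $\tht_e$ in terms of $t^i$ and $t^e$ — has been carried out once and for all in Proposition~\ref{prop:combing}. The residual content is exactly the remark that a language whose prefixes are uniformly quasi-geodesic contains only finitely many prefixes of each bounded Cayley-graph radius, plus trivial linear arithmetic. The one point deserving a sentence of care is the distinction between the full substring quasi-geodesic inequality and its prefix-only weakening, since it is the latter (and no more) that the definition of $T_n$ in Proposition~\ref{prop:combing} requires.
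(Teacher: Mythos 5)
Your proposal is correct and follows essentially the same route as the paper: verify finiteness of each $T_n$ from the prefix-only quasi-geodesic inequality, deduce $t^e(n) \le t^i(n) \le \lambda n + \lambda'$, and feed these linear bounds into the \tffs\ produced by Proposition~\ref{prop:combing}. Your extra care in distinguishing the prefix-only weakening of the quasi-geodesic condition and in invoking the domination observation to land on a genuinely linear function matches remarks the paper itself makes.
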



\section{Quasi-isometry invariance for \tffs}\label{sec:qiinv}


In this section we give the proof of Theorem~\rfthmitisqi,
showing that, as with the 
diameter functions~\cite{bridsonriley},~\cite{gersten}, 
\tffs\  are also
quasi-isometry invariants, up to Lipschitz equivalence
of functions (and in the intrinsic case, up to 
sufficiently large set of 
defining relations).  In the extrinsic case,
this follows from Corollary~\ref{cor:etistc}
and the proof of Theorem~\cite[Theorem~A]{hmeiermeastame},
but with a slightly different definition of
coarse distance.  We include the details for
both  here, to illustrate the difference between
the intrinsic and extrinsic cases.

\smallskip

\noindent{\bf Theorem~\rfthmitisqi.}  {\em
Suppose that $(G,\pp)$ and $(H,\pp')$ 
are quasi-isometric groups with
finite presentations.
If $f$ is an extrinsic \tff\ for $G$ over $\pp$,
then $(H,\pp')$
has an extrinsic
\tff\  that is Lipschitz equivalent to $f$.
If $f$ is an intrinsic \tff\ for $G$ over $\pp$,
then after adding
all relators of length up to a sufficiently 
large constant to the 
presentation $\pp'$, 
the pair $(H,\pp')$
has an intrinsic
\tff\  that is Lipschitz equivalent to $f$.
}

\smallskip


\begin{proof}
Write the finite presentations $\pp=\langle A \mid R \rangle$
and $\pp'=\langle B \mid S\rangle$; as usual
we assume that these presentations are symmetric.

If $G$ is a finite group, then $H$ is also finite.
In this case, Proposition~\ref{prop:finite}
shows that there is a constant function $f(n) \equiv C$
that is an intrinsic and extrinsic tame filling function 
for $H$ over $\pp'$.
Since increasing the function preserves
tameness, then for any intrinsic \tff\ $f^i$
for $G$ over $\pp$, the function
$f^i+f$ is also an intrinsic \tff\ for
$H$, and $f^i+f$ is Lipschitz  equivalent to $f^i$.
Similarly, for an extrinsic \tff\ $f^e$ for $G$,
the function $f^e+f$
is Lipschitz equivalent to $f^e$ and is
an extrinsic \tff\ for $H$.

For the remainder of this proof we assume that
the group $G$ (and hence also the group $H$) is infinite.
Let $X$ be the 2-dimensional Cayley complex for the
pair $(G,\pp)$, and let $Y$ be the Cayley complex associated
to $(H,\pp')$.  
Let $d_X$, $d_Y$ be the path metrics in $X$ and $Y$
(and hence also the word metrics in $G$ and $H$ with respect to the 
generating sets $A$ and $B$), respectively.
  
Quasi-isometry of these groups means that there are
functions $\phi:G \ra H$ and $\theta:H \ra G$ 
and a constant $k>1$ such that for all $g_1,g_2 \in G$
and $h_1,h_2 \in H$, we have
\begin{enumerate}
\item $\frac{1}{k}d_X(g_1,g_2)-k \le d_Y(\phi(g_1),\phi(g_2)) 
          \le kd_X(g_1,g_2)+k$
\item $\frac{1}{k}d_Y(h_1,h_2)-k \le d_X(\theta(h_1),\theta(h_2)) 
              \le kd_Y(h_1,h_2)+k$
\item $d_X(g_1,\theta \circ \phi(g_1)) \le k$
\item $d_Y(h_1,\phi \circ \theta(h_1)) \le k$
\end{enumerate}
By possibly increasing the constant $k$, we may also
assume that $k>2$ and that
$\phi(\ep_G)=\ep_H$ and $\theta(\ep_H)=\ep_G$,
where $\ep_G$ and $\ep_H$ are the identity elements of 
the groups $G$ and $H$, respectively.

We extend the functions $\phi$ and $\theta$ to functions
$\tp:G \times A^* \ra B^*$ and $\tth:H \times B^* \ra A^*$
as follows.  
Let $\tilde A \subset A$ be a subset containing 
exactly one element for each inverse pair $a,a^{-1} \in A$. 
Given a pair $(g,a) \in G \times \tilde A$, using property (1) above
we let
$\tp(g,a)$ be (a choice of) a nonempty word 
of length at most $2k$ labeling a 
path in the Cayley graph $Y\sko$ from the vertex $\phi(g)$ to the
vertex $\phi(ga)$ (in the case that $\phi(g)=\phi(ga)$,
we can choose $\tp(g,a)$ to be the nonempty word 
$bb^{-1}$ for some choice of $b \in B$).
We also define $\tp(g,a^{-1}):=\tp(ga^{-1},a)^{-1}$.
Then for any $w = a_1 \cdots a_m$
with each $a_i \in A$, define $\tp(g,w)$ to be the
concatenation
$\tp(g,w) := \tp(g,a_1) \cdots \tp(ga_1 \cdots a_{m-1},a_m)$.
Note that for $w \in A^*$:

(5) the word lengths satisfy
$l(w) \le l(\tp(g,w)) \le 2kl(w)$, and

(6) the word $\tp(\ep_G,w)$ 
represents the element $\phi(w)$ in $H$.

\noindent The function $\tth$ is defined analogously.

Using Proposition~\ref{prop:htpydomain},
we will
prove the theorem 
utilizing {\scf}s rather than {\cfl}s.
For the group $G$ with presentation $\pp$, fix a collection 
${\cld} = \{(\dd_w,\ph_w) \mid w \in A^*, w=_G \ep_G\}$
such that for each $w$, $\dd_w$ is a van Kampen
diagram for $w$ and $\ph_w$ is a \dhy\ of $\dd_w$.
Further, we assume that 
either all of the $\ph_w$ are $f^i$-tame or 
all $\pi_{\dd_w} \circ \ph_w$ are $f^e$-tame, 
where $f^i,f^e:\nn \ra \nn$
are nondecreasing functions.

Recall from Remark~\ref{rmk:infinite}, we know that
 $f^i(n) > n-2$ and $f^e(n) > n-2$
for all $n \in \nn$; we use these inequalities
repeatedly below.
Let $\maxr:=\max\{l(r) \mid r \in R\}$
denote the maximum length of a relator in the presentation $\pp$.

Now suppose that $u'$ is any word in $B^*$
with $u' =_H \ep_H$.  We will construct
a van Kampen diagram for $u'$,
following the method of
\cite[Theorem 9.1]{bridsonriley}.  At each of the four 
successive steps,
we obtain a van Kampen diagram for a specific word; 
we will also keep track of {\oc}s and analyze
their tameness, ending
with a diagram and \dhy\  
for $u'$.

\smallskip

{\em Step I.  For $u:=\tth(\ep_H,u')\in A^*$:} 
Note that (6) implies 
$u=_G \theta(u')=_G\theta(\ep_H)=_G\ep_G$, and
so the collection ${\cld}$ contains a
van Kampen diagram $\dd_u$ for $u$ and an associated
\dhy\  
$\ph_u:C_{l(u)} \times [0,1] \ra \dd_u$.
Note that either $\ph_u$ is $f_1^i:=f^i$-tame
or $\pi_{\dd_u} \circ \ph_u$ is $f_1^e:=f^e$-tame.

\smallskip

{\em Step II.  For $z'':=\tp(\ep_G,u)=\tp(\ep_G,\tth(\ep_H,u')) \in B^*$:}  
We build a finite, planar, contractible, 
combinatorial 2-complex $\Omega$ from
$\dd_u$ as follows.
Given any edge $e$ in $\dd_u$, choose
a direction, and hence a label $a_e$, for $e$,
and let $v_1$ be the initial
vertex of $e$.
Replace $e$ with a directed edge path
$\hat e$ labeled by the (nonempty) word
$\tp(\pi_{\dd_u}(v_1),a_e)$.  Repeating 
this for every edge of the complex $\dd_u$
results in the 2-complex $\Omega$.

Note that 
$\Omega$ is a 
van Kampen diagram for 
the word $z''$ 
with respect to the presentation
$\pp''=\langle B \mid S \cup S'' \rangle$ of $H$,
where $S''$ is the set of all nonempty words over
$B$ of length at most 
$2k \maxr$ that represent $\ep_H$.
Let $Y''$ be the Cayley complex for $\pp''$
and let $\pi_\Omega:\Omega \ra Y''$ 
be the function preserving basepoints and directed
labeled edges.
Using the fact that the only difference between $\dd_u$ and $\Omega$
is a replacement of edges by edge paths, we 
define $\alpha:\dd_u \ra \Omega$ to be the continuous map
taking each vertex and each interior point of
a 2-cell of $\dd_u$ to the same point of $\Omega$,
and taking each edge $e$ to the corresponding edge path $\hat e$.

Writing $u=a_1 \cdots a_m$ with each $a_i \in A$,
then $z''=c_{1,1} \cdots c_{1,j_1} \cdots c_{m,1} \cdots c_{m,j_m}$
where each $c_{i,j} \in B$ and $c_{i,1} \cdots c_{i,j_i}$ is
the nonempty word labeling the edge path $\widehat{e_i}$
of $\bo \Omega$ that is the image under $\alpha$ of the $i$-th edge
of the boundary path of $\dd_u$.
Recall that $C_{l(u)}$ is the circle $S^1$ with a 
1-complex structure of $l(u)$ vertices and edges.
Let the 1-complex $C_{l(z'')}$ be a refinement of the
complex $C_{l(u)}$, so that the $i$-th edge of 
$C_{l(u)}$ is replaced by $j_i \ge 1$ edges for each $i$, and
let $\hat \alpha:C_{l(z'')} \ra C_{l(u)}$ be the identity
on the underlying circle.
Finally, define the map 
$\omega:C_{l(z'')} \times [0,1] \ra \Omega$ by
$\omega:=\alpha \circ \ph_u \circ (\hat \alpha \times \idmap)$.

Next we analyze the intrinsic tameness of $\omega$.  
In this
step we have only replaced edges by nonempty edge
paths of length at most $2k$, and hence
for each vertex $v$ in 
$\dd_u$ we have
$
\td_{\dd_u}(*,v) 
\le \td_{\Omega}(*,\alpha(v)) 
\le 2k\td_{\dd_u}(*,v)~.
$
For a point $q$ in the interior of an edge
of $\dd_u$, let $v$ be a vertex
in the same closed cell; then 
$|\td_{\dd_u}(*,q)-\td_{\dd_u}(*,v)| <1$
and $|\td_{\Omega}(*,\alpha(q))-\td_{\Omega}(*,\alpha(v))| < 2k$.
For a point $q$ in the interior of a 2-cell of $\dd_u$,
let $v$ be a vertex in the closure of this cell with 
$\td_{\dd_u}(*,v) \le \td_{\dd_u}(*,q)+1$.  Then $\alpha(v)$
is a vertex in the closure of the open 2-cell of $\Omega$
containing $\alpha(q)$, and the boundary path of this
cell has length at most $2k\maxr$.
That is, 
$|\td_{\dd_u}(*,q)-\td_{\dd_u}(*,v)| <1$
and $|\td_{\Omega}(*,\alpha(q))-\td_{\Omega}(*,\alpha(v))| < 2k\maxr$.
Thus for all $q\in \dd_u$, we have $\td_{\dd_u}(*,q) 
\le \td_{\Omega}(*,\alpha(q)) \le 2k \td_{\dd_u}(*,q) +4k+2k\maxr$.

Now suppose that $p$ is any point in $C_{l(z'')}$ and 
$0 \le s < t \le 1$.  In
the case that $\ph_u$ is
$f_1^i$-tame, combining the inequalities above and the fact
that $f_1^i$ is nondecreasing yields
\begin{eqnarray*}
\td_\Omega(*,\omega(p,s)) &=& \td_\Omega(*,\alpha(\ph_u(\hat \alpha(p),s))
         \le 2k \td_{\dd_u}(*,\ph_u(\hat \alpha(p),s)) +4k+2k\maxr\\
  & < & 2k (f_1^i(\td_{\dd_u}(*,\ph_u(\hat \alpha(p),t)))+2)+4k+2k\maxr \\
  &\le & 2k f_1^i(\td_{\Omega}(*,\alpha(\ph_u(\hat \alpha(p),t))))+8k+2k\maxr
      = 2k f_1^i(\td_{\Omega}(*,\omega(p,t))+8k+2k\maxr~.
\end{eqnarray*}
Hence $\omega$ is $f_2^i$-tame for the nondecreasing function 
$f_2^i(n):=2k f_1^i(n)+8k+2k\maxr$.

Next consider the extrinsic tameness
of $\omega$.  
For any vertex $v$ in $\dd_u$,  let $w_v$ 
be a word labeling
a path in $\dd_u$ from $*$ to $v$.  
Using note (6) above,
we have 
$\phi(\pi_{\dd_u}(v)) =_H 
\phi(w_v) =_H \tp(\ep_G,w_v)=_H
\pi_\Omega(\alpha(v))$, by our
construction of $\Omega$. 
Quasi-isometry property (1) then gives
\[
\frac{1}{k}d_X(\ep_G,\pi_{\dd_u}(v))-k 
  \le d_Y(\ep_H,\phi(\pi_{\dd_u}(v))) 
    = d_Y(\ep_H,\pi_\Omega(\alpha(v)))
  \le kd_X(\ep_G,\pi_{\dd_u}(v))+k~.
\]
Since the generating sets of the presentations $\pp'$ and
$\pp''$ of $H$ are the same, the Cayley graphs 
and their path metrics $d_Y=d_{Y''}$ are also the same.  
As in the intrinsic case above,
for a point $q$ in the interior of an edge or 2-cell of $\dd_u$,
there is a vertex $v$ in the same closed cell with
$|\td_X(\ep_G,\pi_{\dd_u}(q))-\td_X(\ep_G,\pi_{\dd_u}(v))|<1$ and
$|\td_{Y''}(\ep_H,\pi_\Omega(\alpha(q)))-
         \td_{Y''}(\ep_H,\pi_\Omega(\alpha(v)))|<2k(\maxr+1)$.
Then for all $q \in \dd_u$, we have
\begin{eqnarray*}
\td_X(\ep_G,\pi_{\dd_u}(q))
&\le&  k\td_{Y''}(\ep_H,\pi_\Omega(\alpha(q)))+2k^2\maxr+3k^2+1~,
\text{ and }\\
\td_{Y''}(\ep_H,\pi_\Omega(\alpha(q)))
&\le&  k\td_X(\ep_G,\pi_{\dd_u}(q))+4k+2k\maxr~.
\end{eqnarray*}

Now suppose that $p$ is any point in $C_{l(z'')}$ and 
$0 \le s < t \le 1$. In the case that $\pi_{\dd_u} \circ \ph_u$
is $f_1^e$-tame, then
\begin{eqnarray*}
\td_{Y''}(\ep_H,\pi_\Omega(\omega(p,s))) 
   &=& \td_{Y''}(\ep_H,\pi_\Omega(\alpha(\ph_u(\hat \alpha(p),s)))) \\
     & \le&  k \td_{X}(\ep_G,\pi_{\dd_u}(\ph_u(\hat \alpha(p),s))) 
                +4k+2k\maxr \\
  & < & k (f_1^e(\td_{X}(\ep_G,\pi_{\dd_u}(\ph_u(\hat \alpha(p),t))))+2)
                  +4k+2k\maxr\\
 &\le & k f_1^e(k\td_{Y''}(\ep_H,\pi_\Omega(\alpha(\ph_u(\hat \alpha(p),t))))
                     +2k^2\maxr+3k^2+1)+6k+2k\maxr\\
       &=& k f_1^e(k\td_{Y''}(\ep_H,\pi_\Omega(\omega(p,t))+2k^2\maxr+3k^2+1)
              +6k+2k\maxr~.
\end{eqnarray*}
Hence $\pi_\omega \circ \omega$ is $f_2^e$-tame for the 
function 
$f_2^e(n):=k f_1^e(kn+2k^2\maxr+3k^2+1)+6k+2k\maxr$.

\smallskip

{\em Step III. For $u'$ over $\pp'''$:}  In this step we
construct another finite, planar, contractible,
and combinatorial 2-complex $\Lambda_{u'}$ starting from $\Omega$,
by adding a ``collar'' around the outside boundary.
Write the word $u'=b_1 \cdots b_n$ with each $b_i \in B$.
For each $1 \le i \le n-1$, let $w_i$ be
a word labeling a geodesic edge path in
$Y$ from $\phi(\theta(b_1 \cdots b_i))$
to $b_1 \cdots b_i$;
the quasi-isometry inequality in (3) above
implies that the length of $w_i$ is at most $k$. 
We add to $\Lambda_{u'}$ a vertex $x_i$ and the vertices
and edges of a directed edge path $p_i$
labeled by $w_i$
from the vertex ${v_i}$ to $x_i$, 
where 
$v_i:=\term(\path_{\Omega}(1,\tp(\ep_G,\tth(e_H,b_1 \cdots b_i))))$ 
Note that if $w_i$ is the empty word, we identify
$x_i$ with the vertex ${v_i}$; the path $p_i$
is a constant path at this vertex.
Then $*={v_0}=x_0=x_n$
(and $p_0$ and $p_n$ are the constant path at
this vertex); let this vertex
be the basepoint of $\Lambda_{u'}$.

Next we add to $\Lambda_{u'}$ a directed edge 
$\check e_i$ labeled
by $b_i$ from the vertex $x_{i-1}$ to the vertex $x_i$.
The path $q_i$ from $v_{i-1}$ to $v_i$
along the boundary of 
the subcomplex $\Omega$ is labeled by the nonempty word
$z_i:=\tp(\theta(b_1 \cdots b_{i-1}),\tth(b_1 \cdots b_{i-1},b_i))$.
If both of the paths $p_{i-1},p_i$  are constant
and the label of path $q_i$ is the single letter $b_i$,
then we identify the edge $\check e_i$ with the path $q_i$.
Otherwise, we 
attach a 2-cell $\hat \sigma_i$ along the edge circuit
following the edge path starting at $v_{i-1}$ 
that traverses the path
$q_i$,
the path $p_i$, the reverse of the edge $\check e_i$, 
and finally the reverse of
the path $p_{i-1}$. 
\begin{figure}
\begin{center}
\includegraphics[width=3.4in,height=1.4in]{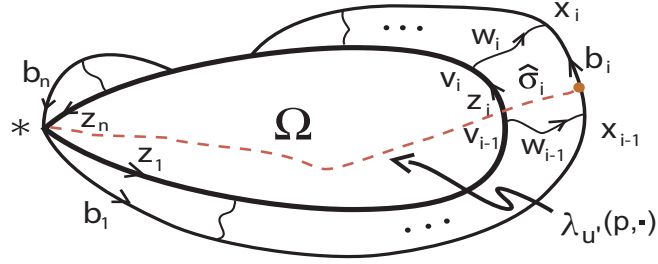}
\caption{Van Kampen diagram $\Lambda_{u'}$ and \dhy\ 
$\lambda_{u'}$}\label{fig:qicollar}
\end{center}
\end{figure}
See Figure~\ref{fig:qicollar} for a picture of the resulting
diagram.

Now the complex $\Lambda_{u'}$ is a van Kampen
diagram for the original word $u'$,
with respect to the presentation
$\pp'''=\langle B \mid S \cup S''' \rangle$ of $H$,
where $S'''$ is the set of all nonempty words in
$B^*$ of length at most 
$\maxr''':=2k\maxr+(2k)^2+2k+1$ that
represent $\ep_H$.
Let 
$Y'''$ be the corresponding Cayley complex.

We define a \dhy\  
$\lambda_{u'}:C_{l(u')} \times [0,1] \ra \Lambda_{u'}$
by extending the map
$\omega$ on the subcomplex $\Omega$ (from Step II)
as follows.
First we let the cell complex $C_{l(u')}$ be the complex
$C_{l(z'')}$ with each subpath in $C_{l(z'')}$ mapping
to a path $q_i$ in $\bo \Omega$ replaced by a single edge.
From our definitions of $\tp$ and $\tth$, each $q_i$ path
is labeled by a nonempty word, and so
$C_{l(z'')}$ is a refinement of the cell complex structure
$C_{l(u')}$ on $S^1$, and we let 
$\hat \beta:C_{l(u')} \ra C_{l(z'')}$ be the identity
on the underlying circle.
Next define a homotopy 
$\tilde \lambda:C_{l(z'')} \times [0,1] \ra \Lambda_{u'}$
as follows.
For each $1 \le i \le n$, let $\tilde v_i$ be the point in $S^1$
with $\omega(\tilde v_i,1) = v_i$.
Define $\tilde \lambda(\tilde v_i,t):=\omega(\tilde v_i,2t)$
for $t \in [0,\frac{1}{2}]$, and let $\tilde \lambda(\tilde v_i,t)$
for $t \in [\frac{1}{2},1]$ be a constant speed path
along $p_i$ from $v_i$ to $x_i$.
On the interior of the edge $\tilde e_i$ from $\tilde v_{i-1}$
to $\tilde v_i$, 
define
the homotopy $\tilde \lambda|_{\tilde e_i \times [0,\frac{1}{2}]}$
to follow $\omega|_{\tilde e_i \times [0,1]}$ at double speed,
and let $\tilde \lambda|_{\tilde e_i \times [\frac{1}{2},1]}$
go through the 2-cell $\hat \sigma_i$ (or, if there is 
no such cell, let this portion of $\tilde \lambda$ be constant)
from $q_i$ to $\check e_i$.
Finally, we define the \dhy\ 
$\lambda_{u'}:C_{l(u'))} \times [0,1] \ra \Lambda_{u'}$ by 
$\lambda_{u'}:=\tilde \lambda \circ (\hat \beta \times \idmap)$.
(A path $\lambda_{u'}(p,\cdot)$ is illustrated by
the dashed path in Figure~\ref{fig:qicollar}).
This map $\lambda_{u'}$ is a 
\dhy\ for the diagram $\Lambda_{u'}$.


Next we analyze the intrinsic tameness of $\lambda_{u'}$.
Since $\Omega$ is a subdiagram of $\Lambda_{u'}$, 
for any vertex $v$ in $\Omega$, we have 
$d_{\Lambda_{u'}}(*,v) \le d_\Omega(*,v)$.  
Given any edge path $\beta$ in $\Lambda_{u'}$
from $*$ to $v$ that is not completely contained
in the subdiagram $\Omega$, the subpaths of
$\beta$ lying in the ``collar'' can be replaced
by paths along $\bo \Omega$ of length at most
a factor of $4k^2$ longer.  Then
$d_\Omega(*,v) \le 4k^2d_{\Lambda_{u'}}(*,v)$. 
Hence for any point $q \in \Omega$, we have
$\td_{\Lambda_{u'}}(*,q) \le \td_\Omega(*,q) 
\le 4k^2\td_{\Lambda_{u'}}(*,q)+4k^2+1+\maxr'''$.

Now suppose that $p$ is any point of $C_{l(u')}$ and
$0 \le s < t \le 1$, and that $\ph_u$ is $f^i$-tame.
Since $n < f^i(n)+2$ for all $n \in \nn$ from Remark~\ref{rmk:infinite},
from the definition of $f_2$ we also have $n<f_2(n)$ for all $n$.
If $t \le \frac{1}{2}$, then the path 
$\lambda_{u'}(p,\cdot)$ on $[0,t]$ is a reparametrization of
$\omega(p,\cdot)$, and so Step II, the 
fact that $f_2^i$ is nondecreasing,  and the inequalities above
give 
\begin{eqnarray*}
\td_{\Lambda_{u'}}(*,\lambda_{u'}(p,s)) 
&\le& \td_{\Omega}(*,\lambda_{u'}(p,s))\\
&<& f_2^i(\td_{\Omega}(*,\lambda_{u'}(p,t)))\\
&\le& f_2^i(4k^2\td_{\Lambda_{u'}}(*,\lambda_{u'}(p,t))+4k^2+1+\maxr''').
\end{eqnarray*}
If $t > \frac{1}{2}$ and $s \le \frac{1}{2}$, then we have 
$ \td_{\Lambda_{u'}}(*,\lambda_{u'}(p,s)) < 
f_2^i(4k^2\td_{\Lambda_{u'}}(*,\lambda_{u'}(p,\frac{1}{2}))+4k^2+1+\maxr''')$
and
$|\td_{\Lambda_{u'}}(*,\lambda_{u'}(p,t))-
     \td_{\Lambda_{u'}}(*,\lambda_{u'}(p,\frac{1}{2}))|<\maxr'''+1$, so 
$$ \td_{\Lambda_{u'}}(*,\lambda_{u'}(p,s)) < 
f_2^i(4k^2(\td_{\Lambda_{u'}}(*,\lambda_{u'}(p,t))+\maxr'''+1)+
          4k^2+1+\maxr''').$$
If $s>\frac{1}{2}$, then 
\begin{eqnarray*}
\td_{\Lambda_{u'}}(*,\lambda_{u'}(p,s)) &\le& 
\td_{\Lambda_{u'}}(*,\lambda_{u'}(p,t)) + \maxr'''+1
< f_2^i(\td_{\Lambda_{u'}}(*,\lambda_{u'}(p,t)))+\maxr'''+1.
\end{eqnarray*} 
Then $\lambda_{u'}$ is $f_3^i$-tame for the function
$f_3^i(n):=f_2^i(4k^2n+8k^2+1+(4k^2+1)\maxr''')+\maxr'''+1$.

We note that we have now
completed the proof of Theorem~\rfthmitisqi\  in
the intrinsic case:  
The \dhs\ of the \scf\ 
$\{(\Lambda_{u'},\lambda_{u'}) \mid u' \in B^*, u'=_H\ep_H\}$
(over the presentation $\pp'''=\langle B \mid S \cup S''' \rangle$)
are $f_3^i$-tame, and so
Proposition~\ref{prop:htpydomain} says that 
$(H,\pp''')$ has an intrinsic \tff\ 
Lipschitz equivalent to $f_3^i$, 
and hence also to $f^i$.

The analysis of the extrinsic tameness in 
this step is simplified by the fact that 
for all $q \in \Omega$, we have
$\td_{Y''}(\ep_H,\pi_\Omega(q))=
   \td_{Y'''}(\ep_H,\pi_{\Lambda_{u'}}(q))$, 
since the 1-skeleta of $Y''$ and $Y'''$ are
determined by the generating sets of the 
presentations $\pp''$ and $\pp'''$, which are
the same.  A similar argument to those
above shows that if $\pi_{\dd_u} \circ \ph_u$ 
is $f^e$-tame, then $\pi_{\dd_{u'}} \circ \lambda_{u'}$ 
is $f_3^e$-tame for the function
$f_3^e(n):=f_2^e(n+\maxr'''+1)+\maxr+1$.

{\em Step IV.  For $u'$ over $\pp'$:}  
Finally, we turn to building a van Kampen
diagram $\dd_{u'}'$ for $u'$ over the original
presentation $\pp'$.
For each nonempty word $w$ over $B$ of length at
most $\maxr'''$
satisfying $w=_H \ep_H$, let $\dd_w'$ be a
fixed choice of van Kampen diagram for $w$
with respect to the presentation $\pp'$ of $H$, and 
let ${\mathcal F}$ be the (finite) collection of these
diagrams.
A diagram $\dd_{u'}'$ over the presentation $\pp'$ is built by
replacing 2-cells of $\Lambda_{u'}$,
proceeding through the 2-cells of $\Lambda_{u'}$
one at a time.  Let $\tau$ be a 
2-cell of $\Lambda_{u'}$, and 
let $*_\tau$ be a choice of basepoint vertex
in $\bo \tau$.
Let $x$ be the word labeling
the path $\bo \tau$ starting at $*_\tau$ and 
reading counterclockwise.  Since
$l(x) \le L$, there is an associated van Kampen diagram
$\dd_\tau'=\dd_{x}'$ in the collection ${\mathcal F}$.
Note that although $\Lambda_{u'}$
is a combinatorial 2-complex, and so the cell $\tau$
is a polygon, the boundary label $x$
may not be freely or cyclically reduced.  The 
van Kampen diagram $\dd_x'$ may not be a polygon, but
instead a collection of polygons connected by edge
paths, and possibly with edge path ``tendrils''.
We replace the 2-cell $\tau$ with a copy $\dd_\tau'$
of the van Kampen
diagram $\dd_x'$, identifying the boundary
edge labels as needed, obtaining another
planar diagram.  Repeating this
for each 2-cell of of the resulting complex at
each step, results in the
van Kampen diagram $\dd_{u'}'$ for $u'$ with
respect to $\pp'$.  

From the process of constructing $\dd_{u'}'$
from $\Lambda$, for each 2-cell $\tau$ there
is a continuous onto map $\tau \ra \dd_\tau'$
preserving the boundary edge path labeling,
and so there is an induced continuous surjection
$\gamma: \Lambda_{u'} \ra \dd_{u'}'$.  Note that the
boundary edge paths of $\Lambda_{u'}$ and $\dd_{u'}'$
are the same.  
Then the composition
$\ph_{u'}':=\gamma \circ \lambda_{u'}:C_{l(u')} \times [0,1] 
\ra \dd_{u'}'$ is a \dhy.

To analyze the extrinsic tameness, we first note
that for all points $\hat q \in \Lambda_{u'}\sko$, 
the images $\pi_{\Lambda_{u'}}(\hat q)$ in $Y'''$ and 
 $\pi_{\dd_{u'}'}(\gamma(\hat q))$ in $Y$ are
the same point in the 1-skeleta $Y\sko=(Y''')\sko$,
and so $\td_{Y'''}(\ep_H,\pi_{\Lambda_{u'}}(\hat q))=
\td_Y(\pi_{\dd_{u'}'}(\gamma(\hat q)))$.
Let $M:=2\max\{\td_\dd(*,r) \mid \dd \in {\mathcal F}, 
r \in \dd\}$.

Suppose that $p$ is any point in $C_{l(u')}$ and
$0 \le s<t \le 1$, and that $\pi_{\dd_u} \circ \ph_u$
is $f^e$-tame.  
If $\lambda_{u'}(p,s) \in \Lambda_{u'}\sko$,
then define $s':=s$; otherwise, let $0 \le s'<s$ satisfy
$\lambda_{u'}(p,s') \in \Lambda_{u'}\sko$ and 
$\lambda_{u'}(p,(s',s])$ is a subset of a single open
2-cell of $\Lambda_{u'}$.  Similarly, if 
$\lambda_{u'}(p,t) \in \Lambda_{u'}\sko$,
then define $t':=t$, and otherwise, let $t < t' \le 1$ satisfy
$\lambda_{u'}(p,t') \in \Lambda_{u'}\sko$ and 
$\lambda_{u'}(p,[t,t'))$ is a subset of a single open
2-cell of $\Lambda_{u'}$.
From Remark~\ref{rmk:infinite} and the
choice of $f_3^e$, we also have $n < f_3^e(n)$ for all $n \in \nn$. 
Then 
\begin{eqnarray*}
\td_Y(\ep_H,\pi_{\dd_{u'}'}(\ph_{u'}(p,s)))
& = &  \td_{Y}(\ep_H,\pi_{\dd_{u'}'}(\gamma(\lambda_{u'}(p,s)))) \\
&\le&  \td_{Y}(\ep_H,\pi_{\dd_{u'}'}(\gamma(\lambda_{u'}(p,s')))) +M \\
&= &   \td_{Y'''}(\ep_H,\pi_{\Lambda_{u'}}(\lambda_{u'}(p,s'))) +M \\
&<&  f_3^e(\td_{Y'''}(\ep_H,\pi_{\Lambda_{u'}}(\lambda_{u'}(p,t')))) +M \\
&=&    f_3^e(\td_{Y}(\ep_H,\pi_{\dd_{u'}'}(\gamma(\lambda_{u'}(p,t'))))) +M \\
&\le&  f_3^e(\td_{Y}(\ep_H,\pi_{\dd_{u'}'}(\gamma(\lambda_{u'}(p,t))))+M) +M.
\end{eqnarray*}
Therefore
$\pi_{\dd_{u'}'} \circ \ph_{u'}'$ is 
$f_4^e$-tame, for the
function $f_4^e(n):=f_3^e(n+M)+M$.
Since the functions $f_j^e$ and $f_{j+1}^e$ are 
Lipschitz equivalent for all $j$, then $f_4^e$ is
Lipschitz equivalent to~$f^e$.  

Now the collection
$\{(\dd_{u'}',\ph_{u'}') \mid u' \in B^*, u'=_H\ep_H\}$
is a \scf\ 
for the pair $(H,\pp')$ such that each 
$\pi_{\dd_{u'}'} \circ \ph_{u'}'$ is tame
with respect to a function that is Lipschitz equivalent to $f^e$,
and Proposition~\ref{prop:htpydomain} completes the proof.
\end{proof}

The obstruction to applying Step IV of the above
proof in the intrinsic case stems from the fact that
the map 
$\gamma:\Lambda_{u'} \ra \dd_{u'}'$ behaves well with
respect to extrinsic coarse distance, but 
may not behave
well with respect to intrinsic coarse distance.
The latter results because
the replacement of a 2-cell $\tau$ of $\Lambda_{u'}$
with a van Kampen diagram $\dd_\tau'$ can result in
the identification of vertices of $\Lambda_{u'}$.  




\end{document}